\date{\today}
\newcommand{\Z}{{\mathbb Z}}
\newcommand{\R}{{\mathbb R}}
\newcommand{\C}{{\mathbb C}}
\newcommand{\E}{{\mathbb E}}
\newcommand{\PP}{{\mathbb P}}
\newtheorem{theorem}{Theorem} [section]
\newtheorem{remark}[theorem]{Remark}
\newtheorem{lemma}[theorem]{Lemma}
\newtheorem{prop}[theorem]{Proposition}
\newtheorem{coro}[theorem]{Corollary}
\newtheorem{definition}[theorem]{Definition}
\begin{document}

\title{A Short Course on One-Dimensional Random Schr\"odinger Operators}

\author[D.\ Damanik]{David Damanik}

\address{Department of Mathematics, Rice University, Houston, TX~77005, USA}

\email {\href{mailto:damanik@rice.edu}{damanik@rice.edu}}

\urladdr {\href{http://www.ruf.rice.edu/~dtd3}{www.ruf.rice.edu/$\sim$dtd3}}

\thanks{D.\ D.\ was supported in part by NSF grant DMS--0800100.}

\begin{abstract}
We discuss various approaches to localization results for one-dimensional random Schr\"odinger operators, both discrete and continuum. We focus in particular on the approach based on F\"urstenberg's Theorem and the Kunz-Souillard method. These notes are based on a series of five one-hour lectures given at University College London in June/July 2011.
\end{abstract}

\maketitle

\tableofcontents

\section{Introduction}

\subsection{Overview}

This short course will discuss Schr\"odinger operators with random potentials, both in the discrete and in the continuum setting. Thus, we will consider operators
$$
[H_\omega \psi](n) = [ (\Delta + V_\omega) \psi](n) = \sum_{|m - n|_1 = 1} \psi(m) + V_\omega(n) \psi(n)
$$
in $\ell^2(\Z^d)$ and operators
$$
[H_\omega \psi](x) = [ (-\Delta + V_\omega) \psi](x) = - \sum_{j = 1}^d \partial_j^2 \psi (x) + V_\omega(x) \psi(x)
$$
in $L^2(\R^d)$.\footnote{There is a tradition to consider $-\Delta$ in the continuum case and $\Delta$ in the discrete case. In fact, the definition of the discrete Laplacian used here deviates from what a standard discretization procedure would yield in that it drops a multiple of the identity, resulting in a spectrum that is symmetric about zero. These choices are standard and do not affect the mathematical content of the results obtained, but merely their formulation.} Here, $V_\omega$ is a random potential to be specified below. Operator domains are chosen appropriately to ensure self-adjointness. The goal is then to identify the typical spectral properties of such operators, that is, spectral properties that hold with probability one.

Let us first describe the global expected picture. In dimensions $d = 1$ or $2$, it is expected that the associated quantum system is localized. On a spectral level, this manifests itself in so-called \emph{spectral localization}, that is, the operator $H_\omega$ has a complete set of eigenvectors that decay exponentially in space. On a quantum dynamical level, one expects \emph{dynamical localization}, that is, for initially localized $\psi$, $e^{-itH_\omega} \psi$ remains uniformly localized in space for all times. The dependence on $\omega$ of such statements may be controlled in some non-trivial way. In dimensions $d \ge 3$, on the other hand, localization phenomena are expected to depend on the strength of the randomness and the energy regime. More explicitly, spectral measures are expected to be pure point near the boundary of the spectrum (with associated exponential decay of the corresponding eigenvectors) and dynamical localization is expected to hold for initial states whose spectral measures are supported in a suitable neighborhood of the boundary of the spectrum. The size of this localized neighborhood of the boundary of the spectrum is expected to depend on the strength of the randomness, and in fact \emph{delocalization} takes place outside a suitable neighborhood of the boundary of the spectrum. Here, delocalization refers to continuous spectral measures and spreading of $e^{-itH_\omega} \psi$ in space as $|t| \to \infty$.

The localization side of the expected global picture is quite well understood. The one glaring gap in our understanding is that localization at all energies is still an open problem in the case $d = 2$. On the other hand, it is well understood that there is localization at all energies in the case $d = 1$ and that there is localization near the boundary of the spectrum in the case $d \ge 2$.\footnote{An important exception is the Bernoulli single-site distribution in the \emph{discrete} case.} Moreover, for each of these statements, there are multiple proofs, each of which sheds a different light on the localization phenomenon and works under different assumptions on the model. In that sense, there is no single ``best'' localization proof.

The cases $d = 1$ and $d \ge 2$ are quite different. For the purpose of a short course, one needs to focus on one case and hence the paths need to diverge at this point. We have decided to focus on the case $d = 1$ in this short course. One of the reasons is that there exist graduate-level expositions of localization proofs in the case $d \ge 2$ similar in style and scope to the one offered here \cite{kirsch, klein, stolz}, but not in the case $d = 1$.

\subsection{Models and Results}

Let us now describe the models we consider precisely. As explained above, we will restrict our attention to the one-dimensional scenario and hence consider operators
\begin{equation}\label{e.operd}
[H_\omega \psi](n) = \psi(n+1) + \psi(n-1) + V_\omega(n) \psi(n)
\end{equation}
in $\ell^2(\Z)$ and operators
\begin{equation}\label{e.operc}
[H_\omega \psi](x) = - \psi'' (x) + V_\omega(x) \psi(x)
\end{equation}
in $L^2(\R)$. The potential $V_\omega$ will depend on a parameter $\omega$, chosen from a probability space $(\Omega,\mu)$.

Before giving a formal definition of $\Omega$, $\mu$, and $V_\omega$ for $\omega \in \Omega$, let us informally discuss what the model is supposed to describe. The type of randomness we want to capture is that of a sequence of independent identically distributed (``i.i.d.'') random variables. That is, in the discrete setting for example, the potential value at each site, $V_\omega(n)$, is supposed to be drawn from some probability distribution on $\R$ (since we want our potentials to be real-values to ensure self-adjointness of $H_\omega$), this probability distribution is the same for all sites, and the values are drawn independently from this distribution. Let us call this common probability distribution $\nu$. Clearly, the resulting potentials will be bounded if and only if the topological support of $\nu$ is compact. All interesting phenomena related to random potentials already occur in this special case, so we will restrict our attention to it. Once $\nu$ is chosen, it is then clear how to formally define the $V_\omega$'s:

\begin{definition}[Discrete Case]\label{d.potd}
Given a probability measure $\nu$ on $\R$ with $\mathrm{supp} \, \nu$ compact, let $\Omega = (\mathrm{supp} \, \nu)^\Z$,  $\mu = \nu^\Z$, and $V_\omega(n) = \omega_n$ for $\omega \in \Omega$.
\end{definition}

By compactness of $\mathrm{supp} \, \nu$, the potentials $V_\omega$ are (real-valued and) bounded, and hence the operators $H_\omega$ in \eqref{e.operd} with domain given by $\ell^2(\Z)$ are bounded and self-adjoint.

\begin{remark}
Let us note for later use the following regarding Definition~\ref{d.potd}. Since $\mathrm{supp} \, \nu$ is assumed to be compact, $\Omega$ is a compact metric space. Moreover, with the homeomorphism $T : \Omega \to \Omega$, $(T\omega)_n = \omega_{n+1}$ and the continuous sampling function $f : \Omega \to \R$, $f(\omega) = \omega_0$, we can write $V_\omega(n) = f(T^n \omega)$.
\end{remark}

In the continuum case, we follow a similar path and construct the potentials $V_\omega$ from a sequence of i.i.d.\ random variables. Each of them will play the role of a coupling constant, signifying the local strength of a common single-site potential. To ensure independence on the level of the potential, one assumes that the support of the single-site potential is contained in a sufficiently small interval. Typically, one wants non-overlap. Thus, we are led to the following definition of the potentials in \eqref{e.operc}:

\begin{definition}[Continuum Case]\label{d.potc}
Given a probability measure $\nu$ on $\R$ with $\mathrm{supp} \, \nu$ compact and $v \in L^1(0,1)$ real-valued and non-zero {\rm (}in $L^1(0,1)${\rm )}, let $\Omega = (\mathrm{supp} \, \nu)^\Z$,  $\mu = \nu^\Z$, and
$$
V_\omega(x) = \sum_{n \in \Z} \omega_n v(x-n)
$$
for $\omega \in \Omega$.
\end{definition}

That is, the potential $V_\omega$ is given on the interval $(n,n+1)$ by the coupling constant $\omega_n$ times the single-site potential $v$ shifted from $(0,1)$ to $(n,n+1)$. Consequently, $V_\omega$ is uniformly $L^1_\mathrm{loc}$ and hence $H_\omega$ as defined in \eqref{e.operc} is self-adjoint on a suitable domain by form methods or Sturm-Liouville theory.

Let us now state the main results for these operators. We begin with spectral localization in the discrete case.

\begin{theorem}[Spectral Localization - Discrete Case]\label{t.speclocd}
Suppose the potentials $\{ V_\omega \}$ are as defined in Definition~\ref{d.potd} and the operators $\{ H_\omega \}$ are given by \eqref{e.operd}. Assume in addition that $\mathrm{supp} \, \nu$ contains at least two points. Then, for $\mu$-almost every $\omega \in \Omega$, the operator $H_\omega$ has pure point spectrum with exponentially decaying eigenvectors.
\end{theorem}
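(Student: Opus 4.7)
The plan is to argue in three stages: first establish positivity of the Lyapunov exponent at every energy via Furstenberg's theorem, then use Oseledets' theorem to obtain exponentially decaying solutions at Lebesgue-a.e.\ energy, and finally upgrade this to pure point spectrum by a spectral-averaging / Simon--Wolff argument.

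I would begin by writing the formal eigenvalue equation $H_\omega \psi = E\psi$ as a first-order recursion. Setting $\Phi(n) = \bigl(\psi(n+1),\psi(n)\bigr)^\top$ and
$$
A^E(v) = \begin{pmatrix} E - v & -1 \\ 1 & 0 \end{pmatrix} \in SL(2,\R),
$$
one has $\Phi(n) = A^E(\omega_n)\Phi(n-1)$. Since the $\omega_n$ are i.i.d.\ with common law $\nu$, the products $M_n^E(\omega) := A^E(\omega_n)\cdots A^E(\omega_1)$ form an i.i.d.\ random walk in $SL(2,\R)$ with common distribution $\nu_E$, the pushforward of $\nu$ under $v \mapsto A^E(v)$. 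Furstenberg's theorem then guarantees a strictly positive Lyapunov exponent $\gamma(E) := \lim_n \frac{1}{n} \E \log \|M_n^E\| > 0$ as soon as the closed subgroup of $SL(2,\R)$ generated by $\mathrm{supp}\,\nu_E$ is non-compact and acts strongly irreducibly on $\R^2$. Both hypotheses follow from the assumption that $\mathrm{supp}\,\nu$ contains two distinct values $a \neq b$: the element $A^E(a)\bigl(A^E(b)\bigr)^{-1}$ is a non-trivial unipotent (forcing non-compactness), and strong irreducibility reduces to checking that no finite union of lines in $\R^2$ is invariant under both $A^E(a)$ and $A^E(b)$, which is a short calculation.

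With $\gamma(E) > 0$ in hand for every $E$, Oseledets' theorem produces, for each fixed $E$ and $\mu$-a.e.\ $\omega$, one-dimensional subspaces of solutions to $H_\omega \psi = E \psi$ decaying like $e^{-\gamma(E) n}$ as $n \to +\infty$ and similarly as $n \to -\infty$. A Fubini swap then yields the stronger statement: for $\mu$-a.e.\ $\omega$, and for Lebesgue-a.e.\ $E$, there exist solutions of the eigenvalue equation decaying exponentially at $\pm\infty$.

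The main obstacle is passing from this "Lebesgue-a.e.\ energy" statement about generalized eigenfunctions to the "spectral-measure-a.e." statement needed for pure point spectrum; a generalized eigenfunction decaying at both ends need not lie in $\ell^2$, and absence of absolutely continuous spectrum (from Ishii--Pastur) still leaves singular continuous spectrum in principle. For this I would invoke the Simon--Wolff criterion. Fix a site $n_0$ and regard $H_\omega$ as a rank-one perturbation in the coupling $\omega_{n_0}$; the criterion says that for $\nu$-a.e.\ choice of $\omega_{n_0}$ the spectral measure associated to $\delta_{n_0}$ is pure point on an interval $I$ iff $\|(H_\omega - E - i0)^{-1}\delta_{n_0}\|^2 < \infty$ for Lebesgue-a.e.\ $E \in I$. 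Writing the Green's function via the left and right Weyl solutions from the previous paragraph shows that positivity of $\gamma$ forces precisely this finiteness for Lebesgue-a.e.\ $E$. A Fubini argument in $\omega_{n_0}$, combined with shift-ergodicity of $(\Omega,\mu,T)$ to extend the conclusion simultaneously to every $n_0 \in \Z$, then gives pure point spectrum for $\mu$-a.e.\ $\omega$, with exponential decay of eigenvectors at the rate $\gamma$ inherited from Oseledets. The delicate part is coordinating the various null sets in $E$, $\omega_{n_0}$, and $n_0$; this is where Fubini, ergodicity, and Simon--Wolff have to be combined with care.
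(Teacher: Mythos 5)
Your overall architecture --- Furstenberg's theorem for $\gamma(E)>0$ at every energy, Oseledec--Ruelle plus a Fubini swap to get exponentially decaying solutions for Lebesgue-a.e.\ $E$ for a.e.\ $\omega$, and a rank-one spectral-averaging argument to convert the Lebesgue-a.e.-in-$E$ statement into a statement about spectral measures --- is exactly the route the paper takes, down to the computation of the unipotent element $A^E(a)\bigl(A^E(b)\bigr)^{-1}$ used to verify the Furstenberg hypotheses. The paper differs only in mechanics at the last stage: instead of Simon--Wolff at a single site it averages the rank-one couplings at the two sites $0$ and $1$ (since $\delta_{n_0}$ alone is not cyclic, while the pair $\{\delta_0,\delta_1\}$ is) and then uses subordinacy theory together with shift-invariance of the localization event to boost a positive-probability conclusion to full probability; your plan of running the criterion at every $n_0$ and intersecting the countably many full-measure sets would accomplish the same thing.

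The genuine gap is in that last stage. Both spectral averaging and the Simon--Wolff criterion yield a conclusion for \emph{Lebesgue}-almost every value of the coupling at the perturbed site, and to convert this into a statement holding for $\nu$-almost every (or even a $\nu$-positive-measure set of) choices of $\omega_{n_0}$ you must assume that $\nu$ has a non-trivial absolutely continuous component. The theorem you are proving assumes only that $\mathrm{supp}\,\nu$ contains two points, so it includes purely singular single-site distributions --- most importantly the Bernoulli case --- for which the Lebesgue-null set of ``bad'' couplings can carry full $\nu$-measure, and the argument collapses precisely at the step where you write ``for $\nu$-a.e.\ choice of $\omega_{n_0}$.'' The paper is explicit about this limitation: its spectral-averaging localization theorem is stated only for single-site distributions with an absolutely continuous component, and the singular case is deferred to Carmona--Klein--Martinelli and Damanik--Sims--Stolz, which require genuinely different tools. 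As written, your proof establishes the theorem only under the additional hypothesis $\nu_{\mathrm{ac}}\neq 0$, not in the stated generality.
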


\begin{remark} {\rm (a)} The additional assumption is necessary. Indeed, if $\mathrm{supp} \, \nu$ consists of a single point $a$, then the resulting potentials are constant and the operators $H_\omega$ have purely absolutely continuous spectrum. In fact, they are just the discrete Laplacian up to a constant shift in energy.
\\[1mm]
{\rm (b)} The conclusion of the theorem means that for $\mu$-almost every $\omega$, there are $E_k(\omega) \in \R$ and  $\psi_k(\omega) \in \ell^2(\Z)$, $k \in \Z_+$, such that $H_\omega \psi_k(\omega) = E_k(\omega) \psi_k(\omega)$ for every $k \in \Z_+$, the finite linear combinations of the $\psi_k(\omega)$ are dense in $\ell^2(\Z)$, and we have exponential decay of the form
\begin{equation}\label{e.expevdecd}
|\psi_k(\omega,n)| \le C_k(\omega) e^{-\gamma_k(\omega) | n - n_k(\omega) |}
\end{equation}
with suitable $C_k(\omega) , \gamma_k(\omega) \in (0,\infty)$ and $n_k(\omega) \in \Z$.
\\[1mm]
{\rm (c)} One usually has some control over $C_k(\omega) , \gamma_k(\omega) \in (0,\infty)$ and $n_k(\omega) \in \Z$. In fact, having control of this kind is helpful in establishing dynamical localization.
\end{remark}

The formulation of the result in the continuum case is completely analogous:

\begin{theorem}[Spectral Localization - Continuum Case]\label{t.speclocc}
Suppose the potentials $\{ V_\omega \}$ are as defined in Definition~\ref{d.potc} and the operators $\{ H_\omega \}$ are given by \eqref{e.operc}. Assume in addition that $\mathrm{supp} \, \nu$ contains at least two points. Then, for $\mu$-almost every $\omega \in \Omega$, the operator $H_\omega$ has pure point spectrum with exponentially decaying eigenvectors.
\end{theorem}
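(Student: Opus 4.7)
My plan is to mirror the strategy used for the discrete case (Theorem~\ref{t.speclocd}). There are three steps: (i) rewrite the eigenvalue equation $H_\omega \psi = E\psi$ as a product of i.i.d.\ random matrices in $SL(2,\R)$; (ii) invoke F\"urstenberg's theorem to obtain positivity of the Lyapunov exponent $\gamma(E) > 0$ for Lebesgue-almost every $E$; and (iii) convert this into pure point spectrum with exponentially decaying eigenfunctions via the Ishii-Pastur theorem together with a Simon-Wolff rank-one spectral averaging argument.

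For step (i), fix $E \in \R$ and let $A_n^E(\omega) \in SL(2,\R)$ be the transfer matrix propagating the Cauchy data of $-\psi'' + V_\omega \psi = E\psi$ from $n$ to $n+1$. Because $v$ is supported in $(0,1)$ and $V_\omega|_{(n,n+1)}(x) = \omega_n v(x-n)$, the matrix $A_n^E$ depends on $\omega$ only through the single coordinate $\omega_n$, so $\{A_n^E\}_{n \in \Z}$ is an i.i.d.\ sequence. The assumption that $\mathrm{supp}\,\nu$ contains at least two distinct points delivers at least two distinct matrices in the common support of the distribution of $A_0^E$.

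Step (ii) will be the main obstacle. F\"urstenberg's theorem requires the closed subgroup $G_E \subset SL(2,\R)$ generated by $\mathrm{supp}\,(A_0^E)_* \mu$ to be (a) non-compact and (b) strongly irreducible, i.e.\ admit no invariant finite subset of $\R P^1$. The key saving observation is that, for each fixed $c \in \mathrm{supp}\,\nu$, the entries of $A_0^E(c)$ are entire functions of $E$, so the failure of either (a) or (b) on a non-discrete set of energies would force an exact real-analytic identity in the matrix entries holding for all $E$. It therefore suffices to rule out these identities at a single convenient reference energy; this is the model-dependent part of the argument, but should be within reach given that $v$ is non-zero in $L^1(0,1)$ and $\mathrm{supp}\,\nu$ has two distinct points. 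The output of step (ii) is then $\gamma(E) > 0$ for all $E$ outside a discrete set, in particular Lebesgue-almost everywhere.

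Step (iii) is then essentially standard. Ishii-Pastur immediately excludes absolutely continuous spectrum for $\mu$-almost every $\omega$. To rule out singular continuous spectrum and produce exponential decay, I would split $H_\omega$ into two half-line operators via a one-parameter family of boundary conditions at the origin, use Oseledets' theorem to obtain, for Lebesgue-a.e.\ $E$, solutions of $H_\omega\psi = E\psi$ decaying exponentially at $\pm\infty$ at rate $\gamma(E)$, and invoke the Simon-Wolff criterion to conclude that for almost every boundary phase each half-line operator is pure point with eigenfunctions decaying at the Lyapunov rate. A Fubini argument transfers ``for almost every phase'' to ``for $\mu$-almost every $\omega$'', and re-gluing the two half-lines yields the desired conclusion for $H_\omega$ on the full line.
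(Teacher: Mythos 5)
The paper does not actually prove Theorem~\ref{t.speclocc}; for both the F\"urstenberg step and the localization step in the continuum it defers to the references (\cite{dss} in Subsection~\ref{ss.furc}, and \cite{ckm,dss} for singular distributions). Your overall architecture --- unit-cell transfer matrices that are i.i.d.\ because $v$ is supported in $(0,1)$, F\"urstenberg's theorem with analyticity in $E$ to confine the failure of (i)/(ii) to a discrete set of exceptional energies, then a spectral-averaging conversion to pure point spectrum --- is indeed the route of \cite{dss}, and your step (ii) correctly anticipates the exceptional discrete set that the paper itself flags after the theorem statement. So steps (i) and (ii) are the right plan, even if ``should be within reach'' papers over the genuinely model-dependent work of ruling out the algebraic identities for the concrete single-site potential $v$.

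The genuine gap is in step (iii). The theorem assumes only that $\mathrm{supp}\,\nu$ contains two points, so $\nu$ may be purely singular (e.g.\ Bernoulli), and a Simon--Wolff/spectral-averaging argument has nothing to average over in that case. In the paper's own discrete localization proof (Theorem~\ref{kotlocwl}) the averaging is performed over the potential values at the two sites $0$ and $1$, i.e.\ over a rank-two perturbation that stays \emph{inside} the random family $\{V_\omega\}$, and the conclusion ``positive probability, hence full measure by ergodicity'' works precisely because $\nu_{\mathrm{ac}} \neq 0$ lets one absorb the Lebesgue-generic perturbation back into the ensemble. Your variant --- averaging over a boundary phase at the origin --- is worse off: the boundary condition is not a coordinate of $\omega$ at all, so the asserted Fubini transfer from ``almost every phase'' to ``$\mu$-almost every $\omega$'' has no measure-theoretic content; a full-measure set of good phases for each fixed $\omega$ tells you nothing about any particular whole-line operator $H_\omega$, which carries no boundary condition. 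To cover the theorem as stated you need either to restrict to $\nu$ with an absolutely continuous component and average over a coupling constant of a single-site bump (the continuum analogue of the paper's Subsection~\ref{ss.spav} argument), or, for singular $\nu$, to invoke the entirely different machinery of \cite{ckm,dss} (H\"older continuity of the Lyapunov exponent and the integrated density of states, a Wegner-type estimate, and multiscale analysis), which your proposal does not supply.
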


Here, exponential decay means that the eigenvectors obey an estimate of the form
\begin{equation}\label{e.expevdecc}
|\psi_k(\omega,x)| \le C_k(\omega) e^{-\gamma_k(\omega) | x - x_k(\omega) |}
\end{equation}
with suitable $C_k(\omega) , \gamma_k(\omega) \in (0,\infty)$ and $x_k(\omega) \in \R$. While the statement of the result is completely analogous, the various existing proofs are significantly more involved in the continuum case as we will see later in the course. Moreover, there is a hidden subtlety in the continuum case. While the Lyapunov exponents are always positive at all energies in the discrete case, they may vanish on a discrete set of exceptional energies in the continuum case. This difference does not manifest itself on the spectral level, at least not in the way spectral localization is formulated above, but it does affect dynamical localization properties. There may in fact be non-trivial transport in the continuum case for initial states whose spectral measures give non-trivial weight to the set of exceptional energies.

\section{Lyapunov Exponents}

\subsection{Definition of the Lyapunov Exponent}

In this section we discuss the one-parameter family $\{ \gamma(E) \}_{E \in \R}$ of Lyapunov exponents associated with a given operator family $\{ H_\omega \}_{\omega \in \Omega}$. These exponents measure the typical exponential growth or decay of the solutions of the associated difference (resp., differential) equation, that is,
\begin{equation}\label{e.eved}
u(n+1) + u(n-1) + V_\omega(n) u(n) = E u(n)
\end{equation}
in the discrete case and
\begin{equation}\label{e.evec}
-u''(x) + V_\omega(x) u(x) = E u(x)
\end{equation}
in the continuum case. It is convenient to introduce the transfer matrices $M_{E,\omega}(\cdot)$ and measure their norms.

Let us discuss the discrete case first. Note that a solution of \eqref{e.eved} is uniquely determined by fixing two consecutive values. Moreover, the map from a given pair of consecutive values to another pair is clearly linear and hence given by a $2 \times 2$ matrix. In particular, for each $n \in \Z$, there is a $2 \times 2$ matrix $M_{E,\omega}(n)$, called the \emph{transfer matrix}, so that for any solution $u$ of \eqref{e.eved}, we have
$$
\begin{pmatrix} u(n+1) \\ u(n) \end{pmatrix} = M_{E,\omega}(n) \begin{pmatrix} u(1) \\ u(0) \end{pmatrix}.
$$
In fact, it is easy to give an explicit description of $M_{E,\omega}(n)$. Two popular and useful ways of viewing the transfer matrix are the following. The first follows directly from the way we introduced this matrix: denote by $u_D$ (resp., $u_N$) the solution of \eqref{e.eved} that obeys $(u(1),u(0))^T = (1,0)$ (resp., $(u(1),u(0))^T = (0,1)$). These are the Dirichlet and Neumann solution, respectively. Then,
\begin{equation}\label{e.tm1}
M_{E,\omega}(n) = \begin{pmatrix} u_D(n+1) & u_N(n+1) \\ u_D(n) & u_N(n) \end{pmatrix}.
\end{equation}
The second builds up the matrix in a step-by-step way. Notice that \eqref{e.eved} is equivalent to
\begin{equation}\label{e.tm2}
\begin{pmatrix} u(n+1) \\ u(n) \end{pmatrix} = \begin{pmatrix} E - V_\omega(n) & -1 \\ 1 & 0 \end{pmatrix} \begin{pmatrix} u(n) \\ u(n-1) \end{pmatrix}.
\end{equation}
For $n \ge 1$, iterating \eqref{e.tm2}, we find
\begin{equation}\label{e.tm3}
M_{E,\omega}(n) = \begin{pmatrix} E - V_\omega(n) & -1 \\ 1 & 0 \end{pmatrix} \times \cdots \times \begin{pmatrix} E - V_\omega(1) & -1 \\ 1 & 0 \end{pmatrix}.
\end{equation}
For $n \le -1$, there is an analogous formula for $M_{E,\omega}(n)$. By definition, $\det M_{E,\omega}(n) = 1$ and hence $\| M_{E,\omega}(n) \| \ge 1$, so that $\log \| M_{E,\omega}(n) \| \ge 0$. Lyapunov exponents measure the asymptotic behavior of $\frac1n \log \| M_{E,\omega}(n) \|$ and hence the rate of exponential growth of $\| M_{E,\omega}(n) \|$.

\begin{definition}\label{d.lyapbeh}
If the limit
$$
\lim_{n \to \infty} \frac1n \log \| M_{E,\omega}(n) \|
$$
exists, we denote it by $\gamma_\omega(E)$ and say that $H_\omega$ has Lyapunov behavior at energy $E$.
\end{definition}

Later in this section, we will pursue the following goals. First, we want to show that in our setting, for every energy $E$, $\gamma_\omega(E)$ exists for $\mu$-almost every $\omega \in \Omega$ and is equal to a constant, which we denote by $\gamma(E)$. As observed above, we always have $\gamma_\omega(E) , \gamma(E) \ge 0$. Hence, the main issue is whether these exponents are strictly positive. Our second goal will be to exhibit exponential growth or decay for all solutions of \eqref{e.eved} provided that $\gamma_\omega(E)$. Third, we wish to show that randomness forces positive exponents. In the discrete case we will indeed prove that $\gamma(E) > 0$ for every $E$. Naively, this seems to already establish spectral localization. While this conclusion cannot be drawn in general, it is true that global positivity strongly hints at spectral localization and, with more work, will in fact imply it for random potentials. We will discuss this in more detail later.

Let us now discuss the continuum case. A solution of \eqref{e.evec} is uniquely determined by its value and its derivative at some point. The defining property of the transfer matrices is again that it maps solution data from the origin to a given point, that is,
$$
\begin{pmatrix} u'(x) \\ u(x) \end{pmatrix} = M_{E,\omega}(x) \begin{pmatrix} u'(0) \\ u(0) \end{pmatrix}.
$$
Introducing Dirichlet and Neumann solutions as above, we can write
\begin{equation}\label{e.tm4}
M_{E,\omega}(x) = \begin{pmatrix} u_D'(x) & u_N'(x) \\ u_D(x) & u_N(x) \end{pmatrix}.
\end{equation}
On the other hand, \eqref{e.evec} is equivalent to
\begin{equation}\label{e.tm5}
\begin{pmatrix} u'(x) \\ u(x) \end{pmatrix}' = \begin{pmatrix} 0 & V_\omega(n) - E \\ 1 & 0 \end{pmatrix} \begin{pmatrix} u'(x) \\ u(x) \end{pmatrix}.
\end{equation}
Thus, using \eqref{e.tm5} to trace the evolution of the Dirichlet solution $u_D$ and the Neumann solution $u_N$ and plugging this into \eqref{e.tm4}, we find
\begin{equation}\label{e.tm6}
\frac{\partial}{\partial x} M_{E,\omega}(x) = \begin{pmatrix} 0 & V_\omega(n) - E \\ 1 & 0 \end{pmatrix} M_{E,\omega}(x),
\end{equation}
which is the continuum analogue of \eqref{e.tm2}. The definition of Lyapunov behavior and the exponent $\gamma_\omega(E)$ is then the same as in Definition~\ref{d.lyapbeh}.

\subsection{Existence of the Lyapunov Exponent}

For the existence of the Lyapunov exponent, it is important that $(\Omega,\mu,T)$ is ergodic, that is, if $B$ belongs to the Borel $\sigma$-algebra and satisfies $T^{-1} B = B$, then either $\mu(B) = 0$ or $\mu(B) = 1$. Let us establish this fact first:

\begin{lemma}\label{l.fserg}
$(\Omega,\mu,T)$ is ergodic.
\end{lemma}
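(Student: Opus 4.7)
The plan is to prove something stronger than ergodicity, namely that $(\Omega,\mu,T)$ is mixing, i.e.\ $\mu(T^{-n}A \cap B) \to \mu(A)\mu(B)$ for all Borel $A,B \subset \Omega$. Mixing implies ergodicity by the standard argument: if $T^{-1}B = B$, then $\mu(B) = \mu(B \cap T^{-n}B) \to \mu(B)^2$, forcing $\mu(B) \in \{0,1\}$.

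To establish mixing, I would first verify it on the algebra $\mathcal{A}$ of \emph{cylinder sets}, i.e.\ sets of the form $C = \{\omega : \omega_j \in A_j \text{ for } j \in F\}$ with $F \subset \Z$ finite and each $A_j$ Borel in $\mathrm{supp}\,\nu$. Given two such cylinders $C_1, C_2$ depending on coordinates in finite sets $F_1, F_2$, the shifted cylinder $T^{-n}C_1$ depends on coordinates in $F_1 - n$. For all $n$ sufficiently large, $(F_1 - n) \cap F_2 = \emptyset$, and then the product structure $\mu = \nu^\Z$ gives independence of the events $T^{-n}C_1$ and $C_2$, so $\mu(T^{-n}C_1 \cap C_2) = \mu(C_1)\mu(C_2)$ exactly (not merely in the limit).

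Second, I would extend the mixing property from $\mathcal{A}$ to the full Borel $\sigma$-algebra by an $\varepsilon/3$ approximation. Since $\mathcal{A}$ generates the Borel $\sigma$-algebra, for any Borel $A, B$ and any $\varepsilon > 0$ there exist $A_\varepsilon, B_\varepsilon \in \mathcal{A}$ with $\mu(A \triangle A_\varepsilon) < \varepsilon$ and $\mu(B \triangle B_\varepsilon) < \varepsilon$. Using $T$-invariance of $\mu$ (which is immediate from the product structure), one then writes
$$
\bigl| \mu(T^{-n}A \cap B) - \mu(A)\mu(B) \bigr| \le \bigl| \mu(T^{-n}A_\varepsilon \cap B_\varepsilon) - \mu(A_\varepsilon)\mu(B_\varepsilon) \bigr| + O(\varepsilon),
$$
and the main term vanishes for $n$ large by the cylinder computation. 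Letting $\varepsilon \to 0$ gives mixing, hence ergodicity.

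There is no serious obstacle here; the only point requiring a modicum of care is to verify that the product $\sigma$-algebra on $\Omega$ really is generated by the cylinder algebra $\mathcal{A}$ (which is a standard fact about product measures on compact metric spaces, applicable since $\mathrm{supp}\,\nu$ is compact) and that the symmetric-difference approximation is available on this algebra, which follows from standard measure-theoretic density results for product measures.
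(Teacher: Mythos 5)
Your proposal is correct. The core mechanism is the same as the paper's --- approximation of Borel sets by finite unions of cylinder sets together with the exact independence $\mu(T^{-n}C_1 \cap C_2) = \mu(C_1)\mu(C_2)$ once the shifted coordinates are disjoint --- but you package it differently: you first establish the strictly stronger property of \emph{mixing} for arbitrary pairs of Borel sets (cylinder computation plus a two-set $\varepsilon$-approximation) and then deduce ergodicity as a corollary. The paper instead applies the approximation directly to the invariant set $B$ itself: choosing a single approximant $C$ with $\mu(B \bigtriangleup C) < \varepsilon$, it uses the invariance $T^{-1}B = B$ together with $T$-invariance of $\mu$ to get $\mu(B \bigtriangleup T^{-n}C) = \mu(T^{-n}(B \bigtriangleup C)) < \varepsilon$ for free, so that $C \cap T^{-n}C$ approximates $B$ to within $2\varepsilon$ and $|\mu(B) - \mu(B)^2| < 4\varepsilon$ follows at once. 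Your route costs one extra approximation and the (routine) verification that the cylinder algebra generates the product $\sigma$-algebra, but it buys the stronger conclusion that $(\Omega,\mu,T)$ is mixing, not merely ergodic; the paper's route is shorter because it never needs mixing for pairs other than $(B, B)$. Both are complete and correct.
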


\begin{proof}
Suppose $B$ belongs to the Borel $\sigma$-algebra and satisfies $T^{_1} B = B$. Let $\varepsilon > 0$. By construction of the product measure, we can find a finite collection $C_1 , \ldots , C_k$ of mutually disjoint cylinder sets such that
\begin{equation}\label{e.fserg0}
\mu \left( B \bigtriangleup C \right) < \varepsilon, \quad \text{ where } C =  \bigcup_{j=1}^k C_j.
\end{equation}
Choose $n$ so that $T^{-n} C$ and $C$ depend on disjoint sets of coordinates. Thus, by construction of the product measure and its $T$-invariance, we have
\begin{equation}\label{e.fserg1}
\mu( T^{-n} C \cap C ) = \mu(T^{-n} C) \mu(C) = \mu(C) \mu(C) = \mu(C)^2.
\end{equation}
We also have by the choice of $B$, the $T$-invariance of $\mu$, and the choice of $C$,
$$
\mu(B \bigtriangleup T^{-n} C) = \mu(T^{-n} B \bigtriangleup T^{-n} C) = \mu(T^{-n} (B \bigtriangleup C)) = \mu(B \bigtriangleup C) < \varepsilon.
$$
Thus,
$$
\mu(B \bigtriangleup (C \cap T^{-n} C)) \le \mu ( B \bigtriangleup C ) + \mu ( B \bigtriangleup T^{-n} C ) < 2 \varepsilon,
$$
where the first step follows from $B \bigtriangleup (C \cap T^{-n} C) \subseteq B \bigtriangleup C \cup B \bigtriangleup T^{-n} C$. It follows that
\begin{equation}\label{e.fserg2}
\left| \mu ( B ) - \mu (C \cap T^{-n} C) \right| < 2 \varepsilon.
\end{equation}
Hence,
\begin{align*}
\left| \mu ( B ) - \mu ( B )^2 \right| & = \left| \mu ( B ) - \mu (C \cap T^{-n} C) \right|  + \left| \mu (C \cap T^{-n} C) - \mu ( B )^2 \right| \\
& < 2 \varepsilon + \left| \mu (C)^2 - \mu ( B )^2 \right| \\
& = 2 \varepsilon + \left| \mu (C) + \mu ( B ) \right| \cdot \left| \mu (C) - \mu ( B ) \right| \\
& \le 2 \varepsilon + 2 \varepsilon \\
& = 4 \varepsilon,
\end{align*}
where we used \eqref{e.fserg1} and \eqref{e.fserg2} in the second step and \eqref{e.fserg0} in the fourth step. Since $\varepsilon > 0$ was arbitrary, we have $\mu ( B ) = \mu ( B )^2$ and hence $\mu ( B ) \in \{ 0,1 \}$.
\end{proof}

A useful characterization of ergodicity is the following:

\begin{lemma}\label{l.ergchar}
$(\Omega,\mu,T)$ is ergodic if and only if every measurable $T$-invariant function on $\Omega$ is $\mu$-almost surely constant.
\end{lemma}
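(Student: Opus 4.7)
The plan is to prove both directions directly, with the forward direction based on super-/sublevel sets of an invariant function and the backward direction based on indicator functions of invariant sets.

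For the backward implication (invariant functions constant $\Rightarrow$ ergodic), I would take a Borel set $B$ with $T^{-1}B = B$ and consider its indicator function $\mathbf{1}_B$. The invariance of $B$ translates into $\mathbf{1}_B \circ T = \mathbf{1}_{T^{-1}B} = \mathbf{1}_B$ pointwise, so $\mathbf{1}_B$ is a measurable $T$-invariant function. By hypothesis it is $\mu$-a.s. equal to a constant $c$, and since it only takes the values $0$ and $1$, we must have $c \in \{0,1\}$. Thus $\mu(B) = \int \mathbf{1}_B \, d\mu = c \in \{0,1\}$, which is precisely ergodicity.

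For the forward implication (ergodic $\Rightarrow$ invariant functions constant), I would take a measurable $T$-invariant function $f : \Omega \to \R$ and slice it by sublevel sets. For each $c \in \R$, let $B_c = \{\omega \in \Omega : f(\omega) \le c\}$. Since $f \circ T = f$, we have $T^{-1} B_c = B_c$, so by ergodicity $\mu(B_c) \in \{0,1\}$. The function $F(c) = \mu(B_c)$ is non-decreasing in $c$, right-continuous, tends to $0$ as $c \to -\infty$, and tends to $1$ as $c \to +\infty$ (by $\sigma$-additivity of $\mu$). A non-decreasing $\{0,1\}$-valued function with these limits has a unique jump point $c_0 \in \R$, and then $\mu(\{f < c_0\}) = 0$ and $\mu(\{f > c_0\}) = 0$, so $f = c_0$ $\mu$-almost surely.

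I do not anticipate a real obstacle here; the argument is entirely standard measure theory once the two directions are separated. The only point requiring a small amount of care is the interpretation of ``$T$-invariant function'': I will take it to mean $f \circ T = f$ pointwise (or $\mu$-a.e., which can be reduced to the pointwise case by modifying $f$ on a null set), so that the preimage identity $T^{-1}B_c = B_c$ holds exactly and the appeal to ergodicity is immediate. Once this is fixed, both implications are short.
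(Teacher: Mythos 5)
Your proof is correct. The paper leaves this lemma as an exercise, so there is no argument of its own to compare against; your two directions (indicator functions of invariant sets for one implication, the $\{0,1\}$-valued distribution function $c \mapsto \mu(\{f \le c\})$ and its unique jump for the other) are the standard solution, and your remark about reducing $\mu$-a.e.\ invariance of $f$ to exact invariance by modifying $f$ on the $T$-orbit of a null set is exactly the one point that needed flagging.
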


\begin{proof}
Exercise.
\end{proof}

As an aside we mention that the spectrum of $H_\omega$ is $\mu$-almost surely constant.

\begin{prop}
There is a set $\Sigma \subset \R$ such that $\sigma(H_\omega) = \Sigma$ for $\mu$-almost every $\omega \in \Omega$.
\end{prop}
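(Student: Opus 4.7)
The plan is to exploit ergodicity of $(\Omega,\mu,T)$ together with a covariance relation linking $H_\omega$ and $H_{T\omega}$. Let $U$ be the unitary shift on $\ell^2(\Z)$ given by $(U\psi)(n)=\psi(n-1)$ (in the continuum case, translation by $1$ on $L^2(\R)$). Using Definition~\ref{d.potd} (resp.\ Definition~\ref{d.potc}), a direct computation shows $U H_\omega U^{-1}=H_{T\omega}$. Since unitarily equivalent operators have equal spectra, we get $\sigma(H_\omega)=\sigma(H_{T\omega})$. Consequently, for every Borel set $I\subseteq\R$, the event
\[
A_I:=\{\omega\in\Omega:\sigma(H_\omega)\cap I\neq\emptyset\}
\]
is $T$-invariant, so once measurability is established, Lemma~\ref{l.fserg} forces $\mu(A_I)\in\{0,1\}$.

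Next I would reduce to a countable family. Fix an enumeration $\{I_k\}_{k\in\N}$ of the open intervals with rational endpoints, and set $\mathcal{N}:=\{k:\mu(A_{I_k})=0\}$. Define
\[
\Sigma:=\R\setminus\bigcup_{k\in\mathcal{N}}I_k,
\]
which is closed. Let $\Omega_0$ be the intersection over $k\in\mathcal{N}$ of $\Omega\setminus A_{I_k}$ with the intersection over $k\notin\mathcal{N}$ of $A_{I_k}$. As a countable intersection of full-measure sets, $\mu(\Omega_0)=1$.

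I claim $\sigma(H_\omega)=\Sigma$ for all $\omega\in\Omega_0$. For the inclusion $\sigma(H_\omega)\subseteq\Sigma$: for each $k\in\mathcal{N}$, $\omega\in\Omega_0$ gives $\sigma(H_\omega)\cap I_k=\emptyset$, so $\sigma(H_\omega)$ lies in the complement of $\bigcup_{k\in\mathcal{N}}I_k=\R\setminus\Sigma$. For the reverse: let $E\in\Sigma$ and take any rational open interval $I_k\ni E$. Then $k\notin\mathcal{N}$ (else $E\in I_k\subseteq\R\setminus\Sigma$), so $\sigma(H_\omega)\cap I_k\neq\emptyset$. Since the $I_k$ containing $E$ form a neighborhood basis at $E$ and $\sigma(H_\omega)$ is closed, $E\in\sigma(H_\omega)$.

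The main technical point I expect to work through is the measurability of $A_I$ for $I$ open. I would express it as a countable union of measurable events: picking a countable total set $\{\phi_j\}$ in the Hilbert space (e.g.\ $\{\delta_n\}_{n\in\Z}$ in the discrete case), the spectral projection $\chi_I(H_\omega)$ is nonzero iff $\langle\phi_i,\chi_I(H_\omega)\phi_j\rangle\neq 0$ for some $i,j$, and then
\[
A_I=\bigcup_{i,j}\bigl\{\omega:\langle\phi_i,\chi_I(H_\omega)\phi_j\rangle\neq 0\bigr\}.
\]
Measurability of each matrix element in $\omega$ follows by approximating $\chi_I$ from below by a monotone sequence of continuous compactly supported functions $f_m\uparrow\chi_I$, using that $\omega\mapsto\langle\phi_i,f_m(H_\omega)\phi_j\rangle$ is measurable (indeed continuous under mild assumptions on $\omega\mapsto H_\omega$, via power-series/Weierstrass approximation in the bounded case, and via resolvent convergence in the continuum case), and invoking the spectral theorem to pass to the limit. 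Everything else is bookkeeping with ergodicity.
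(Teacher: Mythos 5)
Your proposal is correct and follows essentially the same route as the paper: both arguments rest on the shift-covariance of $H_\omega$, the resulting $T$-invariance (and measurability) of the quantity detecting whether $\sigma(H_\omega)$ meets a given interval, ergodicity from Lemma~\ref{l.fserg}, and a reduction to the countable family of rational intervals. The only cosmetic difference is that the paper packages the invariant quantity as the almost surely constant function $\omega\mapsto\mathrm{tr}\,\chi_I(H_\omega)$ and invokes Lemma~\ref{l.ergchar}, whereas you apply the $0$--$1$ law directly to the invariant event $\{\omega:\sigma(H_\omega)\cap I\neq\emptyset\}$; these are the same argument.
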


\begin{proof}
By definition of $H_\omega$, we have for every bounded open interval $I = (a,b)$,
$$
\mathrm{tr} \, \chi_I(H_{T\omega}) = \sum_{n \in \Z} \langle \delta_n,
\chi_I(H_{T\omega}) \delta_n \rangle = \sum_{n \in \Z} \langle
\delta_{n+1}, \chi_I(H_{\omega}) \delta_{n+1} \rangle = \mathrm{tr} \, \chi_I(H_{\omega}).
$$
Thus, by Lemmas~\ref{l.fserg} and \ref{l.ergchar}, $\mathrm{tr} \, \chi_I(H_{\omega})$ is $\mu$-almost surely constant. Of course, the full measure set depends on $I$. However, if we restrict our attention to intervals with rational endpoint, we can choose a common full measure set. Since $\mathrm{tr} \, \chi_I(H_{\omega}) = 0$ if and only if $\sigma(H_\omega) \cap I = \emptyset$, the proposition follows.
\end{proof}

In fact, one can determine $\Sigma$ explicitly:

\begin{prop}
With $\Sigma = [-2,2] + \mathrm{supp} \, \nu$, we have $\sigma(H_\omega) = \Sigma$ for $\mu$-almost every $\omega \in \Omega$.
\end{prop}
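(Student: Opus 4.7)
The plan is to establish $\sigma(H_\omega) = \Sigma$ almost surely by proving the two inclusions $\sigma(H_\omega) \subseteq \Sigma$ and $\Sigma \subseteq \sigma(H_\omega)$ separately. The first inclusion will actually hold deterministically for every $\omega \in \Omega$; the second requires a full-measure set and exploits the independence and support-spreading properties of the i.i.d.\ potential.

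For the upper inclusion I would combine the Weyl criterion with a compact-support cutoff. If $E \in \sigma(H_\omega)$ and $\varepsilon > 0$, then there is a normalized $\phi \in \ell^2(\Z)$ with finite support, say in $[-N+1, N-1]$, satisfying $\|(H_\omega - E)\phi\| < \varepsilon$; such a $\phi$ can be produced by truncating any Weyl sequence, using the boundedness of $H_\omega$. Since $H_\omega \phi$ is then supported in $[-N, N]$, the same $\phi$ serves as an approximate eigenvector of the Dirichlet restriction $H_\omega^{(N)}$ on $\ell^2([-N, N])$ with the same error. That finite Hermitian matrix has diagonal entries in $\mathrm{supp}\,\nu$ and off-diagonal entries of modulus $1$, so Gershgorin's theorem places its spectrum inside $\bigcup_{n = -N}^{N} [V_\omega(n) - 2,\, V_\omega(n) + 2] \subseteq \Sigma$. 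Hence $E$ lies within $\varepsilon$ of $\Sigma$; closedness of $\Sigma$ and $\varepsilon \to 0$ give $E \in \Sigma$.

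For the lower inclusion I would build approximate eigenvectors from truncated plane waves translated into favorable regions. Given $\lambda \in [-2,2]$ and $v \in \mathrm{supp}\,\nu$, write $\lambda = 2\cos\theta$ and set $\phi_N(n) = (2N+1)^{-1/2} e^{i n \theta}$ for $|n| \le N$ and zero elsewhere; a direct calculation yields $\|\phi_N\| = 1$ and $\|(\Delta - \lambda)\phi_N\| = O(N^{-1/2})$, the error arising only from the boundary. To turn $\phi_N$ into an approximate eigenvector of $H_\omega$ at $E = \lambda + v$, I translate it to a site $m$ where $|V_\omega(m + n) - v| < \delta$ for all $|n| \le N$. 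Because $v \in \mathrm{supp}\,\nu$, this event has positive probability $\nu((v - \delta, v + \delta))^{2N+1}$, and such events are independent for $m$'s separated by more than $2N$; the second Borel-Cantelli lemma (or equivalently the ergodicity of $T$ established in Lemma~\ref{l.fserg}) then implies that almost surely infinitely many such $m$ exist. The translated function gives a Weyl sequence for $H_\omega$ at $E$ with total error $O(N^{-1/2}) + \delta$, so $E \in \sigma(H_\omega)$ almost surely.

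To upgrade these pointwise-in-$E$ statements to a single full-measure statement, I fix a countable dense subset $D \subseteq \Sigma$ (for instance $D = \{\lambda + v : \lambda \in [-2,2] \cap \Q,\ v \in S\}$ for some countable dense $S \subseteq \mathrm{supp}\,\nu$) and intersect the countably many full-measure sets produced above, obtaining $\Omega'$ with $\mu(\Omega') = 1$ and $D \subseteq \sigma(H_\omega)$ for every $\omega \in \Omega'$; closedness of $\sigma(H_\omega)$ then yields $\Sigma = \overline{D} \subseteq \sigma(H_\omega)$. The main obstacle is the lower inclusion, where one has to transfer a spectral fact about the constant-potential Laplacian into a spectral fact about the random operator by exploiting recurrence of nearly-constant windows; the upper inclusion is purely deterministic and reduces to the Gershgorin estimate on finite boxes, and the coordination across a countable dense set of energies is routine null-set bookkeeping.
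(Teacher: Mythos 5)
The paper leaves this proposition as an exercise, so there is no in-text proof to compare against; your argument is the standard one and it is correct. Both halves check out: the Gershgorin bound on the Dirichlet boxes (together with the truncation of a Weyl sequence, which is legitimate because $H_\omega$ is bounded with nearest-neighbor hopping) gives $\sigma(H_\omega) \subseteq \Sigma$ deterministically, and the translated truncated plane waves combined with the Borel--Cantelli/ergodicity argument for the occurrence of near-constant windows give the reverse inclusion on a full-measure set, with the countable dense set of energies $E = 2\cos\theta + v$ and closedness of $\sigma(H_\omega)$ handling the passage to all of $\Sigma$.
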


\begin{proof}
Exercise.
\end{proof}

This finishes the aside on the almost sure spectrum. Let us return to our study of Lyapunov exponents and to this end recall Kingman's Subadditive Ergodic Theorem:

\begin{theorem}\label{t.kingman}
Suppose $(\Omega,\mu,T)$ is ergodic. If $g_n : \Omega \to \R$ are measurable, obey $\|g_n\|_\infty \lesssim n$ and the subadditivity condition
$$
g_{n+m}(\omega) \le g_n(\omega) + g_m(T^n \omega),
$$
then
$$
\lim_{n \to \infty} \frac{1}{n} g_n(\omega) = \inf_{n \ge 1} \,
\frac{1}{n} \int g_n (\omega) \, d\mu(\omega)
$$
for $\mu$-almost every $\omega \in \Omega$.
\end{theorem}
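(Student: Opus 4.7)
The plan is to derive $\gamma := \lim a_n/n = \inf_n a_n/n$ for $a_n := \int g_n\, d\mu$ from Fekete's lemma, then show that $\overline{g}(\omega) := \limsup_n g_n(\omega)/n$ and $\underline{g}(\omega) := \liminf_n g_n(\omega)/n$ are $\mu$-almost surely constants $c_+, c_-$, and finally prove $c_+ \le \gamma$ and $c_- \ge \gamma$ so that both equal $\gamma$. Integrating the subadditivity and using $T$-invariance of $\mu$ yields $a_{n+m} \le a_n + a_m$; combined with $|a_n| \le Cn$, Fekete's lemma gives $\gamma \in [-C,C]$. Specializing subadditivity to $g_{m+1}(\omega) \le g_1(\omega) + g_m(T\omega)$, dividing by $m$, and letting $m \to \infty$ yields $\overline{g}(T\omega) \ge \overline{g}(\omega)$ and similarly for $\underline{g}$; since both functions are bounded and $\mu$ is $T$-invariant, equality holds $\mu$-a.s., so by Lemma~\ref{l.ergchar} they are $\mu$-a.s.\ constants.

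For the upper bound $c_+ \le \gamma$, fix $N \ge 1$ and write $n = kN + r$ with $0 \le r < N$. Iterating subadditivity,
$$g_n(\omega) \le \sum_{j=0}^{k-1} g_N(T^{jN}\omega) + g_r(T^{kN}\omega),$$
where the last term is $O(N)$. Birkhoff's ergodic theorem applied to the map $T^N$ and the bounded observable $g_N$ gives $\frac{1}{k}\sum_{j=0}^{k-1} g_N(T^{jN}\omega) \to \E[g_N \mid \mathcal{I}_{T^N}](\omega)$ $\mu$-a.s., so dividing the previous display by $n$ and using $n/k \to N$ yields $\overline{g}(\omega) \le \frac{1}{N}\E[g_N \mid \mathcal{I}_{T^N}](\omega)$ $\mu$-a.s. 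Integrating and using $T^N$-invariance of $\mu$ gives $c_+ \le a_N/N$, and infimizing over $N$ delivers $c_+ \le \gamma$. Reverse Fatou applied to the bounded sequence $g_n/n$ supplies the companion $c_+ \ge \gamma$, hence $c_+ = \gamma$.

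The main obstacle is the lower bound $c_- \ge \gamma$, since subadditivity only controls $g_n$ from above. My plan is a measurable first-passage argument: after replacing $g_n$ by $g_n + Cn$ to make all entries nonnegative (which preserves subadditivity), fix $\epsilon > 0$ and define
$$\tau(\omega) := \min\bigl\{ n \ge 1 : g_n(\omega) \le (c_- + \epsilon)\,n \bigr\}.$$
Since $\underline{g} = c_-$ $\mu$-a.s., $\tau < \infty$ $\mu$-a.s., so there exists $N_0$ with $\mu(\tau > N_0) < \epsilon$. For large $n$ and $\mu$-a.e.\ $\omega$, greedily partition $[0, n)$ into intervals of length $\tau(T^s\omega)$ whenever $\tau(T^s\omega) \le N_0$, and length-one ``bad'' intervals otherwise. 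Iterating subadditivity along the break-points, each good block contributes at most $(c_- + \epsilon)$ times its length, while each bad step contributes at most a constant; Birkhoff applied to $\mathbf{1}_{\{\tau > N_0\}}$ controls the density of bad starts by $\epsilon$, yielding $g_n(\omega)/n \le c_- + O(\epsilon)$ $\mu$-a.s. Taking expectations, $a_n/n \le c_- + O(\epsilon)$, and since $\gamma \le a_n/n$, letting $\epsilon \to 0$ gives $\gamma \le c_-$. The delicate points I expect to be hardest are the measurability and the uniform-in-$\omega$ construction of the greedy partition, and the bookkeeping required to keep the bad-step contribution genuinely $O(\epsilon)$ rather than spoiling the estimate.
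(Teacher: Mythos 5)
The paper does not prove this theorem --- Kingman's subadditive ergodic theorem is quoted there as a black box and immediately applied --- so your proposal has to stand on its own, and it does: what you have outlined is essentially Steele's soft proof of Kingman's theorem, correctly adapted to the uniformly bounded setting $\|g_n\|_\infty \lesssim n$, which spares you all the integrability issues of the general case. Each step checks out. Fekete gives $\gamma=\inf_n a_n/n$ finite; the near-invariance $\overline g\le \overline g\circ T$ and $\underline g\le \underline g\circ T$ upgrades to a.e.\ equality by integrating against the invariant measure, whence both are a.s.\ constant (note that Lemma~\ref{l.ergchar} as stated concerns exactly invariant functions, so either modify $\overline g$ on a null set or invoke the standard a.e.-invariant version); the block decomposition $n=kN+r$ plus Birkhoff for $T^N$ with the conditional expectation $\E[g_N\mid\mathcal I_{T^N}]$ gives $c_+\le a_N/N$ for every $N$; and the first-passage argument gives $\gamma\le c_-$. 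The points you flag as delicate all go through routinely: $\tau$ is measurable as a countable infimum of measurable conditions; the greedy partition is a deterministic function of $\omega$ and needs no uniformity; and the bad starts are contained in $\{s<n:\tau(T^s\omega)>N_0\}\cup\{n-N_0<s<n\}$, so after normalizing $g_n\ge 0$ (which shifts $c_-$ and $\gamma$ by the same constant) one gets $g_n(\omega)\le (c_-+\epsilon)n+C\,\#\{s<n:\tau(T^s\omega)>N_0\}+CN_0$, and taking expectations directly --- here $T$-invariance of $\mu$ suffices and Birkhoff is not even needed --- yields $a_n/n\le c_-+\epsilon+C\epsilon+CN_0/n$ and hence $\gamma\le c_-$. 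Combined with $c_-\le c_+\le\gamma$ this closes the proof; the reverse-Fatou step giving $c_+\ge\gamma$ is correct but logically redundant once the lower bound is in place.
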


\begin{coro}
For every $E$, there is $\gamma (E) \ge 0$ such that for $\mu$-almost every $\omega \in \Omega$, we have
$$
\lim_{n \to \infty} \frac1n \log \| M_{E,\omega}(n) \| = \gamma(E).
$$
In particular, for these $\omega$'s, $H_\omega$ has Lyapunov behavior at energy $E$ and $\gamma_\omega(E) = \gamma(E)$.
\end{coro}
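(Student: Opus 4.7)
The plan is to deduce the corollary as an essentially direct application of Kingman's Subadditive Ergodic Theorem (Theorem~\ref{t.kingman}), with $g_n(\omega) := \log \| M_{E,\omega}(n) \|$. The ergodicity of $(\Omega,\mu,T)$ is already supplied by Lemma~\ref{l.fserg}, so only the three hypotheses on the sequence $\{g_n\}$ need to be checked: measurability, the linear bound $\|g_n\|_\infty \lesssim n$, and subadditivity with respect to the shift.

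First I would record the cocycle identity, which in the discrete case is read off directly from \eqref{e.tm3}: since the factor indexed by $j$ involves $V_\omega(j) = f(T^j \omega)$, one has
\[
M_{E,\omega}(n+m) = M_{E,T^n \omega}(m) \, M_{E,\omega}(n).
\]
Taking operator norms, using submultiplicativity, and then logarithms yields
\[
g_{n+m}(\omega) \le g_n(\omega) + g_m(T^n \omega),
\]
which is exactly the subadditivity hypothesis. In the continuum case the same identity holds for the solution of the ODE \eqref{e.tm6}, by uniqueness of solutions; the only change is that one must first verify that the transfer matrix indeed satisfies this cocycle relation with respect to the unit shift $T$, which follows because $V_\omega(x+n) = V_{T^n \omega}(x)$ by Definition~\ref{d.potc}.

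Next I would verify the norm bound. In the discrete case each factor in \eqref{e.tm3} has entries bounded by $|E| + \sup \{|s| : s \in \mathrm{supp}\,\nu\} + 1$, so its operator norm is bounded by some deterministic $C = C(E) < \infty$; hence $\|M_{E,\omega}(n)\| \le C^n$ and $g_n(\omega) \le n \log C$. The lower bound $g_n(\omega) \ge 0$ comes from $\det M_{E,\omega}(n) = 1$, which forces $\|M_{E,\omega}(n)\| \ge 1$. In the continuum case the analogous upper bound is provided by Gronwall's inequality applied to \eqref{e.tm6}, using that $V_\omega$ is uniformly $L^1_\mathrm{loc}$. Measurability of $g_n$ in either case is immediate from the explicit formula for $M_{E,\omega}(n)$ as a continuous function of the (finitely many, in the discrete case) coordinates of $\omega$ that it depends on.

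With these checks in place, Theorem~\ref{t.kingman} applies and delivers a $\mu$-full-measure set on which
\[
\lim_{n \to \infty} \frac{1}{n} g_n(\omega) = \inf_{n \ge 1} \frac{1}{n} \int g_n \, d\mu =: \gamma(E),
\]
and the lower bound $g_n \ge 0$ forces $\gamma(E) \ge 0$. I do not anticipate a serious obstacle here; the one mildly subtle point is checking the cocycle identity with the correct ordering of the factors (the shift must act on the left factor of the product corresponding to \eqref{e.tm3}), but once that is verified the argument is mechanical.
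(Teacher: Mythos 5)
Your proposal is correct and follows exactly the paper's route: apply Kingman's Subadditive Ergodic Theorem to $g_n(\omega) = \log \| M_{E,\omega}(n) \|$, using the cocycle identity for the transfer matrices to get subadditivity and the determinant-one condition for nonnegativity. You simply spell out the hypothesis checks (norm bound, measurability, ordering of factors in the cocycle relation) that the paper dismisses as "easy to check."
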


\begin{proof}
We apply Theorem~\ref{t.kingman}. Fix $E$ and set
$$
g_n(\omega,E) = \log \| M_{E,\omega}(n) \|.
$$
It is easy to check that these functions satisfy the subadditivity condition
$$
g_{n+m}(\omega,E) \le g_n(\omega,E) + g_m(T^n \omega , E).
$$
We may therefore apply Kingman's Subadditive Ergodic Theorem. For $\mu$-almost every $\omega \in \Omega$, we have
$$
\lim_{n \to \infty} \frac{1}{n} g_n(\omega,E) = \inf_{n \ge 1} \, \frac{1}{n} \int g_n (\omega,E) \, d\mu(\omega),
$$
so we denote the right-hand side by $\gamma(E)$ and the corollary follows.
\end{proof}

\subsection{The Oseledec-Ruelle Theorem}

\begin{theorem}\label{ruelle}
Suppose $A_n \in \mathrm{SL}(2,\R)$ obey
$$
\lim_{n \to \infty} \frac{1}{n} \| A_n \| = 0
$$
and
$$
\lim_{n \to \infty} \frac{1}{n} \log \| A_n \cdots A_1 \| = \gamma
> 0.
$$
Then there exists a one-dimensional subspace $V \subset \R^2$ such
that
$$
\lim_{n \to \infty} \frac{1}{n} \log \| A_n \cdots A_1 v \| = -
\gamma \quad \text{ for } v \in V \setminus \{0\}
$$
and
$$
\lim_{n \to \infty} \frac{1}{n} \log \| A_n \cdots A_1 v \| =
\gamma \quad \text{ for } v \not\in V.
$$
\end{theorem}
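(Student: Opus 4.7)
I will set $B_n := A_n \cdots A_1$ and $\lambda_n := \log \|B_n\|$, so the assumption reads $\lambda_n / n \to \gamma > 0$. Since $B_n \in \mathrm{SL}(2, \R)$, its singular values are $e^{\pm \lambda_n}$; let $V_n \subset \R^2$ be the most-contracted direction, i.e., the line on which $B_n$ has norm $e^{-\lambda_n}$. The singular value decomposition then gives, for any unit vector $v$ making angle $\theta$ with $V_n$,
$$
\|B_n v\|^2 \;=\; \cos^2 \theta \cdot e^{-2 \lambda_n} \;+\; \sin^2 \theta \cdot e^{2 \lambda_n}.
$$
The plan is to take $V := \lim_n V_n$ and then read off the claimed rates from this identity.

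The decisive step is establishing this limit. For a unit vector $v \in V_{n+1}$ one has $\|B_{n+1} v\| = e^{-\lambda_{n+1}}$, so
$$
\|B_n v\| \;=\; \|A_{n+1}^{-1} B_{n+1} v\| \;\le\; \|A_{n+1}\| \cdot e^{-\lambda_{n+1}},
$$
using $\|A_{n+1}^{-1}\| = \|A_{n+1}\|$, which holds because every matrix in $\mathrm{SL}(2, \R)$ has reciprocal singular values. Combining with the lower bound $\|B_n v\| \ge |\sin \angle(V_n, V_{n+1})| \, e^{\lambda_n}$ coming from the display above yields
$$
|\sin \angle(V_n, V_{n+1})| \;\le\; \|A_{n+1}\| \cdot e^{-\lambda_n - \lambda_{n+1}} \;\le\; e^{-(2 \gamma - \varepsilon) n}
$$
for any $\varepsilon > 0$ and $n$ large, where I use $\log \|A_{n+1}\| = o(n)$ (a consequence of the hypothesis $\|A_n\|/n \to 0$) together with $\lambda_n, \lambda_{n+1} \ge (\gamma - \varepsilon/4) n$. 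Since $\gamma > 0$ this bound is summable, so $V_n$ is Cauchy in $\mathbb{RP}^1$ and converges; set $V := \lim V_n$ and note that telescoping the geometric estimate yields $\angle(V, V_n) = O(e^{-(2 \gamma - \varepsilon) n})$.

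The two asymptotic claims then follow from the first display. For $v \notin V$, $\angle(v, V_n) \to \angle(v, V) > 0$, so eventually $\|B_n v\| \ge c \cdot e^{\lambda_n}$, and combined with the trivial $\|B_n v\| \le e^{\lambda_n} \|v\|$ this gives $\lim \frac{1}{n} \log \|B_n v\| = \gamma$. For $v \in V \setminus \{0\}$, substituting the exponential angle bound into the display shows that both summands in $\|B_n v\|^2$ are dominated by $e^{-(2 \gamma - \varepsilon) n}$, so $\limsup \frac{1}{n} \log \|B_n v\| \le -\gamma$; the matching lower bound $\|B_n v\| \ge \|v\|/\|B_n^{-1}\| = \|v\| \, e^{-\lambda_n}$ finishes the argument.

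The main obstacle is the Cauchy estimate for $V_n$. The essential move is to estimate only \emph{consecutive} angles, where the error $\log \|A_{n+1}\| = o(n)$ is decisively beaten by the gap $\lambda_n + \lambda_{n+1} \approx 2 \gamma n$ afforded by $\gamma > 0$. A direct estimate between $V_n$ and $V_m$ for general $m > n$ would pay a cost of $\log \|A_m \cdots A_{n+1}\|$, which admits only the trivial bound $\lambda_m + \lambda_n$ and therefore provides no information; working one step at a time and then summing bypasses this and in fact yields the explicit exponential rate needed to conclude decay in the direction $V$.
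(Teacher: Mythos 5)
Your argument is correct and is essentially the paper's own proof: both identify $V$ as the limit of the most-contracted singular directions of $B_n=A_n\cdots A_1$, prove convergence via a summable bound on consecutive angles (you pull back $V_{n+1}$ through $A_{n+1}^{-1}$, the paper pushes forward $V_n$ through $A_{n+1}$ --- mirror images of the same estimate, yielding the same bound $\lesssim \|A_{n+1}\|^2\|B_n\|^{-2}$), and then read off the rates $\pm\gamma$ from the singular value decomposition identity together with the exponential tail bound on $\angle(V,V_n)$. No gaps.
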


\begin{proof}
Write $T_n = A_n \cdots A_1$, $t_n = \| T_n \|$, $a_n = \| A_n
\|$, and
$$
u_\theta = \left( \begin{array}{c} \cos \theta \\ \sin \theta
\end{array} \right).
$$
Since $|T_n|$ is self-adjoint and unimodular, it has eigenvalues
$t_n$ and $t_n^{-1}$. Define $\theta_n$ by $|T_n| u_{\theta_n} =
t_n^{-1} u_{\theta_n}$. Then, by self-adjointness again, $|T_n|
u_{\theta_n + \frac{\pi}{2}} = t_n u_{\theta_n + \frac{\pi}{2}}$.
It follows from the trigonometric formulae $\sin (x + y) = \sin x
\cos y + \cos x \sin y$ and $\cos (x + y) = \cos x \cos y - \sin x
\sin y$ that
$$
u_\theta = \cos (\theta - \theta_n) u_{\theta_n} + \sin (\theta -
\theta_n) u_{\theta_n + \frac{\pi}{2}}.
$$
Thus, using $\| T_n u_\theta \| = \| |T_n| u_\theta \|$,
\begin{equation}\label{tutheta}
\| T_n u_\theta \|^2 = t_n^2 \sin^2 (\theta - \theta_n) + t_n^{-2}
\cos^2 (\theta - \theta_n).
\end{equation}
By \eqref{tutheta} (with $n$ replaced by $n+1$),
\begin{align*}
t_{n+1}^2 \sin^2 (\theta_n - \theta_{n+1}) & \le \| T_{n+1} u_{\theta_n} \|^2 \\
& \le a_{n+1}^2 \| T_n u_{\theta_n} \|^2 \\
& = a_{n+1}^2 t_n^{-2}.
\end{align*}
Since $A_{n+1}$ is unimodular,
$$
t_n = \| T_n \| \le \| T_{n+1} \| \, \| A_{n+1}^{-1} \| = \|
T_{n+1} \| \, \| A_{n+1} \| = t_{n+1} a_{n+1},
$$
and hence
$$
t_n^2 \sin^2 (\theta_n - \theta_{n+1}) \le a_{n+1}^4 t_n^{-2}.
$$
Since $\sin^2 (x) \gtrsim x^2$ for the values of $x$ in question,
we obtain
\begin{equation}\label{thetannpo}
| \theta_n - \theta_{n+1} | \lesssim \frac{a_{n+1}^2}{t_n^2}.
\end{equation}
It follows from our assumptions that $\sum | \theta_n -
\theta_{n+1} | < \infty$ and hence $\theta_n \to \theta_{\infty}$,
which obeys
\begin{equation}\label{thetainf}
| \theta_n - \theta_\infty | \lesssim \sum_{m = n}^\infty
\frac{a_{m+1}^2}{t_m^2}.
\end{equation}
Let $u_\infty = u_{\theta_\infty}$ and $v_\infty =
u_{\theta_\infty + \frac{\pi}{2}}$. We claim that the assertion of
the theorem holds with $V$ given by the span of $u_\infty$. Since
$\| T_n v_\infty \| \le t_n$ and $\| T_n u_\infty \| \ge
t_n^{-1}$, it suffices to show
\begin{equation}\label{vinflow}
\| T_n v_\infty \|^2 \ge \tfrac12 t_n^2 \quad \text{for $n$ large
enough}
\end{equation}
and
\begin{equation}\label{uinfupp}
\limsup_{n \to \infty} \frac1n \log \| T_n u_\infty \| \le
-\gamma.
\end{equation}
Since $\theta_n - \theta_\infty \to 0$, \eqref{vinflow} follows
from \eqref{tutheta}. For every $\varepsilon > 0$ and $n$ large,
we have by \eqref{tutheta}, \eqref{thetannpo}, and
\eqref{thetainf},
\begin{align*}
\| T_n u_\infty \|^2 & = t_n^2 \sin^2 (\theta_\infty - \theta_n) +
t_n^{-2} \cos^2
(\theta_\infty - \theta_n) \\
& \le t_n^2 | \theta_\infty - \theta_n |^2 + e^{-2(1-\varepsilon)\gamma n} \\
& \lesssim t_n^2 \left( \sum_{m = n}^\infty
\frac{a_{m+1}^2}{t_m^2} \right)^2 +
e^{-2(1-\varepsilon)\gamma n} \\
& \le e^{2(1+\varepsilon ) \gamma n} \left( \sum_{m = n}^\infty
\frac{e^{2\varepsilon
\gamma m}}{e^{2(1-\varepsilon )\gamma m}} \right)^2 + e^{-2(1-\varepsilon)\gamma n} \\
& = e^{2(1+\varepsilon ) \gamma n} \left( \sum_{m = n}^\infty
e^{(4\varepsilon - 2) \gamma m} \right)^2 +
e^{-2(1-\varepsilon)\gamma n}
\end{align*}
and hence $\limsup_{n \to \infty} \frac1n \log \| T_n u_\infty \|
\le (-1 + 5 \varepsilon) \gamma$. Since this holds for every
$\varepsilon > 0$, we get \eqref{uinfupp}.
\end{proof}

\subsection{F\"urstenberg's Theorem}

In this subsection, let $\nu$ denote a probability measure on $\mathrm{SL}(2,\R)$ which
satisfies
\begin{equation}\label{logintegrable}
\int \log \| M \| \, d\nu(M) < \infty.
\end{equation}
Let us consider i.i.d.\ matrices $T_1,T_2,\ldots$, each
distributed according to $\nu$. Write $M_n = T_n \cdots T_1$. We
are interested in the Lyapunov exponent $\gamma \ge 0$, given by
$$
\gamma = \lim_{n \to \infty} \frac1n \log \| M_n \| , \quad
\nu^{\Z_+}-\text{a.s.}
$$
We are interested in conditions that ensure $\gamma > 0$. To
motivate the result below, let us give some examples with $\gamma
= 0$:
\begin{itemize}

\item If $\nu$ is supported in $\mathrm{SO}(2,\R)$, then $\gamma
= 0$.

\item If
$$
\nu \left\{ \left( \begin{array}{cc} 2 & 0 \\ 0 & 1/2 \end{array}
\right) \right\} = \frac12 \quad \text{ and } \quad \nu \left\{
\left( \begin{array}{cc} 1/2 & 0 \\ 0 & 2
\end{array} \right) \right\} = \frac12,
$$
then $\gamma = 0$: We have that
$$
M_n = \left( \begin{array}{cc} m_n & 0 \\ 0 & m_n^{-1} \end{array}
\right),
$$
where $\log m_n = a_1 + \cdots + a_n$ and $\{a_j\}$ are i.i.d.\
random variables taking values $\pm \log 2$, each with probability
$1/2$. Thus, $\log \| M_n \| = | a_1 + \cdots + a_n |$ and the
strong law of large numbers gives $\frac1n \log \| M_n \| \to 0$
almost surely.

\item If $p \in (0,1)$ and
$$
\nu \left\{ \left( \begin{array}{cc} 2 & 0 \\ 0 & 1/2 \end{array}
\right) \right\} = p \quad \text{ and } \quad \nu \left\{ \left(
\begin{array}{rc} 0 & 1 \\ -1 & 0
\end{array} \right) \right\} = 1-p,
$$
then $\gamma = 0$.

\end{itemize}

Furstenberg's Theorem shows that this list is essentially
exhaustive in the sense that the two mechanisms above, no growth
of norms or a finite (cardinality $=2$) invariant set of
directions, are the only ones that can preclude a positive
Lyapunov exponent.

Call two non-zero vectors $v_1,v_2$ in $\R^2$ equivalent if $v_2 =
\lambda v_1$ for some $\lambda \in \R$. The set of equivalence
classes is denoted by $\PP^1$. Since every $M \in
\mathrm{SL}(2,\R)$ is invertible, it induces a mapping from
$\PP^1$ to $\PP^1$ in the obvious way.

Let $\mathcal{M}(\PP^1)$ denote the set of probability measures
$m$ on $\PP^1$. Given $M \in \mathrm{SL}(2,\R)$ and $m \in
\mathcal{M}(\PP^1)$, we define $Mm \in\mathcal{M}(\PP^1)$ by
$$
\int f(v) \, d(Mm)(v) = \int f(Mv) \, dm(v).
$$
Moreover, we define the convolution $\nu \ast m \in
\mathcal{M}(\PP^1)$ by
$$
\int f(v) \, d(\nu \ast m)(v) = \iint f(Mv) \, d\nu(M) \, dm(v).
$$
If $\nu \ast m = m$, then $m$ is called $\nu$-\textit{invariant}.
By a Krylov-Bogoliubov argument, $\nu$-invariant measures always
exist.

%
We now state the main result of this section:

\begin{theorem}\label{furst}
Let $\nu$ be a probability measure on $\mathrm{SL}(2,\R)$ which
satisfies \eqref{logintegrable}. Denote by $G_\nu$ the smallest
closed subgroup of $\mathrm{SL}(2,\R)$ which contains
$\mathrm{supp} \, \nu$.

Assume
\begin{itemize}
\item[(i)] $G_\nu$ is not compact.
\end{itemize}
and one of the following conditions:
\begin{itemize}

\item[(ii)] There is no finite non-empty set $L \subseteq \PP^1$
such that $M(L) = L$ for all $M \in G_\nu$.

\item[(ii')] There is no set $L \subseteq \PP^1$ of cardinality
$1$ or $2$ such that $M(L) = L$ for all $M \in G_\nu$.


\end{itemize}
Then, $\gamma > 0$.
\end{theorem}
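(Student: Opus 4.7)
I plan to argue by contradiction: assume $\gamma = 0$ and construct a $G_\nu$-invariant subset of $\PP^1$ of cardinality at most two, contradicting (ii'). By the Krylov-Bogoliubov argument noted just before the theorem, there exists a $\nu$-invariant probability measure $m$ on $\PP^1$. The plan is then (a) to upgrade $\nu$-invariance of $m$ to $G_\nu$-invariance using $\gamma = 0$, and (b) to use non-compactness of $G_\nu$ to force $\mathrm{supp}\,m$ to consist of at most two points.

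For (a), I would first establish the Furstenberg formula
\begin{equation*}
\gamma = \int\int \log\frac{\|Mv\|}{\|v\|}\, d\nu(M)\, dm(\bar{v}).
\end{equation*}
The derivation is a telescoping identity: writing $\log\|M_n\cdots M_1 v\|/\|v\| = \sum_{k=0}^{n-1}\log(\|M_{k+1}w_k\|/\|w_k\|)$ with $w_k = M_k\cdots M_1 v$, integrating against $\nu^{\otimes n}\otimes m$, and using $\nu$-invariance of $m$ to observe that each projective class $\bar{w}_k$ is distributed according to $m$, the right-hand side becomes $n$ times the double integral above. The left-hand side divided by $n$ tends to $\gamma$ by Kingman's Subadditive Ergodic Theorem combined with dominated convergence, using the bound $|\log(\|M_n\cdots M_1 v\|/\|v\|)| \le \log\|M_n\cdots M_1\|$ valid since $\|M^{-1}\| = \|M\|$ for $M\in\mathrm{SL}(2,\R)$.

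Next I would compute the relative entropies $H(Mm \,\|\, m)$ using the explicit Radon-Nikodym derivative for the projective action (in angle coordinates, $d(Mm)/dm(\bar{v})$ acquires a factor of $\|M^{-1}u_{\bar{v}}\|^{-2}$). After $\nu$-averaging, the $\log$-density terms cancel by $\nu$-invariance, leaving the clean identity
\begin{equation*}
\int H(Mm \,\|\, m)\, d\nu(M) = 2\gamma.
\end{equation*}
With $\gamma = 0$, this forces $H(Mm \,\|\, m) = 0$ for $\nu$-almost every $M$, i.e., $Mm = m$, and since $\{M\in\mathrm{SL}(2,\R) : Mm = m\}$ is a closed subgroup of $\mathrm{SL}(2,\R)$ containing $\mathrm{supp}\,\nu$, it contains $G_\nu$. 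This step, the crux of the proof, is where the main obstacle lies: the relative entropy computation must be handled with care when $m$ has singular parts, and the passage from an integrated identity to pointwise equality of measures relies essentially on the vanishing of $\gamma$.

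For (b), I would pick $g_n\in G_\nu$ with $\|g_n\|\to\infty$ (possible by (i)) and pass to a subsequence along which $g_n/\|g_n\|$ converges to a rank-one matrix. Analyzing the singular value decomposition of $g_n$ shows that the projective action of $g_n$ contracts the complement of a single point in $\PP^1$ to another single point. Passing to the weak-$\ast$ limit in the identity $g_n m = m$ (valid by (a)) therefore forces $\mathrm{supp}\,m$ to be contained in a set of at most two points, giving a $G_\nu$-invariant subset of $\PP^1$ of cardinality at most two and contradicting (ii'); under the stronger hypothesis (ii), this contradicts the absence of any finite $G_\nu$-invariant set.
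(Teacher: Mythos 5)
Your overall strategy (argue by contradiction, upgrade the stationary measure $m$ to a genuinely $G_\nu$-invariant one, then use non-compactness to collapse its support) is a legitimate alternative to the paper's direct argument, and your step (b) is essentially the paper's Lemma~\ref{auxlem2}: once $Mm=m$ for all $M \in G_\nu$ and $m$ is non-atomic (Lemma~\ref{lemnonat}), the stabilizer of $m$ is a compact subgroup containing $G_\nu$, contradicting (i) immediately. The telescoping derivation of the Furstenberg formula is also fine (and in the contradiction setting $\gamma=0$ the sandwich $-\frac1n\log\|M_n\| \le \frac1n\log\|M_nv\|/\|v\| \le \frac1n\log\|M_n\|$ makes the limit uniform in $v$, so you do not even need Oseledec there).

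The gap is the identity $\int H(Mm\,\|\,m)\,d\nu(M) = 2\gamma$, which is the load-bearing step and which is false. Your computation of $d(Mm)/dm$ via the factor $\|M^{-1}u_{\bar v}\|^{-2}$ identifies the Radon--Nikodym derivative with the geometric Jacobian of the projective action; this is only valid when $m$ is (equivalent to) Lebesgue measure on $\PP^1$, whereas stationary measures here are typically singular, and the ``cancellation of log-density terms'' presupposes a density that need not exist. With the correct interpretation, the left-hand side is the Furstenberg entropy of $(\PP^1,m)$: stationarity $\nu \ast m = m$ gives (say for $\nu = \sum p_i \delta_{M_i}$) $M_i m \le p_i^{-1} m$, hence $H(M_i m\,\|\,m) \le \log(1/p_i)$ and $\int H(Mm\,\|\,m)\,d\nu(M) \le H(\nu)$, the Shannon entropy of $\nu$ --- a bound independent of the matrix sizes, while $2\gamma$ can be made arbitrarily large. (The discrepancy is precisely the dimension theory of stationary measures: $h_\nu(m) = 2\gamma \dim m$ in the exact-dimensional setting.) So vanishing of $\gamma$ does not yield vanishing of this entropy by your computation, and the deduction $Mm=m$ for $\nu$-a.e.\ $M$ is not established. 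The paper bridges exactly this gap with different machinery: a bounded-martingale argument showing $M_n(\omega)m \to \delta_{v_\omega}$ weakly a.s.\ (Lemma~\ref{furstlemmlong}), which via non-atomicity forces $\log\|M_n^*v\| \to \infty$ off a single direction (Lemma~\ref{furstlemmshort}), and then the ergodic lemma that almost-surely divergent Birkhoff sums of an $L^1$ function have strictly positive mean (Lemma~\ref{furstprooflemmmid}), giving $\gamma = \E(f) > 0$ directly. If you want to keep the contradiction format, you would need to replace the entropy identity with an argument of this martingale type.
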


\noindent\textit{Remarks.} (a) We will show that (i)+(ii) implies $\gamma > 0$.\\
(b) To so see that (ii) can be replaced by (ii'), we remark that
(i)+(ii') implies~(ii). To prove this, it is enough to show that
if (i) holds and $v_1, \ldots , v_k$ are distinct elements of
$\PP^1$ with
$$
M ( \{ v_1, \ldots , v_k \} ) = \{ v_1, \ldots , v_k \} \quad
\text{for every } M \in G_\nu,
$$
then $k \le 2$. Each $M \in G_\nu$ induces a permutation $\pi(M)$
of $\{ v_1, \ldots , v_k \}$, and $\pi : G_\nu \to \mathcal{S}_k$
is a group homomorphism. The kernel $H$ of $\pi$ is a closed
normal subgroup of $G_\nu$, and $G_\nu / H$ is finite. By (i), $H$
is not finite. If $k \ge 3$, consider representatives of
$v_1,v_2,v_3$, which we denote by the same symbols. Write
$$
v_3 = \alpha v_1 + \beta v_2, \quad \alpha,\beta \not= 0.
$$
If $M \in H$, there are non-zero $\lambda_i$ such that $Mv_i =
\lambda_i v_i$, $i = 1,2,3$. This yields
\begin{align*}
\lambda_3 \alpha v_1 + \lambda_3 \beta v_2 & = \lambda_3 v_3 \\ &
= M v_3 \\& = \alpha M v_1 + \beta M v_2 \\ & = \alpha \lambda_1
v_1 + \beta \lambda_2 v_2.
\end{align*}
It follows that $\lambda_1 = \lambda_2 = \lambda_3$ and hence $M =
\lambda_1 \mathrm{Id}$. Since $M \in \mathrm{SL}(2,\R)$, this
shows $H \subseteq \{ \pm \mathrm{Id}
\}$ and $H$ is finite; contradiction.\\
(c) Note that the assumptions are monotonic in the support of the measure in the sense that if Theorem~\ref{furst} applies to $\nu$,
then it applies to any measure whose support contains the support of $\nu$.\\
(d) Theorem~\ref{furst} is a special case of a far more general result from \cite{fursten}.

\begin{lemma}\label{lemnonat}
If $\nu$ satisfies the assumption {\rm (ii)} of
Theorem~\ref{furst}, then every $\nu$-invariant measure is
non-atomic.
\end{lemma}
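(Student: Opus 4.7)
The plan is to argue by contradiction. Suppose some $\nu$-invariant measure $m$ has an atom. I will extract from $m$ a finite non-empty set $L \subseteq \PP^1$ which is preserved by every element of $G_\nu$, directly contradicting hypothesis (ii) of Theorem~\ref{furst}.

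The first step is to single out the set of atoms of maximal $m$-mass. Let $p := \sup_{v \in \PP^1} m(\{v\})$; since $m$ has at least one atom, $p > 0$. A simple pigeonhole observation --- the collection of $v \in \PP^1$ with $m(\{v\}) \ge p/2$ has at most $2/p$ elements --- shows that the supremum is attained. Hence $L := \{v \in \PP^1 : m(\{v\}) = p\}$ is a finite non-empty subset of $\PP^1$.

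The second step uses the $\nu$-invariance of $m$. Unfolding the definition of $\nu \ast m$ on the indicator of a single point $v \in L$ gives
$$
p \;=\; m(\{v\}) \;=\; \int m(\{M^{-1}v\}) \, d\nu(M).
$$
Since the integrand is everywhere bounded above by $p$, it must equal $p$ for $\nu$-almost every $M$. Thus $M^{-1}v \in L$ for $\nu$-a.e.\ $M$, and running this observation over the finitely many $v \in L$ yields $M^{-1}L \subseteq L$ for $\nu$-a.e.\ $M$. Because $L$ is finite and $M^{-1}$ is a bijection of $\PP^1$, this inclusion is automatically an equality: $ML = L$ for $\nu$-a.e.\ $M$.

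Finally, the stabilizer $H := \{M \in \mathrm{SL}(2,\R) : ML = L\}$ is a closed subgroup of $\mathrm{SL}(2,\R)$ (being defined by finitely many closed conditions on the action of $M$ on the finite set $L$), and we have just shown $\nu(H) = 1$. Therefore $\mathrm{supp}\, \nu \subseteq H$, and since $H$ is a closed subgroup, $G_\nu \subseteq H$ as well; this contradicts (ii) and completes the argument. The one subtle point I expect to have to nail down carefully is precisely this last transition from ``$\nu$-a.e.\ $M$ stabilizes $L$'' to ``every $M \in G_\nu$ stabilizes $L$'', which is where the hypothesis that $G_\nu$ is the smallest \emph{closed} subgroup containing $\mathrm{supp}\, \nu$ is used.
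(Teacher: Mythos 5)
Your argument is correct and follows essentially the same route as the paper's: extract the finite non-empty set $L$ of atoms of maximal mass, use $\nu$-invariance plus the fact that the integrand $m(\{M^{-1}v\})$ is bounded by the maximum to conclude $M^{-1}L \subseteq L$ for $\nu$-a.e.\ $M$, and upgrade to equality by finiteness. Your final step --- passing from ``$\nu$-a.e.\ $M$'' to ``all $M \in G_\nu$'' via the closed stabilizer subgroup of $L$ --- is in fact a more careful rendering of the point the paper glosses over with ``and hence, by finiteness of $L$, for all $M$.''
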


\begin{proof}
Assume that $m$ is $\nu$-invariant with
$$
w = \max \{ m(\{v\}) : v \in \PP^1 \} > 0.
$$
Let
$$
L = \{ v : m(\{v\}) = w \},
$$
which is a finite non-empty subset of $\PP^1$. It follows that,
for $v_0 \in L$,
\begin{align*}
w & = m(\{ v_0 \}) \\
& = (\nu \ast m) (\{ v_0 \}) \\
& = \iint \chi_{\{ v_0 \}}(Mv) \, d\nu(M) \, dm(v) \\
& = \iint \chi_{\{ M^{-1} v_0 \}}(v) \, dm(v) \, d\nu(M) \\
& = \int m \left( \{ M^{-1} v_0 \} \right) \, d\nu(M),
\end{align*}
where we used $\nu$-invariance in the second step.

On the other hand, $m \left( \{ M^{-1} v_0 \} \right) \le w$ for
all $M$ by the definition of $w$. Since the integral is equal to
$w$, we see that $m \left( \{ M^{-1} v_0 \} \right) = w$ for
$\nu$-almost every $M$. In other words, $\{ M^{-1} v_0 \} \in L$
for $\nu$-almost every $M$. It follows that $M^{-1}(L) \subseteq
L$ for $\nu$-almost every $M$, and hence, by finiteness of $L$,
for all $M$. This shows that $M(L) = L$ for all $M$, and so (ii)
fails.
\end{proof}

From now on, we assume that $\nu$ satisfies (i) and (ii), and $m
\in \mathcal{M}(\PP^1)$ is $\nu$-invariant (and hence non-atomic).
Our first goal is to express the Lyapunov exponent in terms of
these two measures.

\begin{lemma}
We have that
$$
\gamma = \iint \log \frac{\|Mv\|}{\|v\|} \, d\nu(M) \, dm(v).
$$
\end{lemma}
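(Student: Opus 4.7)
The plan is to realize $\log \|M_n v\|/\|v\|$ as a Birkhoff sum over a skew-product dynamical system, apply Birkhoff's ergodic theorem to identify the integral $I := \iint \log(\|Mv\|/\|v\|)\, d\nu(M)\, dm(v)$ with the $\tilde\mu$-average of the pointwise limit, and then identify that limit with $\gamma$.

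Step 1 (skew product). Put $\Omega = \mathrm{SL}(2,\R)^{\Z_+}$ with product measure $\nu^{\Z_+}$, let $T$ be the left shift, and consider $S : \Omega \times \PP^1 \to \Omega \times \PP^1$ defined by $S(\omega,\bar v) = (T\omega, T_1(\omega)\bar v)$. The $\nu$-invariance of $m$ is precisely what is needed to make $\tilde\mu := \nu^{\Z_+} \times m$ an $S$-invariant probability measure.

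Step 2 (cocycle and Birkhoff). Set $F(\omega,\bar v) = \log(\|T_1(\omega) v\|/\|v\|)$, which descends to $\PP^1$ and lies in $L^1(\tilde\mu)$, since unimodularity of $T_1$ gives $|F| \le 2 \log\|T_1\|$ and \eqref{logintegrable} applies. A telescoping computation shows
$$
\sum_{k=0}^{n-1} F \circ S^k(\omega,\bar v) = \log \frac{\|M_n v\|}{\|v\|}.
$$
Birkhoff's theorem therefore gives an $S$-invariant $\tilde F \in L^1(\tilde\mu)$ with $\tfrac{1}{n}\log(\|M_n v\|/\|v\|) \to \tilde F$ $\tilde\mu$-a.e.\ and in $L^1$, and $\int \tilde F\, d\tilde\mu = \int F\, d\tilde\mu = I$.

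Step 3 (identification with $\gamma$). The upper bound $\|M_n v\| \le \|M_n\|\|v\|$ together with $\tfrac{1}{n}\log\|M_n\| \to \gamma$ forces $\tilde F \le \gamma$ $\tilde\mu$-a.e. For the matching lower bound we split cases. If $\gamma = 0$, the symmetric bound $\|M_n v\|/\|v\| \ge \|M_n\|^{-1}$ yields $\tilde F \ge 0$, whence $\tilde F = 0 = \gamma$ a.e. If $\gamma > 0$, I would apply the Oseledec-Ruelle theorem (Theorem~\ref{ruelle}) to the $\nu^{\Z_+}$-a.e.\ sequence $(T_n)$; its hypothesis $\tfrac{1}{n}\log\|T_n\| \to 0$ holds by a routine Borel-Cantelli argument using \eqref{logintegrable}. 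Ruelle produces, for a.e.\ $\omega$, a single exceptional line $V(\omega) \subset \PP^1$ on which the limit is $-\gamma$, with the limit equal to $+\gamma$ for every $\bar v \notin V(\omega)$. By Lemma~\ref{lemnonat}, $m$ is non-atomic, so $m(V(\omega)) = 0$ for each such $\omega$; Fubini then gives $\tilde F = \gamma$ $\tilde\mu$-a.e. Integrating produces $I = \gamma$.

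The main obstacle is the lower bound $\tilde F \ge \gamma$, which is not purely dynamical: it requires the fine structure of Ruelle's dichotomy (a single exceptional direction) combined with the non-atomicity of $m$ supplied by Lemma~\ref{lemnonat}, and a separate trivial treatment of the degenerate case $\gamma = 0$.
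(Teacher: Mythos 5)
Your argument is correct and follows essentially the same route as the paper: realize $\log\bigl(\|M_n v\|/\|v\|\bigr)$ as a Birkhoff sum over the skew product $(\omega,\bar v)\mapsto(\sigma\omega,T_1(\omega)\bar v)$ preserving $\nu^{\Z_+}\times m$, then identify the a.e.\ pointwise limit with $\gamma$ via the Oseledec--Ruelle theorem. You are in fact more careful than the paper's terse proof in two places it glosses over --- the separate treatment of $\gamma=0$ (where Theorem~\ref{ruelle} does not apply) and the explicit use of non-atomicity of $m$ to discard the exceptional line $V(\omega)$ --- both of which are genuinely needed.
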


\begin{proof}
This follows quickly from Birkhoff and Osceledec-Ruelle: The shift
$\sigma$ on $\mathrm{SL}(2,\R)^{\Z_+}$, the space of sequences
$(T_1(\omega),T_2(\omega),\ldots)$, has the ergodic measure
$\nu^{\Z_+}$. The skew-product
$$
\tilde \sigma : \mathrm{SL}(2,\R)^{\Z_+} \times \PP^1 \to
\mathrm{SL}(2,\R)^{\Z_+} \times \PP^1 , \quad (\omega,v) \mapsto
(\sigma \omega, T_1(\omega)v)
$$
leaves invariant the measure $\nu^{\Z_+} \times m$. Consider the
function
\begin{equation}\label{furstprooffdef}
f(\omega,v) = \log \frac{\|T_1(\omega) v\|}{\|v\|}.
\end{equation}
Then,
$$
\frac1n \sum_{m=1}^n f(\tilde \sigma^m(\omega,v)) = \frac1n \log
\frac{\|T_n(\omega) \cdots T_1(\omega) v\|}{\|v\|} = \frac1n \log
\frac{\|M_n(\omega) v\|}{\|v\|},
$$
which, on the one hand, converges to $\gamma$ for almost every
$\omega$ and almost every $v$ by Osceledec-Ruelle, and, on the
other hand, Birkhoff's Theorem gives that it converges to $\iint f
\, d\nu^{\Z_+}\, dm$. Putting these two things together,
\begin{equation}\label{furstgammaident}
\gamma = \iint \log \frac{\|T_1(\omega) v\|}{\|v\|} \,
d\nu^{\Z_+}(\omega)\, dm(v) = \iint \log \frac{\|Mv\|}{\|v\|} \,
d\nu(M) \, dm(v),
\end{equation}
as claimed.
\end{proof}

Let us prove two auxiliary lemmas, which will be useful later in
the proof of Theorem~\ref{furst}.

\begin{lemma}\label{auxlem1}
If $m \in \mathcal{M}(\PP^1)$ is non-atomic and $M_n \not= 0$
converge to $M \not= 0$, then $M_n m \to M m$ weakly.
\end{lemma}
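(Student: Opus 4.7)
The plan is to unpack weak convergence to a statement about integrals of continuous test functions and then apply the dominated convergence theorem. The non-atomicity hypothesis enters only to absorb the at most one exceptional direction where the limiting map on $\PP^1$ can fail to be defined or continuous.

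Concretely, for any $f \in C(\PP^1)$, by the definition of the pushforward I need to show
$$\int f(M_n v)\, dm(v) \longrightarrow \int f(Mv)\, dm(v).$$
Let $K \in \PP^1$ denote the kernel direction of $M$, which is empty if $M$ is invertible and is a single point if $\mathrm{rank}\, M = 1$. Representing $\PP^1$ by unit vectors modulo sign, the normalized action $(N,v) \mapsto Nv/\|Nv\|$ is jointly continuous on the open set $\{(N,v) : Nv \neq 0\}$. Hence for any $v \neq K$ we have $Mv \neq 0$, so for $n$ large also $M_n v \neq 0$ and $M_n v \to Mv$ in $\PP^1$; continuity of $f$ then gives $f(M_n v) \to f(Mv)$.

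Next, since $\{K\}$ is at most one point and $m$ is non-atomic, $m(\{K\}) = 0$, so the above pointwise convergence holds $m$-almost everywhere (any individual $M_n$ of rank one contributes its own single bad direction, but the countable union of these sets is still $m$-null). By compactness of $\PP^1$ and continuity of $f$ we have $|f(M_n v)| \le \|f\|_\infty$ uniformly in $n$ and $v$, so the dominated convergence theorem finishes the proof.

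The only genuine subtlety I expect is the rank-one case: there $v \mapsto Mv$ fails to extend continuously to $K$, which is exactly why non-atomicity of $m$ is needed. If $m$ placed a positive atom at $K$, then $Mm$ would be ill-defined on that atom, and approximating sequences $M_n$ could distribute that mass anywhere in $\PP^1$, so weak convergence would typically fail. Non-atomicity makes the measurable bad set negligible and reduces everything to a routine dominated-convergence argument.
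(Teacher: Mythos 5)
Your proof is correct and follows essentially the same route as the paper's: identify the at most countably many kernel directions of $M$ and the $M_n$, use non-atomicity to discard them, get $m$-a.e.\ pointwise convergence $f(M_n v) \to f(Mv)$, and conclude by dominated convergence with the bound $\|f\|_\infty$. Your additional remarks on the rank-one case and on why non-atomicity is genuinely needed are accurate but not a departure from the paper's argument.
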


\begin{proof}
Since $M \not= 0$, there is at most one direction $v$ for which
$Mv$ is not defined (because the vectors in the direction of $v$
are in $\mathrm{Ker} \, M$), similarly for all the $M_n$. Thus,
for $v$ outside a countable set $C \subseteq \PP^1$, $M_n v$ and
$M v$ are defined and we have $M_n v \to M v$ as $n \to \infty$.
If $f \in C(\PP^1)$, we therefore get
$$
\int f(v) \, d(M_n m) (v) = \int f(M_n v) \, dm(v) \to \int f(M v)
\, dm(v) = \int f(v) \, d(M m) (v)
$$
by dominated convergence and $m(C) = 0$. That is, $M_n m \to M m $
weakly.
\end{proof}

\begin{lemma}\label{auxlem2}
If $m \in \mathcal{M}(\PP^1)$ is non-atomic, then
$$
H = \{ M \in \mathrm{SL}(2,\R) : M m = m \}
$$
is a compact subgroup of $\mathrm{SL}(2,\R)$.
\end{lemma}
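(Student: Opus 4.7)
The plan is to verify the three requirements in turn: that $H$ is a subgroup, that $H$ is closed, and that $H$ is bounded. The first two are essentially formal; the third is the substantive point.

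First I check the group axioms: if $M_1 m = m$ and $M_2 m = m$, then $(M_1 M_2) m = M_1 (M_2 m) = M_1 m = m$, and $M m = m$ implies $m = M^{-1} m$ by pushing forward under $M^{-1}$. The identity obviously lies in $H$. For closedness, suppose $M_n \in H$ and $M_n \to M$ in $\mathrm{SL}(2,\R)$. Then $M$ and all $M_n$ are non-zero, so Lemma~\ref{auxlem1} gives $M_n m \to M m$ weakly; since the left-hand side is the constant sequence $m$, we obtain $M m = m$ and $M \in H$.

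The main step is showing that $H$ is bounded. I would argue by contradiction: suppose there are $M_n \in H$ with $\|M_n\| \to \infty$. Using the $KAK$ (polar) decomposition in $\mathrm{SL}(2,\R)$, write $M_n = U_n D_n V_n$ with $U_n, V_n \in \mathrm{SO}(2,\R)$ and $D_n = \mathrm{diag}(t_n, t_n^{-1})$, where $t_n = \|M_n\| \to \infty$. Passing to a subsequence (using compactness of $\mathrm{SO}(2,\R)$), assume $U_n \to U$ and $V_n \to V$. Now the action of $D_n$ on $\PP^1$ sends every direction other than $[e_2]$ to a direction converging to $[e_1]$. Therefore, for every $v \in \PP^1$ with $V v \neq [e_2]$, one has $M_n v = U_n D_n V_n v \to U[e_1]$. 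The set of exceptional $v$'s is the singleton $\{V^{-1}[e_2]\}$, which has $m$-measure zero by non-atomicity.

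Consequently, for any $f \in C(\PP^1)$, dominated convergence yields
$$
\int f \, d(M_n m) \;=\; \int f(M_n v) \, dm(v) \;\longrightarrow\; f(U[e_1]),
$$
so $M_n m \to \delta_{U[e_1]}$ weakly. But $M_n \in H$ forces $M_n m = m$ for every $n$, so $m = \delta_{U[e_1]}$, contradicting the non-atomicity of $m$. Hence $H$ is bounded, and being a closed bounded subset of $\mathrm{SL}(2,\R) \subset \mathrm{GL}(2,\R)$ (viewed inside $\R^{2\times 2}$), it is compact.

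The anticipated obstacle is only the boundedness step, and the key trick there is the use of the polar decomposition to separate the ``rotational'' part, which lives in a compact group, from the diagonal stretching part, whose large-norm limit collapses the mass of $m$ onto a single point in $\PP^1$—directly incompatible with non-atomicity.
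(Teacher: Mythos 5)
Your proof is correct, and its overall shape matches the paper's: both establish that $H$ is a closed subgroup essentially formally and then derive a contradiction from an unbounded sequence $M_n \in H$ by showing that, in the limit, $m$ would have to be a Dirac measure. The difference is in how that collapse is extracted. The paper normalizes, passes to a subsequence with $\|M_{n_k}\|^{-1}M_{n_k} \to M_\infty \neq 0$, notes $\det M_\infty = \lim \|M_{n_k}\|^{-2} = 0$ so that $M_\infty$ is rank one, and invokes Lemma~\ref{auxlem1} (non-atomicity of $m$) to conclude $M_\infty m = m$ is a point mass. You instead use the $KAK$ decomposition $M_n = U_n D_n V_n$ and compute directly that $M_n v \to U[e_1]$ for all $v$ outside the single exceptional direction $V^{-1}[e_2]$, which is $m$-null by non-atomicity; this gives $M_n m \to \delta_{U[e_1]}$ by dominated convergence without appealing to Lemma~\ref{auxlem1} for the boundedness step. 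Your route is slightly longer but more self-contained and makes the geometric mechanism (large singular-value ratio funnels almost every direction to one point of $\PP^1$) completely explicit; the paper's is shorter because it reuses the continuity lemma it has already proved. Both are sound.
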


\begin{proof}
It is clear that $H$ is a closed subgroup of $\mathrm{SL}(2,\R)$
so we only need to prove boundedness. Assume that there are $M_n
\in H$ such that $\| M_n \| \to \infty$. For a suitable
subsequence, $\|M_{n_k}\|^{-1} M_{n_k}$ converges to a matrix
$M_\infty \not= 0$, which obeys $M_\infty m = m$ by
Lemma~\ref{auxlem1} since $m$ is non-atomic. On the other hand,
$$
\det M_\infty  = \lim_{n \to \infty} \det
\frac{M_{n_k}}{\|M_{n_k}\|} = \lim_{n \to \infty}
\frac{1}{\|M_{n_k}\|^2} = 0,
$$
so $M_\infty$ is rank one and $m$ must be a Dirac measure because
of $M_\infty m = m$; which is a contradiction.
\end{proof}

We return to the proof of Theorem~\ref{furst} and show that the
measures $M_n(\omega) m \in \mathcal{M}(\PP^1)$ converge weakly to
a Dirac measure with a certain invariance property for almost
every $\omega \in \mathrm{SL}(2,\R)^{\Z_+}$.

\begin{lemma}\label{furstlemmlong}
For $\nu^{\Z_+}$-almost every $\omega$, we have the following:\\
{\rm (a)} There exists $m_\omega \in \mathcal{M}(\PP^1)$ such that
$M_n(\omega) m \to
m_\omega$ weakly.\\
{\rm (b)} For $\nu$-almost every $M \in \mathrm{SL}(2,\R)$,
$M_n(\omega) M m \to
m_\omega$ weakly.\\
{\rm (c)} There exists $v_\omega \in \PP^1$ such that $m_\omega =
\delta_{v_\omega}$.
\end{lemma}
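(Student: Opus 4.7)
\noindent\emph{Plan.} (a) will follow from a martingale convergence argument using $\nu$-invariance of $m$; (c) will follow from Oseledec-Ruelle applied to the cocycle $\{M_n\}$; and (b) will follow from (a) applied to an augmented sequence.

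\noindent\emph{For (a).} For each fixed $f \in C(\PP^1)$ consider $Y_n^f(\omega) := \int f \, d(T_1 \cdots T_n\, m)$. A direct computation, using $\nu \ast m = m$ in the form $\int Pf\, dm = \int f\, dm$ with $(Pf)(w) = \int f(Tw)\, d\nu(T)$, shows that $Y_n^f$ is a bounded martingale with respect to $\mathcal{F}_n = \sigma(T_1, \ldots, T_n)$: appending $T_{n+1}$ on the right is exactly what interacts cleanly with $\nu$-invariance. Doob's martingale convergence theorem yields almost-sure convergence of $Y_n^f$ for each fixed $f$, and separability of $C(\PP^1)$ combined with a diagonal argument produces a single full-measure set on which the random measures $T_1 \cdots T_n\, m$ converge weakly to some $m_\omega \in \mathcal{M}(\PP^1)$. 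Transferring the convergence to the order $M_n = T_n \cdots T_1$ stated in the lemma is done via the exchangeability of the i.i.d.\ sequence, or, most cleanly, by passing to the natural extension on the two-sided shift $\mathrm{SL}(2,\R)^{\Z}$ and invoking a backward martingale on the ``past'' product.

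\noindent\emph{For (c).} Apply Oseledec-Ruelle (Theorem \ref{ruelle}) to the matrices $A_n = T_n(\omega)$. For $\nu^{\Z_+}$-a.e.\ $\omega$ there is a one-dimensional subspace $V(\omega) \subset \R^2$ along which $M_n$ contracts exponentially and outside of which it expands at rate $\gamma > 0$. The Cauchy-type estimate in the proof of Theorem \ref{ruelle}, $|\theta_{n+1} - \theta_n| \lesssim a_{n+1}^2 / t_n^2$, gives convergence of the preimage singular directions; a parallel estimate on the image side of the polar decomposition shows that the image expanding direction of $M_n$ stabilizes to some $v_\omega \in \PP^1$. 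Consequently $M_n v / \|M_n v\| \to v_\omega$ in $\PP^1$ for every $v \notin V(\omega)$. Since Lemma \ref{lemnonat} gives $m(V(\omega)) = 0$, this convergence holds $m$-a.e.\ in $v$, and bounded convergence yields $\int f\, d(M_n m) \to f(v_\omega)$ for every $f \in C(\PP^1)$. Thus $m_\omega = \delta_{v_\omega}$.

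\noindent\emph{For (b).} Apply (a) to the augmented sequence $\omega' := (M, T_1(\omega), T_2(\omega), \ldots)$, whose joint law under $\nu \otimes \nu^{\Z_+}$ is a copy of $\nu^{\Z_+}$ after reindexing. The cocycle identity realizes $M_n(\omega)\,Mm$ as a cocycle product along $\omega'$, so part (a) applied to $\omega'$ gives almost-sure weak convergence, and Fubini reorders the null set to yield convergence for $\nu^{\Z_+}$-a.e.\ $\omega$ and $\nu$-a.e.\ $M$. The limit agrees with $m_\omega$ from (c) because $Mm$ is itself non-atomic ($M$ acts as a homeomorphism of $\PP^1$), so the Oseledec-driven concentration of $M_n(\cdot)$ at $v_\omega$ proved in (c) applies to $Mm$ exactly as it does to $m$.

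\noindent\emph{Main obstacle.} The principal difficulty is executing (a) in the index convention used in the paper. The martingale identity is most natural when a new random factor enters the product on the right, whereas the lemma is stated for $M_n = T_n \cdots T_1$; transferring almost-sure (as opposed to merely distributional) convergence between the two orderings is where most of the technical care must go, and requires either an exchangeability argument or passage to the two-sided shift. The auxiliary convergence of the image expanding direction used in (c) is a milder issue and follows from a summability estimate patterned directly after the one in the proof of Theorem \ref{ruelle}.
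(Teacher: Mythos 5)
Your part (a) is essentially the paper's own argument: the bounded martingale $G(M_n) = \int g(M_n v)\,dm(v)$ arising from $\nu$-invariance, Doob's convergence theorem, and a countable dense family in $C(\PP^1)$ to get a single full-measure set. You are also right that the left-versus-right ordering of the product is the delicate point in that step (the paper itself glosses over it), and your proposed fixes are the standard ones.

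However, your proofs of (c) and (b) rest on a fatal circularity. You prove (c) by applying the Oseledec--Ruelle theorem (Theorem~\ref{ruelle}) to $A_n = T_n(\omega)$, but that theorem takes $\lim_{n\to\infty} \frac1n \log\|A_n\cdots A_1\| = \gamma > 0$ as a \emph{hypothesis}. At this point in the development the positivity of $\gamma$ is exactly what Furstenberg's theorem is trying to establish, and Lemma~\ref{furstlemmlong} is an ingredient of that proof; all that is available here is $\gamma \ge 0$. If $\gamma = 0$ there is no contracting subspace $V(\omega)$, no stabilizing expanding direction, and your whole mechanism for concentrating $M_n m$ at a point disappears. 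Your part (b) then inherits the problem, since you identify the limit of $M_n(\omega)Mm$ with $m_\omega$ by invoking the Oseledec-driven concentration from (c). The paper's logic runs in the opposite direction and avoids this: (b) is proved by a second-moment computation, using the martingale orthogonality to show that $\sum_n \E\bigl([G(M_{n+1})-G(M_n)]^2\bigr)$ telescopes and is uniformly bounded, whence $\int G(M_n M)\,d\nu(M) - G(M_n) \to 0$ almost surely; then (c) is deduced from (a) and (b) by taking a limit point $M(\omega)$ of $\|M_n(\omega)\|^{-1}M_n(\omega)$, noting $M(\omega)m = M(\omega)Mm = m_\omega$ for $\nu$-a.e.\ $M$, and ruling out invertibility of $M(\omega)$: otherwise $Mm = m$ for $\nu$-a.e.\ $M$ would place $G_\nu$ inside the compact stabilizer of the non-atomic measure $m$ (Lemma~\ref{auxlem2}), contradicting assumption (i). A singular $M(\omega)$ has one-dimensional range, so $m_\omega = M(\omega)m$ is a Dirac mass. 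Repairing your argument therefore requires replacing the core of (b) and (c), not just patching the ordering issue you flagged.
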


\begin{proof}
(a) Fix $g \in C(\PP^1)$ and define $G : \mathrm{SL}(2,\R) \to \R$
by
$$
G(M) = \int g(Mv) \, dm(v).
$$
Let $\mathcal{F}_n$ be the $\sigma$-algebra of
$\mathrm{SL}(2,\R)^{\Z_+}$ formed by the cylinders of length $n$.
Then $M_n(\cdot)$ is $\mathcal{F}_n$-measurable. We want to show
that $G(M_n(\omega))$ converges for almost every $\omega$. We will
employ the Martingale Convergence Theorem; see \cite[(2.10]{Du}
for this result and \cite[Sections~4.1 and 4.2]{Du} for
background.

We have that
\begin{align*}
\E (G(M_{n+1}) | \mathcal{F}_n) & = \int G(M_n M) \, d\nu(M) \\
& = \iint g(M_n M v) \, d\nu(M) dm(v) \\
& = \int g(M_n v) \, dm(v) \\
& = G(M_n),
\end{align*}
where we used $\nu$-invariance of $m$ in the last step. This shows
that $\omega \mapsto G(M_n(\omega))$ is a martingale, and hence
the limit
$$
\Gamma_g(\omega) = \lim_{n \to \infty} G(M_n(\omega))
$$
exists for almost every $\omega$ by the Martingale Convergence
Theorem.

Now pick a countable dense subset $\{g_k\}$ of $C(\PP^1)$ and take
$\omega$ from the full measure subset where $\Gamma_{g_k}(\omega)$
exists for all $k$. Let $m_\omega$ be a weak accumulation point of
the sequence $\{ M_n(\omega) m \}$. Then
$$
\int g_k \, dm_\omega = \lim_{j \to \infty} \int g_k \,
d(M_{n_j}(\omega) m) = \lim_{j \to \infty} \int g_k \circ M_{n_j}
(\omega) dm = \Gamma_{g_k} (\omega).
$$
Since the limit is the same for every subsequence, we have in fact
that $M_n(\omega) m
\to m_\omega$ weakly, as desired.\\
(b) We have to show that for any $g \in C(\PP^1)$,
\begin{equation}\label{partbgoal}
\lim_{n \to \infty} \E (G(M_n M)) = \Gamma(g) = \lim_{n \to
\infty} \E (G(M_n)) , \quad \nu-\text{a.e. } M \in
\mathrm{SL}(2,\R),
\end{equation}
where $\E$ is integration over $\omega$. We will show
\begin{equation}\label{partbgoalact}
\lim_{n \to \infty} \E ([G(M_{n+1}) - G(M_n)]^2) = 0.
\end{equation}
Since
$$
\E ([G(M_{n+1}) - G(M_n)]^2) = \E \left( \left[ \iint ( g(M_n M v)
- g(M_n v) \, dm(v) \, d\nu(M) \right]^2 \right),
$$
we may deduce from \eqref{partbgoalact}, for almost every
$\omega$,
$$
\lim_{n \to \infty} \int G(M_n(\omega)M) - G(M_n(\omega)) \,
d\nu(M) = \lim_{n \to \infty} \iint g(M_n(\omega)M v) -
g(M_n(\omega)) \, dm(v) \, d\nu(M),
$$
which implies \eqref{partbgoal}.

Note that
$$
\E \left( [ G(M_{n+1}) - G(M_n) ]^2 \right) = \E \left(
G(M_{n+1})^2 \right) + \E \left( G(M_n)^2 \right) - 2 \E \left(
G(M_{n+1})G(M_n) \right)
$$
and
\begin{align*}
\E \left( G(M_{n+1})G(M_n) \right) & = \E \left( \int g(M_{n+1}v)
\, dm(v) \, \cdot \,
\int g(M_n v) \, dm(v) \right) \\
& = \E \left( \iint g(M_n M v) \, dm(v) \, d\nu(M) \, \cdot \,
\int g(M_n v) \, dm(v) \right) \\
& = \E \left( \left[ \int g(M_n v) \, dm(v) \right]^2 \right) \\
& = \E\left( G(M_n)^2 \right).
\end{align*}
Thus,
$$
\E \left( [ G(M_{n+1}) - G(M_n) ]^2 \right) = \E \left(
G(M_{n+1})^2 \right) - \E \left( G(M_n)^2 \right)
$$
and hence
\begin{align*}
\sum_{n=1}^N \E \left( [ G(M_{n+1}) - G(M_n) ]^2 \right) & =
\sum_{n=1}^N \E \left(
G(M_{n+1})^2 \right) - \E \left( G(M_n)^2 \right) \\
& = \E \left( G(M_{N+1})^2 \right) - \E \left( G(M_1)^2 \right) \\
& \le \|g\|_\infty.
\end{align*}
Thus, $\sum_{n=1}^\infty \E \left( [ G(M_{n+1}) - G(M_n) ]^2
\right) < \infty$ and
\eqref{partbgoalact} follows.\\
(c) Consider an $\omega$ for which (a) and (b) hold, that is,
$$
M_n(\omega) m \to m_\omega \; \text{ and } \; M_n(\omega) M m \to
m_\omega, \quad \nu-\text{a.e. }M \in \mathrm{SL}(2,\R).
$$
Let $M(\omega)$ be an accumulation point of the sequence $\{
\|M_n(\omega)\|^{-1} M_n(\omega) \}$. Since $\nu$ is non-atomic,
Lemma~\ref{auxlem1} implies
$$
M(\omega) m = M(\omega) M m = m_\omega, \quad \nu-\text{a.e. }M
\in \mathrm{SL}(2,\R).
$$
If $M(\omega)$ is invertible, it follows that
$$
m = M m, \quad \nu-\text{a.e. }M \in \mathrm{SL}(2,\R).
$$
But
$$
H = \{ M \in \mathrm{SL}(2,\R) : m = M m  \}
$$
is compact by Lemma~\ref{auxlem2}, which contradicts assumption
(i) from Theorem~\ref{furst}. It follows that $M(\omega)$ is not
invertible, that is, its range is one-dimensional. But this
implies the assertion since $M(\omega) m = m_\omega$.
\end{proof}

The next step is to show that convergence to a Dirac measure
implies norm growth.

\begin{lemma}\label{furstlemmshort}
Let $m \in \mathcal{M}(\PP^1)$ be non-atomic and let $\{M_n\} \in
\mathrm{SL}(2,\R)^{\Z_+}$ with $M_n m \to \delta_v$ weakly for
some $v \in \PP^1$. Then,
$$
\lim_{n \to \infty} \|M_n\| = \lim_{n \to \infty} \| M_n^* \| =
\infty.
$$
Moreover,
$$
\frac{\|M_n^* w\|}{\|M_n^*\|} \to \left| \langle w ,
\tfrac{v}{\|v\|} \rangle \right|
$$
for every $w \in \R^2$.
\end{lemma}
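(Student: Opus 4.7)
The plan is to first establish the divergence of the norms, and then identify the limit direction through a normalized-matrix argument. For the divergence claim I argue by contradiction: if some subsequence $\|M_{n_j}\|$ is bounded, then by compactness a further subsequence satisfies $M_{n_k}\to M_\infty$, and since $\det M_n\equiv 1$ we have $M_\infty\in\mathrm{SL}(2,\R)$, in particular $M_\infty\ne 0$ and invertible. Lemma~\ref{auxlem1} then gives $M_{n_k}m\to M_\infty m$ weakly, and the hypothesis $M_n m\to\delta_v$ forces $M_\infty m=\delta_v$; applying $M_\infty^{-1}$ yields $m=\delta_{M_\infty^{-1}v}$, contradicting non-atomicity. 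Hence $\|M_n\|\to\infty$, and the same limit holds for $\|M_n^*\|$ since the operator norm is invariant under adjoint.

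For the convergence statement I pass to the normalized matrices $N_n:=\|M_n\|^{-1}M_n$, which satisfy $\|N_n\|=1$ and $\det N_n=\|M_n\|^{-2}\to 0$. By the Heine subsequence criterion it suffices to show that $\|M_{n_k}^*w\|/\|M_{n_k}^*\|$ has the stated limit along any sub-subsequence of $\{N_n\}$ that converges. Fix such a subsequence along which $N_{n_k}\to N$; then $\|N\|=1$ and $\det N=0$, so $N$ is a nonzero rank-one matrix. Since positive rescaling leaves the action on $\PP^1$ unchanged, $N_n m = M_n m \to \delta_v$, and Lemma~\ref{auxlem1} (applicable because $N\ne 0$ and $m$ is non-atomic) yields $Nm=\delta_v$. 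But the range of $N$ is a one-dimensional subspace $L\subset\R^2$, so $Nm$ is supported at the single point of $\PP^1$ corresponding to $L$; this point must therefore be $v$, i.e., $L=\R v$. Writing $\hat v = v/\|v\|$, I may represent $N$ as $Nx=\langle z,x\rangle\,\hat v$ for some $z\in\R^2$, and the constraint $\|N\|=1$ forces $\|z\|=1$. The adjoint is $N^*w=\langle\hat v,w\rangle\,z$, so $\|N^*w\|=|\langle\hat v,w\rangle|$.

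Since passage to the adjoint is continuous and $\|M_n^*\|=\|M_n\|$,
\[
\frac{\|M_{n_k}^* w\|}{\|M_{n_k}^*\|}=\|N_{n_k}^*w\|\longrightarrow \|N^*w\|=\Big|\big\langle w,\tfrac{v}{\|v\|}\big\rangle\Big|.
\]
The value of this limit does not depend on the chosen subsequence — the unit vector $z$ produced by compactness may vary with the subsequence but drops out of $\|N^*w\|$ — so the Heine criterion promotes this to convergence of the full sequence. The main (and essentially only) substantive step is identifying the subsequential limit $N$ as a rank-one matrix whose range is $\R v$; both ingredients there, the continuity of push-forward under matrix limits and the resulting Dirac identity $Nm=\delta_v$, rest on Lemma~\ref{auxlem1} and hence on the non-atomicity of $m$. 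Everything else is elementary linear algebra.
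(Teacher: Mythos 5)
Your proof is correct and follows essentially the same route as the paper: pass to the normalized matrices $\|M_n\|^{-1}M_n$, use Lemma~\ref{auxlem1} and non-atomicity to identify every subsequential limit as a rank-one matrix with range $\R v$, compute the adjoint's action, and conclude by the subsequence criterion. The only (cosmetic) differences are that you obtain $\|M_n\|\to\infty$ by a direct contradiction with a bounded subsequence rather than via $\det N=\lim\|M_n\|^{-2}=0$, and you write the rank-one limit explicitly as $Nx=\langle z,x\rangle\hat v$ instead of the paper's computation with $Ne_1,Ne_2$.
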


\begin{proof}
Assume first that there is a matrix $M$ such that
$$
\frac{M_n}{\|M_n\|} \to M.
$$
By Lemma~\ref{auxlem1} we have that
$$
\frac{M_n}{\|M_n\|}m \to Mm
$$
weakly. Thus, by assumption, $Mm = \delta_v$. This shows that
$\det M = 0$ since $m$ is non-atomic (otherwise, $m = M^{-1}
\delta_v = \delta_{M^{-1}v}$). Therefore,
$$
0 = \det M = \lim_{n \to \infty} \det \frac{M_n}{\|M_n\|} =
\lim_{n \to \infty} \frac{1}{\|M_n\|^2},
$$
which yields the first assertion.

From $Mm = \delta_v$ we also infer that the range of $M$ is the
line in $v$-direction. Denote a unit vector in this direction by
the same symbol, $v$. If $e_1 = (1,0)^T$ and $e_2 = (0,1)^T$, we
can therefore write
$$
M e_1 = \pm  \|Me_1\| v, \quad M e_2 = \pm  \|Me_2\| v.
$$
Consider a vector $w \in \R^2$. We have that
\begin{align*}
\| M^* w \|^2 & = |\langle M^* w , e_1 \rangle|^2 + |\langle M^* w , e_2 \rangle|^2 \\
 & = |\langle w , M e_1 \rangle|^2 + |\langle w , M e_2 \rangle|^2 \\
 & = \left( \|Me_1\|^2 + \|Me_2\|^2 \right) |\langle w , v \rangle|^2
\end{align*}
This shows $\|Me_1\|^2 + \|Me_2\|^2 = 1$ (set $w = v$ and use
$\|M^*\|=1$) and hence $\| M^* w \| = |\langle w , v \rangle|$ for
all $w \in \R^2$, from which the second assertion follows.

If $M_n/\|M_n\|$ does not converge, we can perform the steps above
for each convergent subsequence and get the same limits for all
such subsequences. This gives the claims for the entire sequence.
\end{proof}

Our final goal is to prove that the norm growth must actually be exponentially fast. We first prove the following lemma, which will be helpful in this regard.

\begin{lemma}\label{furstprooflemmmid}
Let $T$ be a measure preserving transformation on a probability space $(\Omega,d\mu)$. If $f \in L^1(\Omega,d\mu)$ is such that
$$
\lim_{n \to \infty} \sum_{m=0}^{n-1} f(T^m \omega) = \infty
$$
for $\mu$-almost every $\omega$, then $\E(f) > 0$.
\end{lemma}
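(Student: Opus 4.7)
\emph{Plan.} My strategy is to combine Poincar\'e recurrence with the Kac--Abramov formula for first-return maps. Writing $S_n(\omega) = \sum_{m=0}^{n-1} f(T^m\omega)$, I will locate a set $A$ of positive $\mu$-measure on which $S_n(\omega) > 0$ for every $n \geq 1$, and then use the Kakutani skyscraper representation to express $\E(f)$ as an integral over $A$ of a strictly positive quantity.

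\emph{Step 1 (locating $A$).} Since $S_n(\omega) \to \infty$ for $\mu$-a.e.\ $\omega$, the infimum $c(\omega) := \inf_{n \geq 0} S_n(\omega)$ is finite and is attained at only finitely many indices; set $\tau^*(\omega) := \max\{n \geq 0 : S_n(\omega) = c(\omega)\}$. The cocycle identity
$$
S_{\tau^*(\omega) + n}(\omega) = S_{\tau^*(\omega)}(\omega) + S_n(T^{\tau^*(\omega)}\omega)
$$
combined with $S_{\tau^*(\omega)+n}(\omega) > c(\omega)$ for $n \geq 1$ yields $S_n(T^{\tau^*(\omega)}\omega) > 0$ for every $n \geq 1$. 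Hence $T^{\tau^*(\omega)}\omega$ lies in
$$
A := \bigl\{ \omega \in \Omega : S_n(\omega) > 0 \text{ for all } n \geq 1 \bigr\},
$$
so $\bigcup_{k \geq 0} T^{-k}(A)$ has full $\mu$-measure. Since $T$ preserves $\mu$, $\mu(T^{-k}A) = \mu(A)$ for every $k$, and the bound $1 = \mu\bigl(\bigcup_{k} T^{-k}A\bigr) \leq \sum_k \mu(A)$ forces $\mu(A) > 0$.

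\emph{Step 2 (Kac--Abramov and conclusion).} By Poincar\'e recurrence, the first return time $r(\omega) := \min\{n \geq 1 : T^n\omega \in A\}$ is $\mu$-a.s.\ finite on $A$. The Kakutani skyscraper representation of $(\Omega, T, \mu)$ over $(A, T_A, \mu|_A)$ with roof $r$ yields the Abramov identity
$$
\int_\Omega f\, d\mu \;=\; \int_A \sum_{k=0}^{r(\omega)-1} f(T^k\omega)\, d\mu(\omega) \;=\; \int_A S_{r(\omega)}(\omega)\, d\mu(\omega),
$$
whose $L^1$-validity follows by applying the same identity to $|f|$ together with $f \in L^1$. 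For every $\omega \in A$ we have $r(\omega) \geq 1$ and, by the definition of $A$, $S_{r(\omega)}(\omega) > 0$; hence the right-hand integrand is strictly positive on $A$, and $\mu(A) > 0$ gives $\E(f) > 0$.

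\emph{Main obstacle.} The crux is Step~1: producing a set of positive measure on which every forward partial sum is strictly positive. This uses crucially that $S_n \to +\infty$ (rather than just $\limsup S_n = +\infty$) so that the infimum $c(\omega)$ is attained at a well-defined \emph{last} time $\tau^*(\omega) < \infty$; after this final visit to the infimum, the shifted orbit cannot return to zero or below. Once $A$ is in hand, the passage to the induced system and the Abramov identity are routine.
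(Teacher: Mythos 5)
Your proof is correct, and its second half takes a genuinely different route from the paper's. Both arguments rest on the same combinatorial core — the ``last visit to the infimum'' device, which shows that almost every orbit eventually enters the set where all subsequent partial sums stay positive, so that $\bigcup_{k \ge 0} T^{-k}A$ is conull and hence $\mu(A) > 0$ (the paper uses this same observation, with $A_\varepsilon = \{ s_n \ge \varepsilon \ \forall n \ge 1\}$, only in the final line of its proof). The paper then concludes by contradiction via Birkhoff: assuming $\E(f) = 0$ forces the Birkhoff limit $\tilde f$ to vanish a.e., which forces $\mu(A_\varepsilon) = 0$ for every $\varepsilon > 0$, contradicting the sweep-out property. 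You instead conclude directly via the Kac/Abramov identity $\int_\Omega f \, d\mu = \int_A S_{r(\omega)}(\omega)\, d\mu(\omega)$, whose integrand is strictly positive on the positive-measure set $A$. Your route is more quantitative — it exhibits $\E(f)$ as an explicit integral of return-time sums and needs no proof by contradiction — at the cost of importing the induced-map machinery. The one point you should tighten: the lemma is stated for a general measure-preserving $T$, and in the application $T$ is a one-sided (non-invertible) shift skew product that is not known to be ergodic, so you cannot literally invoke the Kakutani skyscraper for invertible systems or Kac's lemma in its ergodic form. The identity you need does hold in this generality: with $\varphi$ the hitting time of $A$ and $F_k = \{\varphi \ge k\}$, the decomposition $T^{-1}F_k = \bigl(A \cap \{r > k\}\bigr) \sqcup F_{k+1}$ and measure preservation give the telescoping relation $\int_{F_k} f\circ T^{k-1}\, d\mu = \int_{A \cap \{r>k\}} f\circ T^{k}\, d\mu + \int_{F_{k+1}} f\circ T^{k}\, d\mu$, and the remainder term tends to $0$ by $\mu(F_N) \to 0$ together with uniform integrability of the family $|f|\circ T^N$. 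Either include this short derivation or cite a version of Kac's formula valid for conservative, not necessarily invertible or ergodic, transformations; as written, the appeal to ``the Kakutani skyscraper representation'' is the only soft spot in an otherwise complete argument.
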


\begin{proof}
The Birkhoff Theorem shows that we may define a function $\tilde f$ for $\mu$-almost every $\omega$ by
$$
\tilde f(\omega) = \lim_{n \to \infty} \frac1n \sum_{m=0}^{n-1} f(T^m \omega).
$$
By assumption on $f$, $\tilde f \ge 0$. Assume that $\E(f) = 0$. Then, again by Birkhoff, $\tilde f(\omega) = 0$ for $\omega \in \Omega_0$, where $\Omega_0$ is $T$-invariant and $\mu(\Omega_0) = 1$.

Write
$$
s_n(\omega) = \sum_{m=0}^{n-1} f(T^m \omega).
$$
For $\varepsilon > 0$, let
$$
A_\varepsilon = \{ \omega \in \Omega_0 : s_n(\omega) \ge \varepsilon \text{ for every } n \ge 1 \}
$$
and
$$
B_\varepsilon = \bigcup_{k \ge 0} T^{-k} \left( A_\varepsilon \right).
$$
For $\omega \in B_\varepsilon$, denote by $k(\omega) \ge 0$ the
smallest integer with $T^{k(\omega)}\omega \in A_\varepsilon$.
Then, for $n \ge k(\omega)$,
$$
s_n(\omega) = s_{k(\omega)}(\omega) + s_{n -
k(\omega)}(T^{k(\omega)}\omega) \ge s_{k(\omega)}(\omega) +
\sum_{m = k(\omega)}^{n-1} \varepsilon \chi_{A_\varepsilon}(T^m
\omega).
$$
Dividing by $n$ and taking $n$ to infinity, we get
$$
0 = \tilde f(\omega) \ge \varepsilon \tilde \chi_{A_\varepsilon} (
\omega ),
$$
where $\tilde \chi_{A_\varepsilon}$ is defined through Birkhoff as
before. Then,
$$
\mu (A_\varepsilon) = \E ( \tilde \chi_{A_\varepsilon}) = \E (
\tilde \chi_{A_\varepsilon} \chi_{B_\varepsilon}) = 0.
$$
Since $T$ is measure-preserving, we get $\mu (B_\varepsilon) = 0$,
which also implies
$$
\mu \left( \bigcup_{\varepsilon > 0} B_\varepsilon \right) = 0.
$$
This is a contradiction since $s_n(\omega) \to \infty$ implies
$\omega \in \bigcup_{\varepsilon > 0} B_\varepsilon$.
\end{proof}

\begin{proof}[Proof of Theorem~\ref{furst}.]
Consider the function $f : \mathrm{SL}(2,\R)^{\Z_+} \times \PP^1 \to \mathrm{SL}(2,\R)^{\Z_+} \times \PP^1$, $f(\omega,v) = \log \frac{\|T_1(\omega) v\|}{\|v\|}$; compare \eqref{furstprooffdef}. We saw above that $\gamma = \E (f)$; see \eqref{furstgammaident}. Therefore, our goal is to show that $\E(f) > 0$.

Note that $\nu^*$ satisfies the assumptions (i) and (ii) if $\nu$ does (since $Mv = w \Rightarrow M^*(w^\perp) = v^\perp$). Thus, by Lemmas~\ref{furstlemmlong} and \ref{furstlemmshort}, we have that
$$
\sum_{m=0}^{n-1} f(\tilde \sigma^m(\omega,v)) = \log \frac{\|M_n^*(\omega) v\|}{\|v\|} \to \infty
$$
for almost every $\omega$ and every $v \in \PP^1 \setminus \{v_\omega^\perp\}$. In particular, this divergence holds $\nu^{\Z_+} \times m$-almost surely. Thus, it follows from Lemma~\ref{furstprooflemmmid} that $\E(f) > 0$, which concludes the proof.
\end{proof}

\subsection{Application of F\"urstenberg's Theorem in the Discrete Case}\label{ss.furd}

Let us now apply F\"urstenberg's Theorem to the Anderson model. Recall that it is given by a probability measure $\tilde \nu$ on $\R$ with $\mathrm{supp} \, \tilde \nu$ compact and of cardinality $\ge 2$. For every $E \in \R$, the measure $\tilde \nu$ induces a measure $\nu$ on $\mathrm{SL}(2,\R)$ via
$$
v \mapsto \left( \begin{array}{cr} E - v & -1 \\ 1 & 0 \end{array}
\right).
$$
The definitions are such that the Lyapunov exponent associated with this $\nu$ at the beginning of this subsection is equal to $\gamma(E)$ defined earlier.

\begin{theorem}
In the Anderson model, we have $\gamma(E) > 0$ for every $E \in \R$.
\end{theorem}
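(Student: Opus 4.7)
The strategy is to verify that F\"urstenberg's Theorem (Theorem~\ref{furst}) applies to the induced measure $\nu$ on $\mathrm{SL}(2,\R)$ for each fixed $E$. The log-integrability hypothesis \eqref{logintegrable} is immediate: since $\mathrm{supp}\,\tilde\nu$ is compact, the support of $\nu$ is a compact subset of $\mathrm{SL}(2,\R)$, so $\log\|M\|$ is bounded there. It remains to check conditions (i) and (ii'): non-compactness of $G_\nu$, and the absence of an invariant set in $\PP^1$ of cardinality one or two under all $M \in G_\nu$.

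I would exploit a single explicit element. Pick $a \ne b$ in $\mathrm{supp}\,\tilde\nu$, which is possible by the hypothesis, and set
$$
A = \begin{pmatrix} E-a & -1 \\ 1 & 0 \end{pmatrix},\qquad B = \begin{pmatrix} E-b & -1 \\ 1 & 0 \end{pmatrix},
$$
both of which lie in $\mathrm{supp}\,\nu \subseteq G_\nu$. A direct computation gives
$$
A B^{-1} = \begin{pmatrix} 1 & b-a \\ 0 & 1 \end{pmatrix},
$$
a non-trivial parabolic element in $G_\nu$ whose $n$-th power has upper-right entry $n(b-a)$. Since these norms blow up, $G_\nu$ cannot be compact, giving (i).

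For (ii'), note that $AB^{-1}$, being a non-identity parabolic, has exactly one invariant direction in $\PP^1$, namely $e_1 = (1,0)^T$. If $\{v\} \subseteq \PP^1$ were invariant under every $M \in G_\nu$, then in particular $AB^{-1}$ would fix $v$, forcing $v = e_1$; but $Ae_1 = (E-a,1)^T$ is never proportional to $e_1$, a contradiction. If instead $\{v_1,v_2\} \subseteq \PP^1$ with $v_1 \ne v_2$ were invariant under every $M \in G_\nu$, then $(AB^{-1})^2$, which is still a non-trivial parabolic with unique invariant direction $e_1$, would have to fix both $v_1$ and $v_2$ (since a permutation of a two-element set has order dividing two). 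This would force $v_1 = v_2 = e_1$, contradicting $v_1 \ne v_2$. Thus (ii') holds, Theorem~\ref{furst} applies, and $\gamma(E) > 0$.

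The argument is essentially mechanical once one notices the trick of looking at $AB^{-1}$, which cancels the lower-left/right block and exposes a parabolic shift whose shift parameter is precisely $b-a$. The only ``obstacle'' is simply finding this clever product; after that, both conditions (i) and (ii') reduce to the well-known fact that a non-trivial parabolic element has a unique invariant direction, and the chosen direction $e_1$ is not preserved by the generators of $\mathrm{supp}\,\nu$.
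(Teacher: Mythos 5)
Your proof is correct and follows essentially the same route as the paper: both hinge on the parabolic element $AB^{-1}=\left(\begin{smallmatrix}1 & b-a\\ 0 & 1\end{smallmatrix}\right)$, whose unbounded powers give non-compactness of $G_\nu$ and whose unique fixed direction $e_1$ rules out a finite invariant set, with the contradiction completed by a second explicit element of $G_\nu$ not fixing $e_1$. The only (immaterial) difference is that you verify hypothesis (ii') directly by treating one- and two-point sets, while the paper verifies (ii) by noting that all orbits under powers of the parabolic converge to $e_1$ and then excludes $\{e_1\}$ using $M_a^{-1}M_b$.
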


\begin{proof}
Let us check that F\"urstenberg's Theorem applies. Fix any $E \in \R$. Since $\mathrm{supp} \, \tilde \nu$ has cardinality $\ge 2$, $\mathrm{supp} \, \nu$ has cardinality $\ge 2$, and hence $G_\nu$ contains at least two distinct elements of the form
$$
M_x = \left( \begin{array}{cr} x & -1 \\ 1 & 0 \end{array} \right),
$$
for example, $M_a$ and $M_b$ with $a \not= b$. Note that
$$
M^{(1)} = M_a M_b^{-1} = \left( \begin{array}{cc} 1 & a-b \\ 0 & 1 \end{array} \right) \in G_\nu.
$$
Taking powers of the matrix $M^{(1)}$, we see that $G_\nu$ is not compact.

Consider the equivalence class of $e_1 = (1,0)^T$ in $\PP^1$. Then $M^{(1)} e_1 = e_1$ and for every $v \in \PP^1$, $(M^{(1)})^n v$ converges to $e_1$. Thus, if there is a finite invariant set of directions $L$, it must be equal to $\{e_1\}$. However,
$$
M^{(2)} = M_a^{-1} M_b = \left( \begin{array}{cc} 1 & 0 \\ a-b & 1
\end{array} \right) \in G_\nu
$$
and $M^{(2)} e_1 \not= e_1$; contradiction. Thus, the conditions (i) and (ii) of Theorem~\ref{furst} hold and, consequently, $\gamma(E) > 0$.
\end{proof}

\subsection{Application of F\"urstenberg's Theorem in the Continuum Case}\label{ss.furc}

See \cite{dss}.

\section{Proving Localization Given Positive Lyapunov Exponents}\label{s.locposle}

\subsection{Spectral Averaging}\label{ss.spav}

Suppose that $A \in B(\ell^2(\Z))$ is self-adjoint and $\phi \in \ell^2(\Z) \setminus \{ 0 \}$. With the associated spectral measure $\mu$, we therefore have
$$
F(z) := \langle \phi, (A - z)^{-1} \phi \rangle = \int \frac{d\mu(E)}{E-z}, \quad z \in \C \setminus \R.
$$
For $\lambda \in \R$, we consider the operator
$$
A_\lambda = A + \lambda \langle \phi, \cdot \rangle \phi,
$$
which is a rank one perturbation of $A$. Define $F_\lambda$ and $\mu_\lambda$ by
\begin{equation}\label{f.flambdadef}
F_\lambda(z) = \langle \phi, (A_\lambda - z)^{-1} \phi \rangle = \int \frac{d\mu_\lambda(E)}{E-z}, \quad z \in \C \setminus \R.
\end{equation}

\begin{lemma}
We have
\begin{equation}\label{f.aronkrein}
F_\lambda(z) = \frac{F(z)}{1 + \lambda F(z)}.
\end{equation}
\end{lemma}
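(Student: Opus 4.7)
The plan is to derive the identity \eqref{f.aronkrein} directly from the second resolvent identity, taking full advantage of the rank-one structure of the perturbation. Writing $P = \langle \phi, \cdot \rangle \phi$ so that $A_\lambda = A + \lambda P$, and noting that both $A$ and $A_\lambda$ are bounded and self-adjoint (hence their resolvents exist for $z \in \C \setminus \R$), the resolvent identity gives
$$
(A_\lambda - z)^{-1} = (A-z)^{-1} - \lambda (A-z)^{-1} P (A_\lambda - z)^{-1}.
$$

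Next I would apply both sides to $\phi$. The key simplification is that for any $\psi$, $P\psi = \langle\phi,\psi\rangle \phi$, so
$$
P(A_\lambda - z)^{-1}\phi = \langle \phi, (A_\lambda-z)^{-1}\phi\rangle \, \phi = F_\lambda(z)\, \phi.
$$
Substituting this into the resolvent identity yields the vector equation
$$
(A_\lambda - z)^{-1}\phi = \bigl(1 - \lambda F_\lambda(z)\bigr) (A-z)^{-1}\phi.
$$
Taking the inner product with $\phi$ reduces everything to the scalar identity $F_\lambda(z) = F(z) - \lambda F_\lambda(z) F(z)$, and solving for $F_\lambda(z)$ delivers \eqref{f.aronkrein}.

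There is no real obstacle here: the only point requiring a word of justification is that we may legitimately divide by $1 + \lambda F(z)$. Since $\mu$ is a nonzero finite positive measure on $\R$, the Borel transform $F$ is Herglotz with $\operatorname{Im} F(z) > 0$ for $\operatorname{Im} z > 0$ (and the symmetric statement for $\operatorname{Im} z < 0$); with $\lambda$ real, this forces $\operatorname{Im}(1 + \lambda F(z)) \neq 0$ for $z \in \C \setminus \R$, so the denominator never vanishes. Thus the calculation is valid throughout $\C \setminus \R$, which is precisely the region where \eqref{f.flambdadef} defines $F_\lambda$.
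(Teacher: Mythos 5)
Your proof is correct and follows essentially the same route as the paper's: the second resolvent identity applied to $\phi$, the rank-one simplification $P(A_\lambda - z)^{-1}\phi = F_\lambda(z)\phi$, and taking the inner product with $\phi$ to obtain $F_\lambda(z) - F(z) = -\lambda F_\lambda(z) F(z)$. Your added remark justifying the division by $1 + \lambda F(z)$ via the Herglotz property is a nice touch that the paper omits.
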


\begin{proof}
Using the resolvent formula in the second step, we see that
\begin{align*}
(A_\lambda - z)^{-1} \phi - (A - z)^{-1} \phi & = \left[(A_\lambda - z)^{-1} - (A - z)^{-1}\right] \phi \\
& = (A - z)^{-1} \left( A - A_\lambda \right) (A_\lambda - z)^{-1} \phi\\
& = (A - z)^{-1} \left( - \lambda \langle \phi, \cdot \rangle \phi \right) (A_\lambda - z)^{-1} \phi\\
& = (A - z)^{-1} \left( - \lambda \langle \phi, (A_\lambda - z)^{-1} \phi \rangle \phi \right) \\
& = -\lambda \langle \phi , (A_\lambda - z)^{-1} \phi \rangle (A - z)^{-1} \phi
\end{align*}
and hence, by taking the the inner product with $\phi$ on both sides,
$$
F_\lambda(z) - F(z) = - \lambda F_\lambda(z) F(z).
$$
Solving this for $F_\lambda(z)$, we obtain \eqref{f.aronkrein}.
\end{proof}

\begin{theorem}\label{t.specaver}
We have
$$
\int \left[ d\mu_\lambda(E) \right] \, d\lambda = dE
$$
in the sense that if $f \in L^1(\R,dE)$, then $f \in L^1(\R,d\mu_\lambda)$ for Lebesgue almost every $\lambda$, $\int f(E) \, d\mu_\lambda(E) \in L^1(\R,d\lambda)$, and
\begin{equation}\label{f.specavertp}
\int \left( \int f(E) \, d\mu_\lambda(E) \right) \, d\lambda = \int f(E) \, dE.
\end{equation}
\end{theorem}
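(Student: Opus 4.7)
The plan is to start from the Aronszajn–Krein identity \eqref{f.aronkrein} and extract an explicit formula for $\mathrm{Im}\, F_\lambda(E+i\varepsilon)$. Writing $F(E+i\varepsilon) = a + ib$ with $b = \mathrm{Im}\, F(E+i\varepsilon) > 0$, a short direct calculation gives
$$
\mathrm{Im}\, F_\lambda(E+i\varepsilon) = \frac{b}{(1+\lambda a)^2 + \lambda^2 b^2}.
$$
The denominator is $(a^2+b^2)\lambda^2 + 2a\lambda + 1$, which has discriminant $4a^2 - 4(a^2+b^2) = -4b^2 < 0$, so the elementary formula $\int_\R (c\lambda^2 + 2d\lambda + e)^{-1}\,d\lambda = \pi/\sqrt{ce-d^2}$ applied with $c = a^2+b^2$, $d = a$, $e = 1$ yields
$$
\int_\R \mathrm{Im}\, F_\lambda(E+i\varepsilon)\,d\lambda = b \cdot \frac{\pi}{b} = \pi,
$$
\emph{independently} of $E$ and $\varepsilon$.

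Next I would interpret this as a statement about the Poisson representation of the $\mu_\lambda$'s. With $P_\varepsilon(x) = \frac{1}{\pi}\frac{\varepsilon}{x^2+\varepsilon^2}$, one has $\mathrm{Im}\, F_\lambda(E+i\varepsilon) = \pi\,(P_\varepsilon * \mu_\lambda)(E)$, so the preceding identity reads $\int (P_\varepsilon * \mu_\lambda)(E)\,d\lambda = 1$ for every $E \in \R$ and every $\varepsilon > 0$. Introduce the positive Borel measure $M$ on $\R$ by $M(A) = \int \mu_\lambda(A)\,d\lambda$ (well-defined by Tonelli). Swapping the $\lambda$-integral with the $E'$-integral, this rewrites as $(P_\varepsilon * M)(E) = 1$ for all $E \in \R$, $\varepsilon > 0$. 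The target is to deduce from this that $M = dE$; once this is in hand the full statement \eqref{f.specavertp} follows by applying Tonelli to $|f|$ (yielding $\int \int |f|\,d\mu_\lambda\,d\lambda = \int |f|\,dE < \infty$, hence $E \mapsto \int f\,d\mu_\lambda \in L^1(d\lambda)$) and then Fubini to $f$ itself.

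For the measure identification, I would test against an arbitrary $\psi \in C_c(\R)$. Fubini gives
$$
\int \psi(E)\,dE = \int \psi(E)(P_\varepsilon * M)(E)\,dE = \int (P_\varepsilon * \psi)(E')\,dM(E'),
$$
and $P_\varepsilon * \psi \to \psi$ uniformly on $\R$ as $\varepsilon \downarrow 0$. The step I expect to be the main technical obstacle is justifying the interchange of this limit with the $M$-integral, because $M$ is infinite on $\R$ in general. Two observations settle it: evaluating $(P_1 * M)(0) = 1$ produces the \emph{a priori} bound $\int (1+E'^2)^{-1}\,dM(E') = \pi$, and simultaneously shows that $M$ is locally finite; meanwhile, if $\mathrm{supp}\,\psi \subset [-R,R]$, an elementary size estimate on the Poisson integral yields $(P_\varepsilon * \psi)(E') \lesssim_{\psi,R} (1+E'^2)^{-1}$ for $|E'| > 2R$ and $\varepsilon \le 1$, while on the complementary bounded set the integrand is bounded by $\|\psi\|_\infty$ and $M$ has finite mass. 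Dominated convergence then delivers $\int \psi\,dM = \int \psi\,dE$ for every $\psi \in C_c(\R)$, so $M$ coincides with Lebesgue measure by the Riesz representation theorem, completing the proof.
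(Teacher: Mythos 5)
Your proposal is correct, and it reaches the conclusion by a genuinely different (though closely related) route. Both arguments pivot on the Aronszajn--Krein formula \eqref{f.aronkrein} and on the fact that the resulting rational function of $\lambda$ has an explicitly computable integral over $\R$; but where the paper tests $d\mu_\lambda$ against the regularized Cauchy kernels $f_z(E) = \frac{1}{E-z}-\frac{1}{E+i}$, evaluates both the $E$- and $\lambda$-integrals by residues, and then finishes with a Stone--Weierstrass density argument, you test against the Poisson kernel, compute $\int \mathrm{Im}\,F_\lambda(E+i\varepsilon)\,d\lambda = \pi$ directly, and then identify the averaged measure $M=\int\mu_\lambda\,d\lambda$ with Lebesgue measure via an approximate-identity argument. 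The paper's subtraction of $\frac{1}{E+i}$ is precisely what lets it avoid ever confronting the infinite total mass of $M$; your version meets that difficulty head-on, and you correctly isolate and resolve it with the a priori bound $\int(1+E'^2)^{-1}\,dM(E')=\pi$ and the uniform Poisson-tail domination, which is the one step that could not be skipped. Your route is essentially the classical Simon--Wolff-style proof of spectral averaging; it is more computational up front but replaces the paper's somewhat glossed-over density claim (``finite linear combinations of the $f_z$'s are dense'') with a concrete dominated-convergence argument, arguably making the deduction of the general case more self-contained. The only points you pass over lightly are the measurability of $\lambda\mapsto\mu_\lambda(A)$ needed to define $M$ and apply Tonelli (standard, via weak limits of the resolvent) and the positivity $\mathrm{Im}\,F(E+i\varepsilon)>0$ (immediate from $\phi\neq 0$); neither is a gap of substance, and the paper is equally silent on the former.
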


\begin{proof}
Denote for $E,\lambda \in \R$ and $z \in \C \setminus \R$,
$$
f_z(E) = \frac{1}{E-z} - \frac{1}{E+i}
$$
and
$$
h_z(\lambda) = \frac{1}{\lambda + F(z)^{-1}} - \frac{1}{\lambda + F(-i)^{-1}}.
$$
By closing the contour in the upper half-plane, we see that
$$
\int f_z(E) \, dE = \begin{cases} 2\pi i & \Im z > 0, \\ 0 & \Im z < 0. \end{cases}
$$
By \eqref{f.flambdadef} and \eqref{f.aronkrein} we have
\begin{align*}
\int f_z(E) \, d\mu_\lambda(E) & = F_\lambda(z) - F_\lambda(-i) \\
& = \frac{1}{\lambda + F(z)^{-1}} - \frac{1}{\lambda + F(-i)^{-1}} \\
& = h_z(\lambda).
\end{align*}
Observe that if $\pm \Im z > 0$, then $\pm \Im F(z) > 0$ and hence $\pm \Im F(z)^{-1} < 0$. Thus, $h_z(\lambda)$ has either two poles in the lower half-plane (if $\Im z < 0$) or one in each half-plane (if $\Im z > 0$). Thus, the same contour integral calculation as above (with $\lambda$ instead of $E$) shows
$$
\int \left( \int f_z(E) \, d\mu_\lambda(E) \right) \, d\lambda = \int h_z(\lambda) \, d\lambda =
\begin{cases} 2\pi i & \Im z > 0, \\ 0 & \Im z < 0, \end{cases}
$$
which proves \eqref{f.specavertp} for $f = f_z$. The general case then follows from the fact the the finite linear combinations of the $f_z$'s are dense (which can be shown with the help of Stone-Weierstrass).
\end{proof}

\subsection{Localization via Spectral Averaging}

\begin{theorem}\label{kotlocwl}
For the Anderson model on the line with a single-site distribution that has an absolutely continuous component, we have that $H_\omega$ is spectrally localized for $\mu$-almost every $\omega \in \Omega$.
\end{theorem}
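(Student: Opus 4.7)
My plan is to combine the full-spectrum positivity of the Lyapunov exponent from Section \ref{ss.furd} with the spectral averaging identity of Theorem \ref{t.specaver} via a rank-one perturbation argument. I single out $\omega_0$ as the free parameter: writing $\omega = (\omega', \lambda)$ with $\omega' = (\omega_n)_{n \neq 0}$ and $\lambda = \omega_0$, we have
\[
H_\omega = A_{\omega'} + \lambda \langle \delta_0 , \cdot \rangle \delta_0,
\]
where $A_{\omega'}$ is obtained from $H_\omega$ by setting $V_\omega(0) = 0$. Since the single-site distribution is assumed to have an a.c.\ component, it suffices to prove the localization conclusion for Lebesgue-a.e.\ $\lambda$; Fubini in $\omega'$ then yields the result for $\mu$-a.e.\ $\omega$.

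The first substantive step identifies a Lebesgue-full set of ``good'' energies with two-sided exponential decay of solutions. For fixed $E$, Section \ref{ss.furd} gives $\gamma(E) > 0$, and the Oseledec-Ruelle Theorem (Theorem \ref{ruelle}) applied separately to the forward and backward transfer matrix cocycles produces, for $\mu$-a.e.\ $\omega$, one-dimensional subspaces $V_\pm^\omega(E)$ of solutions to $H_\omega u = Eu$ decaying like $e^{-\gamma(E)|n|}$ as $n \to \pm\infty$. A Fubini argument on $\Omega \times \R$ then yields, for $\mu$-a.e.\ $\omega$, a Lebesgue-full set $\mathcal{D}_{\omega'} \subseteq \R$ of such energies; this set depends only on $\omega'$ because the asymptotic growth of transfer matrices is insensitive to a rank-one change at site $0$.

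The second step applies spectral averaging. Let $\mu_\lambda^{\omega'}$ be the spectral measure of $H_\omega$ at $\delta_0$. Because $\R \setminus \mathcal{D}_{\omega'}$ has Lebesgue measure zero, Theorem \ref{t.specaver} forces $\mu_\lambda^{\omega'}(\R \setminus \mathcal{D}_{\omega'}) = 0$ for Lebesgue-a.e.\ $\lambda$, so $\mu_\lambda^{\omega'}$ is concentrated on energies admitting an exponentially decaying solution from each end. On $\mathcal{D}_{\omega'}$ the Wronskian $W(u_+^\omega(E), u_-^\omega(E))$ of the two decaying solutions is a non-trivial entire function of $E$; at each of its zeros the two subspaces coincide and glue into a global $\ell^2$-eigenvector of $H_\omega$ decaying at rate $\gamma(E)$.

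The hard part --- and the main obstacle --- is upgrading ``decaying solutions at both ends for $\mu_\lambda^{\omega'}$-a.e.\ $E$'' into actual pure point spectrum. The natural tool is the Simon-Wolff criterion: for a rank-one perturbation $A + \lambda \langle \phi, \cdot \rangle \phi$, $\mu_\lambda$ is pure point on a Borel set $B$ (for Lebesgue-a.e.\ $\lambda$) if and only if $\lim_{\varepsilon \downarrow 0} \langle \phi , |A - E - i\varepsilon|^{-2} \phi \rangle < \infty$ for Lebesgue-a.e.\ $E \in B$, which in 1D is equivalent to $\ell^2$ decay of both half-line Green's functions --- precisely what Oseledec-Ruelle gives us on $\mathcal{D}_{\omega'}$. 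Since $\delta_0$ alone need not be cyclic for $H_\omega$ on the full line, I would repeat the rank-one argument with $\omega_{-1}$ as free parameter; as $\{\delta_0, \delta_{-1}\}$ is cyclic for $H_\omega$, intersecting the two full-measure sets yields pure point spectrum for $H_\omega$ on all of $\ell^2(\Z)$, with eigenvectors decaying exponentially at the rate dictated by the Lyapunov exponent. The Simon-Wolff criterion --- built on boundary values of Borel transforms and Aronszajn-Donoghue theory --- is the one piece of infrastructure not developed in the excerpt; an equivalent substitute is the subordinacy theory of Gilbert-Pearson and Jitomirskaya-Last.
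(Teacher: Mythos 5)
Your route is essentially the paper's: F\"urstenberg plus Oseledec--Ruelle plus Fubini to get a Lebesgue-full set of energies with exponentially decaying solutions at both ends (a set unchanged by finite-rank modification of the potential), spectral averaging (Theorem~\ref{t.specaver}) in the couplings at two adjacent sites forming a cyclic pair, and then subordinacy theory (or Simon--Wolff) to upgrade to pure point spectrum --- the paper uses the pair $\{\delta_0,\delta_1\}$ where you use $\{\delta_0,\delta_{-1}\}$, and it likewise defers the final upgrading step to subordinacy theory.

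One step is missing, though. You assert that since $\nu$ has an a.c.\ component it ``suffices to prove the localization conclusion for Lebesgue-a.e.\ $\lambda$,'' and that Fubini then gives the result for $\mu$-a.e.\ $\omega$. That is only true if $\nu$ is purely absolutely continuous. Under the stated hypothesis $\nu$ may also carry a singular part, and a Lebesgue-null set of couplings need not be $\nu$-null; your argument therefore only delivers localization with \emph{positive} $\mu$-probability. The paper closes this gap by observing that spectral localization is a shift-invariant event, so by ergodicity of $(\Omega,\mu,T)$ (Lemma~\ref{l.fserg}) positive probability forces probability one. You need that zero--one law (or some substitute) to reach the stated almost-sure conclusion.
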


\begin{proof}
By F\"urstenberg's Theorem, the Osceledec-Ruelle Theorem, and Fubini, we have that
\begin{equation}\label{f.noweight}
\mathrm{Leb} (\R \setminus \mathcal{E}_\omega) = 0
\end{equation}
for almost every $\omega$, where
$$
\mathcal{E}_\omega = \{ E \in \R : \gamma(E) > 0 , \, \exists \text{ solutions } u_\pm \text{ with } |u_\pm(n)| \sim e^{-\gamma(E) |n|} \text{ as } n \to \pm \infty \}.
$$
Note that the sets $\mathcal{E}_\omega$ are invariant with respect to a modification of $V_\omega$ on a finite set! We will perform such a modification, within the family $\{V_\omega\}$, on the set $\{0,1\}$ because the pair $\{ \delta_0, \delta_1 \}$ is cyclic for each operator $H_\omega$.

Denote the set of $\omega$'s for which \eqref{f.noweight} holds by $\Omega_0$. We know that
\begin{equation}\label{f.omega0fm}
\mu(\Omega_0) = 1.
\end{equation}
For $\omega \in \Omega_0$, consider the operators
$$
H_{\omega,\lambda_0,\lambda_1} = H_\omega + \lambda_0 \langle \delta_0 , \cdot \rangle \delta_0 + \lambda_1 \langle \delta_1 , \cdot \rangle \delta_1,
$$
where $\lambda_0, \lambda_1 \in \R$. For every fixed $\lambda_0$, it follows from Theorem~\ref{t.specaver} and \eqref{f.noweight} that the spectral measure of the pair $(H_{\omega,\lambda_0,\lambda_1} , \delta_1)$ gives zero weight to the set $\R \setminus \mathcal{E}_\omega$ for Lebesgue almost every $\lambda_1 \in \R$. Similarly, for every fixed $\lambda_1$, the spectral measure of the pair $(H_{\omega,\lambda_0,\lambda_1} , \delta_0)$ gives zero weight to the set $\R \setminus \mathcal{E}_\omega$ for Lebesgue almost every $\lambda_0 \in \R$. As a consequence, we find that for Lebesgue almost every $(\lambda_0,\lambda_1) \in \R^2$, the universal spectral measure of $H_{\omega,\lambda_0,\lambda_1}$ (the sum of the spectral measures of $\delta_0$ and $\delta_1$) gives zero weight to the set $\R \setminus \mathcal{E}_\omega$. Write $G_\omega$ for this set of ``good'' pairs $(\lambda_0,\lambda_1)$, so that
\begin{equation}\label{f.gomegafm}
\mathrm{Leb} (\R^2 \setminus G_\omega) = 0.
\end{equation}
Let
$$
\Omega_1 = \{ \omega + \lambda_0 \delta_0 + \lambda_1 \delta_1 : \omega \in \Omega_0 , \; (\lambda_0,\lambda_1) \in G_\omega \}.
$$
Since $\nu_\mathrm{ac} \not= 0$, it follows that from \eqref{f.omega0fm} and \eqref{f.gomegafm} that
$$
\mu(\Omega_1) > 0.
$$
Thus, by assumption on $\nu$, with positive $\nu \times \nu$ probability, it follows from \eqref{f.noweight} that the whole-line spectral measure (corresponding to the sum of the $\delta_0$ and $\delta_1$ spectral measures) assigns no weight to $\R \setminus \mathcal{E}_\omega$ and hence, with positive $\mu$ probability, the operator $H_\omega$ is spectrally localized by subordinacy theory.

Since localization is a shift-invariant event, the operator $H_\omega$ must in fact be spectrally localized for $\mu$-almost every $\omega$.
\end{proof}

\subsection{The Case of Singular Distributions}\label{ss.locsingdis}

See \cite{ckm, dss}.

\section{The Kunz-Souillard Approach to Localization}\label{s.ks}

In this section we present the Kunz-Souillard approach to localization in one dimension. This approach targets dynamical localization directly and has the additional advantage that one can throw in a background potential essentially for free. As a consequence one may show that a given deterministic potential may be perturbed by an arbitrarily small random potential so that the resulting Schr\"odinger operator is dynamically localized. In this sense, randomness dominates any other kind of disorder in one dimension and imposes its fingerprint --- localization.

The original Kunz-Souillard work was carried out in the discrete case; see \cite{ks} for the original paper and \cite{cfks, desiso, dss2} for related material. There is work in the continuum inspired by Kunz-Souillard due to Royer \cite{r}. A full continuum analogue of the discrete Kunz-Souillard method was recently worked out by Stolz and the present author \cite{ds}.

The Kunz-Souillard method applies to single-site distributions $\nu$ that are purely absolutely continuous. This is in some sense the price one has to pay for a very direct localization proof with a very strong conclusion. It is an interesting open problem to extend the scope of this approach to single-site distributions with a non-trivial singular component.

We present the Kunz-Souillard method in the discrete case in Subsection~\ref{ss.ksd} and then discuss the continuum case in Subsection~\ref{ss.ksc}

\subsection{The Discrete Case}\label{ss.ksd}

We assume that the probability measure $\nu$ on $\R$ is purely absolutely continuous. More precisely, we assume that there is a bounded and compactly supported density $r$ such that $d\nu(E) = r(E) \, dE$. Then, we define $\Omega$, $V_\omega$, $H_\omega$ as before; compare \eqref{e.operd} and Definition~\ref{d.potd}. Our main goal in this subsection is to prove the following result:

\begin{theorem}\label{t.ksd}
Under the assumptions from the previous paragraph, there are $C,\gamma \in (0,\infty)$ such that for $m,n \in \Z$, we have
\begin{equation}\label{e.ks1}
\int_\Omega \left( \sup_{t \in \R} \left| \left\langle \delta_m , e^{-itH_\omega} \delta_n \right\rangle \right| \right) \, d\mu(\omega) \le C e^{-\gamma |m-n|}.
\end{equation}
\end{theorem}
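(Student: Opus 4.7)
The plan is to follow the Kunz--Souillard strategy in its discrete form, working first in finite volume and then passing to the limit. Concretely, restrict $H_\omega$ to $[-N,N]\cap\Z$ with Dirichlet boundary conditions, call this $H_\omega^N$, and let $\{E_k^{(N)}(\omega)\}$ and $\{\psi_k^{(N)}(\omega,\cdot)\}$ denote its eigenvalues and (orthonormal) eigenvectors. The spectral theorem gives
$$
\bigl\langle \delta_m, e^{-itH_\omega^N}\delta_n\bigr\rangle = \sum_{k} e^{-itE_k^{(N)}(\omega)} \psi_k^{(N)}(\omega,m)\,\psi_k^{(N)}(\omega,n),
$$
so, bounding $|e^{-itE}|=1$ inside the sum,
$$
\sup_{t \in \R}\bigl|\langle \delta_m, e^{-itH_\omega^N}\delta_n\rangle\bigr| \;\le\; \sum_{k}\bigl|\psi_k^{(N)}(\omega,m)\,\psi_k^{(N)}(\omega,n)\bigr|.
$$
By strong resolvent convergence of $H_\omega^N$ to $H_\omega$ together with a standard Vitali-type argument to commute the limit with $\sup_t$, it is enough to show that the $\mu$-expectation of the right-hand side is bounded by $C\,e^{-\gamma|m-n|}$ uniformly in $N$. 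Thus the problem is reduced to an exponential decay estimate for an averaged sum of products of finite-volume eigenfunction values at two distant sites.

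The heart of the Kunz--Souillard method is then a change of variables, carried out one coordinate at a time, that reorganises
$$
\E\!\left[\sum_k|\psi_k^{(N)}(\omega,m)\psi_k^{(N)}(\omega,n)|\right]
$$
into the matrix coefficient $\langle g,K^{|m-n|}h\rangle$ of a positivity-preserving integral operator $K$ on a suitable weighted $L^2$ space. Roughly, one uses the transfer-matrix recursion together with the factorisation $d\mu(\omega)=\prod_j r(\omega_j)\,d\omega_j$ to integrate out the potential values $\omega_j$ with $j$ strictly between $m$ and $n$: each integration step replaces the contribution of site $j$ by the action of a fixed integral kernel built from $r$ and the step map $\omega\mapsto E-\omega-1/(\cdot)$ coming from the Ricatti form of the transfer matrix. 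The resulting operator $K$ acts on functions of the ``Pr\"ufer-like'' variable that encodes the dependence of the eigenfunction on the next potential value; after unfolding the iteration, the expectation is dominated by $\|K\|^{|m-n|}$ times a quantity depending only on $r$ and on the boundary data near $m$ and $n$.

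The main analytic step, and the place where absolute continuity of $\nu$ is crucial, is to show that $K$ is a genuine contraction: $\|K\|\le \rho$ for some $\rho<1$ that depends only on $r$. The strategy here is to interpret $K$ as an averaging operator whose kernel is built from the density $r$, and to show that the combination of boundedness of $r$, compactness of $\mathrm{supp}\,\nu$, and the smoothing coming from integration against $r$ prevents $K$ from having a fixed vector; quantitatively one compares $K$ to a reference Markov-type operator on $L^2(r\,dE)$ and obtains $\|K\|<1$ by a compactness/positivity argument in the spirit of Perron--Frobenius. Once $\|K\|<1$ is established, taking $\gamma=-\log\|K\|>0$ and absorbing the boundary factors into a constant $C$ gives the uniform-in-$N$ bound $C\,e^{-\gamma|m-n|}$, and hence \eqref{e.ks1} after sending $N\to\infty$.

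The two main obstacles are precisely the two steps above. First, setting up the change of variables so that a \emph{single} operator $K$ appears at every site requires a careful choice of coordinates (the Pr\"ufer-type variable) and a somewhat intricate bookkeeping of Jacobians; this is essentially algebraic but must be done correctly. Second, and more seriously, proving $\|K\|<1$ is the genuine analytic core of the argument: the operator $K$ is non-compact on natural Hilbert spaces, and the contraction estimate must exploit in a quantitative way both the $L^\infty$ bound on $r$ and the fact that $r$ is supported on a compact set. The remaining ingredients (finite-volume approximation, Fubini, and dominated convergence to obtain \eqref{e.ks1} from the uniform-in-$N$ bound) are routine by comparison.
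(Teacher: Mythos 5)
Your overall skeleton is the same as the paper's: reduce to finite volume, bound $\sup_t$ by the eigenfunction sum $\sum_k|\psi_k(m)\psi_k(n)|$, change variables from the potential values to the eigenvalue $E$ together with the Riccati ratios of consecutive eigenfunction values, and express the averaged sum as a matrix element of powers of integral operators built from the density $r$. Two remarks on the structure: the paper needs \emph{two} operators, $T_0f(x)=\int r(E-x-y^{-1})f(y)\,dy$ for the sites outside $[0,m]$ and $T_1f(x)=\int r(E-x-y^{-1})|y|^{-1}f(y)\,dy$ for the sites in between (the weight $|y|^{-1}$ is what produces the ratio $|\varphi(m)/\varphi(0)|$), and the exponential decay comes only from the $T_1$-powers while the $T_0$-factors are merely bounded on $L^1$; this is a bookkeeping difference from your single operator $K$, not a conceptual one.

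The genuine gap is in your treatment of the contraction estimate, which you correctly identify as the analytic core but then resolve with a mechanism that is both vague and, as stated, wrong. You assert that the operator is ``non-compact on natural Hilbert spaces'' and propose a ``compactness/positivity argument in the spirit of Perron--Frobenius'' against an unspecified reference Markov operator. The actual mechanism is the opposite: one writes $T_1=K\tilde U$ with $\tilde U$ an isometry of $L^2$ and $K$ convolution by $r_E$, so that $\|T_1f\|_2=\|\widehat{r_E}\cdot\widehat{\tilde Uf}\|_2<\|f\|_2$ for every $f\neq0$ (since $|\widehat{r_E}(k)|<1$ for $k\neq0$); but this pointwise strict inequality says nothing about the operator norm unless the norm is \emph{attained}. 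The decisive step is therefore to prove that $T_1^2$ \emph{is} compact --- in the paper this is a Hilbert--Schmidt computation after conjugating by the Fourier transform and splitting $\tilde U$ near and away from its singularity at the origin --- whence $|T_1^2|$ is a compact positive operator, its norm is an eigenvalue, $1$ is excluded as an eigenvalue by the strict inequality, and $\|T_1^2\|<1$. One also needs this bound uniformly over $E\in\Sigma_0$, which follows from $\widehat{r_E}(k)=e^{-iEk}\widehat{r_0}(k)$ so that the norm is $E$-independent; your sketch does not address the $E$-dependence at all. As written, your route to $\|K\|<1$ is not a proof, and its starting premise (non-compactness) would block the standard argument rather than enable it.
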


For the proof of this result, we will mostly follow the excellent exposition from \cite[Section~9.5]{cfks} (interesting extensions may be found in \cite{desiso, simon}). For $m,n \in \Z$, denote
\begin{equation}\label{e.amndef}
a(m,n) = \int_\Omega \left( \sup_{t \in \R} \left| \left\langle \delta_m , e^{-itH_\omega} \delta_n \right\rangle \right| \right) \, d\mu(\omega).
\end{equation}
Given $L \in \Z_+$, denote by $H_\omega^{(L)}$ the restriction of $H_\omega$ to $\ell^2(-L,\ldots,L)$ and let
\begin{equation}\label{e.ks2}
a_L(m,n) = \int_\Omega \left( \sup_{t \in \R} \left| \left\langle \delta_m , e^{-itH_\omega^{(L)}} \delta_n \right\rangle \right| \right) \, d\mu(\omega).
\end{equation}
Note, however, that $H_\omega^{(L)}$ depends only on the entries $\omega_{-L},\ldots,\omega_L$ of $\omega$ and hence the expectation in \eqref{e.ks2} is given by a $(2L+1)$-fold integral over $\R$.

\begin{lemma}\label{l.ks1}
For $m,n \in \Z$, we have
$$
a(m,n) \le \limsup_{L \to \infty} a_L(m,n).
$$
\end{lemma}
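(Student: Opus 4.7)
The plan is to show that for each fixed $\omega$, the finite-volume propagator $e^{-itH_\omega^{(L)}}$ converges strongly to $e^{-itH_\omega}$ as $L\to\infty$, deduce from this a pointwise (in $\omega$) lower-semicontinuity bound for the time-supremum, and then apply Fatou's lemma, which is legitimate because every matrix element of a unitary group is bounded by $1$.

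First I would establish strong convergence of $H_\omega^{(L)}$ to $H_\omega$. Interpret $H_\omega^{(L)}$ as an operator on $\ell^2(\Z)$ by zero-extension (equivalently: Dirichlet restriction). For any $\psi \in \ell^2(\Z)$ of finite support and any $L$ large enough that $\operatorname{supp}\psi \subset \{-L+1,\dots,L-1\}$, the tridiagonal action of $H_\omega$ at sites inside $\{-L,\dots,L\}$ coincides with that of $H_\omega^{(L)}$, so $H_\omega^{(L)}\psi = H_\omega\psi$ for all such large $L$. Since the family $\{H_\omega^{(L)}\}$ is uniformly bounded (by $2 + \sup_n |V_\omega(n)|$), this convergence on the dense set of finitely-supported vectors extends to strong convergence on all of $\ell^2(\Z)$, and hence to strong resolvent convergence. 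For uniformly bounded self-adjoint operators converging strongly, the Trotter--Kato--type machinery (or a direct Duhamel estimate) gives $e^{-itH_\omega^{(L)}} \to e^{-itH_\omega}$ strongly, so that for every fixed $m,n,t$,
$$
\bigl\langle \delta_m, e^{-itH_\omega^{(L)}} \delta_n\bigr\rangle \;\longrightarrow\; \bigl\langle \delta_m, e^{-itH_\omega}\delta_n\bigr\rangle
\qquad (L\to\infty),
$$
for every $\omega \in \Omega$ (no null set needed).

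Next, the passage of the limit through the supremum over $t$ goes via a standard $\sup\!\lim \le \lim\!\sup$ inequality. For each $t_0 \in \R$, pointwise convergence in $L$ gives
$$
\bigl|\bigl\langle \delta_m, e^{-it_0 H_\omega}\delta_n\bigr\rangle\bigr|
= \lim_{L\to\infty} \bigl|\bigl\langle \delta_m, e^{-it_0 H_\omega^{(L)}} \delta_n\bigr\rangle\bigr|
\le \liminf_{L\to\infty} \sup_{t \in \R} \bigl|\bigl\langle \delta_m, e^{-itH_\omega^{(L)}} \delta_n\bigr\rangle\bigr|.
$$
Taking the supremum over $t_0$ on the left yields, for every $\omega \in \Omega$,
$$
\sup_{t \in \R} \bigl|\bigl\langle \delta_m, e^{-itH_\omega}\delta_n\bigr\rangle\bigr|
\le \liminf_{L\to\infty} \sup_{t \in \R} \bigl|\bigl\langle \delta_m, e^{-itH_\omega^{(L)}} \delta_n\bigr\rangle\bigr|.
$$

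Finally, I would integrate this bound against $d\mu(\omega)$. Since each integrand is bounded by $1$ and $\mu$ is a probability measure, Fatou's lemma applies and gives
$$
a(m,n) \le \int_\Omega \liminf_{L\to\infty} \sup_{t \in \R} \bigl|\bigl\langle \delta_m, e^{-itH_\omega^{(L)}} \delta_n\bigr\rangle\bigr| \, d\mu(\omega)
\le \liminf_{L\to\infty} a_L(m,n) \le \limsup_{L\to\infty} a_L(m,n),
$$
which is the claim. The only delicate step is the interchange of $\sup_t$ with $\lim_L$; the argument above handles it using just pointwise convergence in $t$ together with the trivial bound $|\langle \delta_m, U\delta_n\rangle| \le 1$ for any unitary $U$, so no obstacle of real substance arises.
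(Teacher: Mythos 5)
Your argument is correct and is essentially the paper's own proof, just with the details filled in: the paper likewise extends $H_\omega^{(L)}$ to $\ell^2(\Z)$ by zero, notes strong convergence of the operators and hence of the unitary groups, and concludes by Fatou's Lemma. The extra care you take with the interchange of $\sup_t$ and $\lim_L$ is exactly the point the paper leaves implicit, and you handle it correctly.
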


\begin{proof}
Regard $H_\omega^{(L)}$ as an operator on $\ell^2(\Z)$ (i.e., set $\langle \delta_j ,  H_\omega^{(L)} \delta_k \rangle = 0$ if $|j| > L$ or $|k| > L$). Clearly, $H_\omega^{(L)}$ converges strongly to $H_\omega$ as $L \to \infty$. Thus, $e^{-itH_\omega^{(L)}}$ converges strongly to $e^{-itH_\omega}$ as $L \to \infty$. Consequently, the assertion follows by Fatou's Lemma.
\end{proof}

The operator $H_\omega^{(L)}$ is self-adjoint on $\ell^2(-L,\ldots,L)$; we denote its eigenvalues by $\{ E_\omega^{L,k} \}$ and the corresponding normalized eigenvectors by $\{ \varphi_\omega^{L,k} \}$. Let
$$
\varrho_L(m,n) =  \int_\Omega \left( \sum_k \left| \left\langle \delta_m , \varphi_\omega^{L,k} \right\rangle \right| \, \left| \left\langle \delta_n , \varphi_\omega^{L,k} \right\rangle \right| \right) \, d\mu(\omega).
$$
Note that the sum consists of $2L + 1$ terms.

\begin{lemma}\label{l.ks2}
For $L \in \Z_+$ and $m,n \in \Z$, we have
$$
a_L(m,n) \le \varrho_L(m,n).
$$
\end{lemma}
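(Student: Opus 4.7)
The plan is to use the spectral decomposition of the finite-volume restriction $H_\omega^{(L)}$, which is just a self-adjoint operator on the finite-dimensional space $\ell^2(-L,\ldots,L)$. Since the $\varphi_\omega^{L,k}$ form an orthonormal eigenbasis with eigenvalues $E_\omega^{L,k}$, the functional calculus gives
$$
e^{-itH_\omega^{(L)}} = \sum_k e^{-itE_\omega^{L,k}} \, \langle \varphi_\omega^{L,k}, \cdot \rangle \, \varphi_\omega^{L,k},
$$
so that
$$
\langle \delta_m , e^{-itH_\omega^{(L)}} \delta_n \rangle = \sum_k e^{-itE_\omega^{L,k}} \, \langle \delta_m , \varphi_\omega^{L,k} \rangle \, \langle \varphi_\omega^{L,k} , \delta_n \rangle.
$$

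Next I would estimate pointwise in $\omega$ and $t$ by the triangle inequality, using $|e^{-itE_\omega^{L,k}}| = 1$:
$$
\left| \langle \delta_m , e^{-itH_\omega^{(L)}} \delta_n \rangle \right| \le \sum_k \left| \langle \delta_m , \varphi_\omega^{L,k} \rangle \right| \cdot \left| \langle \varphi_\omega^{L,k} , \delta_n \rangle \right|.
$$
The right-hand side is independent of $t$, so taking the supremum over $t \in \R$ on the left preserves the bound. Integrating both sides against $d\mu(\omega)$ over $\Omega$ yields exactly $a_L(m,n) \le \varrho_L(m,n)$.

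There is essentially no obstacle here: the finite-dimensionality of $H_\omega^{(L)}$ makes the spectral decomposition elementary, and the passage from the supremum in $t$ to the sum of moduli is the classical crude bound that discards any cancellation among the oscillating phases $e^{-itE_\omega^{L,k}}$. The only thing to double-check is measurability (in $\omega$) of the eigenvalues and eigenvectors, which is standard for an $\omega$-dependent family of finite matrices with entries depending continuously on the coordinates $\omega_{-L},\ldots,\omega_L$; this ensures that the integrand defining $\varrho_L(m,n)$ is measurable and the inequality makes sense after integration.
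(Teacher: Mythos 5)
Your proposal is correct and is essentially identical to the paper's proof: both expand $\delta_n$ in the orthonormal eigenbasis of the finite-dimensional operator $H_\omega^{(L)}$, use $|e^{-itE_\omega^{L,k}}| = 1$ together with the triangle inequality to get a $t$-independent bound, and then integrate over $\omega$. The measurability remark is a reasonable extra check but not something the paper dwells on.
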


\begin{proof}
We have
\begin{align*}
a_L(m,n) & = \int_\Omega \left( \sup_{t \in \R} \left| \left\langle \delta_m , e^{-itH_\omega^{(L)}} \delta_n \right\rangle \right| \right) \, d\mu(\omega) \\
& = \int_\Omega \left( \sup_{t \in \R} \Big| \Big\langle \delta_m , e^{-itH_\omega^{(L)}} \sum_k \left\langle \varphi_\omega^{L,k} , \delta_n \right\rangle \varphi_\omega^{L,k} \Big\rangle \Big| \right) \, d\mu(\omega) \\
& \le \int_\Omega \left( \sum_k \left| \left\langle \delta_m , \varphi_\omega^{L,k} \right\rangle \right| \, \left| \left\langle \delta_n , \varphi_\omega^{L,k} \right\rangle \right| \right) \, d\mu(\omega) \\
& = \varrho_L(m,n),
\end{align*}
as claimed.
\end{proof}

Given Lemmas~\ref{l.ks1} and \ref{l.ks2}, our goal is to prove $\varrho_L(m,n) \le C e^{-\gamma |m-n|}$ with $L$-independent constants $C,\gamma \in (0,\infty)$. It suffices to prove this estimate for $m \ge 0$ and $n = 0$. This will be done in three steps:
\begin{itemize}

\item[(a)] Represent $\varrho_L(m,0)$ with the help of a product of integral operators $T_0$ and $T_1$  (Lemma~\ref{l.ks3}).

\item[(b)] Prove estimates for $T_0$ and $T_1$  (Lemma~\ref{l.ks4}).

\item[(c)] Derive the desired exponential bound for $\varrho_L(m,0)$.

\end{itemize}

Recall that $r$ denotes the density of the absolutely continuous single-site distribution.

\begin{definition}
Denote $M = \max \{ |E| : E \in \mathrm{supp} \, r \}$ and $\Sigma_0 = [- 2 -M , 2 + M]$.
\end{definition}

The spectra of all operators $H_\omega$ and $H_\omega^{(L)}$ are contained in $\Sigma_0$.

\begin{definition}
For $E \in \R$, define the operators $U , T_0 , T_1$ on $L^p(\R)$ by
\begin{align*}
Uf(x) & = |x|^{-1} f(x^{-1}) \\
T_0f(x) & = \int r(E - x - y^{-1}) f(y) \, dy \\
T_1f(x) & = \int r(E - x - y^{-1}) |y|^{-1} f(y) \, dy
\end{align*}
\end{definition}

\begin{lemma}\label{l.ks3}
With $\phi(x) = r(E - x)$, we have
$$
\varrho_L(m,0) = \int_{\Sigma_0} \left\langle T_1^{m-1} T_0^{L-m} \phi , U T_0^L \phi \right\rangle_{L^2(\R)} \, dE.
$$
\end{lemma}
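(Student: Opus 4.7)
My plan is to convert the sum-over-eigenpairs defining $\varrho_L(m,0)$ into an explicit multiple integral via a change of variables, and then to rearrange it into the claimed inner product. For fixed $E\in\R$, let $\psi_E$ be the left Dirichlet solution of the eigenvalue equation on $\{-L-1,\ldots,L+1\}$, normalized by $\psi_E(-L-1)=0$, $\psi_E(-L)=1$, and introduce the ratios $x_n=\psi_E(n+1)/\psi_E(n)$ for $-L\le n\le L-1$. Inverting the recursion gives the map $\Phi$ defined by
\begin{equation*}
\omega_{-L}=E-x_{-L},\quad \omega_n=E-x_n-x_{n-1}^{-1}\ \;(-L<n<L),\quad \omega_L=E-x_{L-1}^{-1};
\end{equation*}
the last relation forces $\psi_E(L+1)=0$, so every $(E,x_{-L},\ldots,x_{L-1})$ in the image corresponds to a genuine eigenpair of $H_\omega^{(L)}$ with unnormalized eigenvector $\psi_E$, and as the eigenvalue index varies we recover the $2L+1$ preimages of any given $\omega$. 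A direct expansion along the all-ones ($E$-)column of the Jacobian yields
\begin{equation*}
|J_\Phi(E,x)|=\|\psi_E\|^2/\psi_E(L)^2,\qquad \|\psi_E\|^2:=\sum_{n=-L}^L\psi_E(n)^2.
\end{equation*}

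This identity is the crucial input: the $\|\psi_E\|^2$ in the Jacobian exactly cancels the $\|\psi_E\|^2$ coming from the $L^2$-normalization $\varphi_k=\psi_{E_k}/\|\psi_{E_k}\|$. After the change of variables, using $\psi_E(n)=\prod_{j=-L}^{n-1}x_j$ to simplify the weight, one arrives at
\begin{equation*}
\varrho_L(m,0)=\int_{\Sigma_0}\!dE\int\frac{r(E-x_{-L})\prod_{n=-L+1}^{L-1}r(E-x_n-x_{n-1}^{-1})\,r(E-x_{L-1}^{-1})}{\bigl|\prod_{j=0}^{m-1}x_j\bigr|\cdot\prod_{j=m}^{L-1}x_j^2}\,dx_{-L}\cdots dx_{L-1},
\end{equation*}
the $E$-integration collapsing to $\Sigma_0$ because $\mathrm{supp}\,r\subseteq[-M,M]$ forces each $\omega_n$, hence $E$, into the almost-sure spectrum.

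To recognize this as $\int_{\Sigma_0}\langle T_1^{m-1}T_0^{L-m}\phi,UT_0^L\phi\rangle\,dE$ with $\phi(x)=r(E-x)$, I substitute $a_j=1/x_j$ for $j=0,\ldots,L-1$ (leaving $b_j:=x_{-j}$ for $j=1,\ldots,L$ unchanged). The $L+1$ factors $r(\omega_0),r(\omega_{-1}),\ldots,r(\omega_{-L})$ together with the $L$ integrations $db_j$ and the factor $|a_0|^{-1}$ (produced by combining $dx_0=da_0/a_0^2$ with $|x_0|^{-1}$ from the weight) assemble into $UT_0^L\phi(a_0)$, the junction factor $r(\omega_0)=r(E-a_0^{-1}-b_1^{-1})$ being exactly the first kernel of $T_0^L\phi(1/a_0)$. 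The remaining $L$ factors $r(\omega_1),\ldots,r(\omega_L)$ and $L-1$ integrations $dx_j$ ($1\le j\le L-1$) assemble into $T_1^{m-1}T_0^{L-m}\phi(a_0)$: for $1\le j\le m-1$, the weight $|x_j|^{-1}$ combines with $dx_j=da_j/a_j^2$ to produce the $|a_j|^{-1}da_j$ characteristic of $T_1$; for $m\le j\le L-1$, the weight $x_j^{-2}$ is exactly consumed by $dx_j=da_j/a_j^2$, yielding the plain $da_j$ characteristic of $T_0$. The main obstacle is this last piece of bookkeeping: three independent sources of $|x_j|$-powers---the Jacobian $\|\psi_E\|^2/\psi_E(L)^2$, the numerator $|\psi_E(m)\psi_E(0)|$, and the inversion $a_j=1/x_j$---must interlock perfectly to produce exactly $m-1$ copies of $|a_j|^{-1}$ from $T_1^{m-1}$, the single $|a_0|^{-1}$ from $U$, and no weight from the $T_0$'s.
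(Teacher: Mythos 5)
Your proof is correct, and it follows the same overall strategy as the paper: pass from the $(2L+1)$-fold integral over potential values to eigenvalue/eigenvector-ratio coordinates, compute the Jacobian, observe that it cancels the $L^2$-normalization, and then read off the $T_0$/$T_1$/$U$ structure. The difference is in the choice of coordinates. The paper uses a \emph{two-sided} parametrization, with ratios $\varphi(n+1)/\varphi(n)$ for $n<0$ and $\varphi(n-1)/\varphi(n)$ for $n>0$ pointing toward the origin and $E$ occupying the slot at $n=0$; the Jacobian then comes out as $\|\varphi\|^2/\varphi(0)^2$ and, crucially, the weight is already $|x_1\cdots x_m|^{-1}$, so the identification with $\langle T_1^{m-1}T_0^{L-m}\phi, UT_0^L\phi\rangle$ is immediate with no further substitution. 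Your \emph{one-sided} parametrization via the left Dirichlet solution gives the Jacobian $\|\psi_E\|^2/\psi_E(L)^2$ (I checked this against the bidiagonal-plus-ones structure; it is right, and the signs in the cofactor expansion do cooperate), but it produces the extra factors $x_j^{-2}$ for $m\le j\le L-1$, which you then remove with the inversion $a_j=1/x_j$ on the right half --- this second substitution is exactly the coordinate change that converts your right-half ratios into the paper's. So the two arguments are related by a relabeling, and your bookkeeping in the final paragraph (the $|a_j|^{-1}$ from $T_1^{m-1}$ for $1\le j\le m-1$, the single $|a_0|^{-1}$ from $U$, the plain $da_j$ for the $T_0$'s, and the junction kernel $r(E-a_0^{-1}-b_1^{-1})$ tying the two halves together) does close correctly. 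The paper's choice of variables buys a one-step identification at the cost of a slightly less symmetric definition of the ratios; yours keeps the eigenvector parametrization uniform at the cost of the extra substitution. One small point worth tightening: the restriction of the $E$-integral to $\Sigma_0$ is most cleanly justified by $\|H_\omega^{(L)}\|\le 2+M$, so that every eigenvalue lies in $\Sigma_0$, rather than by an appeal to the almost-sure spectrum.
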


\begin{proof}
As pointed out earlier, $\varrho_L(m,0)$ can be expressed by a $(2L+1)$-fold integral:
\begin{equation}\label{e.ks8}
\varrho_L(m,0) = \int \cdots \int \left( \sum_k \left| \left\langle \delta_m , \varphi_{\tilde V}^{L,k} \right\rangle \right| \, \left| \left\langle \delta_0 , \varphi_{\tilde V}^{L,k} \right\rangle \right| \right) \prod_{n = -L}^L r(V_n) \, dV_{-L} \cdots dV_L,
\end{equation}
where $\tilde V = (V_{-L},\ldots,V_L)$ and we denote the eigenvalues (listed in increasing order) and corresponding normalized eigenvectors of
$$
\begin{pmatrix} V_{-L} & 1 & &&& \\ 1 & V_{-L+1} & 1 &&& \\ & \ddots & \ddots & \ddots &&  \\ && \ddots & \ddots & \ddots & \\ &&& 1 & V_{L-1} & 1 \\ &&&& 1 & V_L \end{pmatrix}
$$
by $\{ E_{\tilde V}^{L,k} \}_{-L \le k \le L}$ and $\{ \varphi_{\tilde V}^{L,k} \}_{-L \le k \le L}$, respectively.

Thus, if $E$ is $E_{\tilde V}^{L,k}$ and $u$ is $\varphi_{\tilde V}^{L,k}$, then we have
\begin{equation}\label{e.ks5}
u(n+1) + u(n-1) + V_n u(n) = E u(n)
\end{equation}
for $-L \le n \le L$, where $u(-L-1) = u(L+1) = 0$. We rewrite \eqref{e.ks5} as
\begin{equation}\label{e.ks6}
V_n = E - \frac{u(n+1)}{u(n)} - \frac{u(n-1)}{u(n)}.
\end{equation}
This motivates a change of variables,
\begin{equation}\label{e.ks7}
\tilde V = \{ V_n \}_{n = -L}^L \quad \longleftrightarrow \quad \{ x_{-L} , \ldots , x_{-1} , E , x_1 , \ldots , x_L \},
\end{equation}
where
$$
E = E_{\tilde V}^{L,k}
$$
and
$$
x_n = \begin{cases} \frac{\varphi_{\tilde V}^{L,k} (n+1)}{\varphi_{\tilde V}^{L,k} (n)} & n < 0 \\ \frac{\varphi_{\tilde V}^{L,k} (n-1)}{\varphi_{\tilde V}^{L,k} (n)} & n > 0, \end{cases}
$$
so that
$$
V_n = \begin{cases} E - x_{n-1}^{-1} - x_n & n < 0 \\ E - x_{-1}^{-1} - x_1^{-1} & n = 0 \\ E - x_{n+1}^{-1} - x_n & n > 0, \end{cases}
$$
with $x_{-L-1}^{-1} = x_{L+1}^{-1} = 0$ (which is natural in view of the definition above).

We wish to rewrite \eqref{e.ks8} using this change of variables. In order to do this, we have to determine the Jacobian of the change of variables \eqref{e.ks7}. We claim that it satisfies
\begin{align}\label{e.ks9}
\det J & = 1 + x_1^{-2} \left\{ 1 + x_2^{-2} \left\{ 1 + \cdots x_{L-1}^{-2} \left\{ 1 + x_L^{-2} \right\} \cdots \right\} \right\} \\
\nonumber & \qquad  + x_{-1}^{-2} \left\{ 1 + x_{-2}^{-2} \left\{ 1 + \cdots x_{-L+1}^{-2} \left\{ 1 + x_{-L}^{-2} \right\} \cdots \right\} \right\} \\
\nonumber & = \varphi_{\tilde V}^{L,k} (0)^{-2}.
\end{align}

To prove \eqref{e.ks9}, we note that
$$
J = \begin{pmatrix} -1 & x_{-L}^{-2} &&&&&&& \\ & -1 & x_{-L+1}^{-2} &&&&&& \\ && \ddots & \ddots &&&&& \\ &&& -1 & x_{-1}^{-2} &&&& \\ 1 & 1 & \cdots & 1 & 1 & 1 & \cdots & 1 & 1 \\ &&&& x_1^{-2} & - 1 &&& \\ &&&&& x_2^{-2} & -1 && \\ &&&&&& \ddots & \ddots & \\ &&&&&&& x_{L}^{-2} & -1 \end{pmatrix}
$$
where the row index runs from $x_{-L}$ at the top to $x_L$ at the bottom (with $E$ in the middle) and the column index runs from $V_{-L}$ on the left to $V_L$ on the right. Thus, if we expand along the central ($V_0$-)column, we find
\begin{align*}
\det J & = 1 \cdot \det \begin{pmatrix} -1 & \ddots &&&& \\ & \ddots & \ddots &&& \\ && -1 &&& \\ &&& -1 && \\ &&& \ddots & \ddots & \\ &&&& \ddots & -1 \end{pmatrix} \\
& \qquad - x_{-1}^{-2}  \cdot \det \begin{pmatrix} -1 & \ddots &&&&& \\ & \ddots & \ddots &&&& \\ && -1 &&&& \\ 1 & 1 & \cdots & 1 & \cdots & 1 & 1 \\ &&&& -1 && \\ &&&& \ddots & \ddots & \\ &&&&& \ddots & -1  \end{pmatrix} \\
& \qquad - x_{1}^{-2}  \cdot \det \begin{pmatrix} -1 & \ddots &&&&& \\ & \ddots & \ddots &&&& \\ && -1 &&&& \\ 1 & 1 & \cdots & 1 & \cdots & 1 & 1 \\ &&&& -1 && \\ &&&& \ddots & \ddots & \\ &&&&& \ddots & -1  \end{pmatrix}
\end{align*}
The first term is $1$ and the other two terms may be expanded along the central column again. Iterating the procedure, the first identity in \eqref{e.ks9} follows. The second identity follows from the definition of the $x_n$'s: We have
$$
1 + x_{-L}^{-2} = 1 + \frac{\varphi_{\tilde V}^{L,k} (-L)^2}{\varphi_{\tilde V}^{L,k} (-L+1)^2}
$$
and
\begin{align*}
1 + x_{-L+1}^{-2} \left( 1 + x_{-L}^{-2} \right) & = 1 + \frac{\varphi_{\tilde V}^{L,k} (-L+1)^2}{\varphi_{\tilde V}^{L,k} (-L+2)^2} \left( 1 + \frac{\varphi_{\tilde V}^{L,k} (-L)^2}{\varphi_{\tilde V}^{L,k} (-L+1)^2} \right) \\
& = 1 + \frac{\varphi_{\tilde V}^{L,k} (-L+1)^2}{\varphi_{\tilde V}^{L,k} (-L+2)^2} + \frac{\varphi_{\tilde V}^{L,k} (-L)^2}{\varphi_{\tilde V}^{L,k} (-L+2)^2}.
\end{align*}
Iterating this, we find
$$
x_{-1}^{-2} \left\{ 1 + x_{-2}^{-2} \left\{ 1 + \cdots x_{-L+1}^{-2} \left\{ 1 + x_{-L}^{-2} \right\} \cdots \right\} \right\} = \sum_{n=-1}^{-L} \frac{\varphi_{\tilde V}^{L,k} (n)^2}{\varphi_{\tilde V}^{L,k} (0)^2}.
$$
Similarly, it follows that
$$
x_1^{-2} \left\{ 1 + x_2^{-2} \left\{ 1 + \cdots x_{L-1}^{-2} \left\{ 1 + x_L^{-2} \right\} \cdots \right\} \right\} = \sum_{n=1}^{L} \frac{\varphi_{\tilde V}^{L,k} (n)^2}{\varphi_{\tilde V}^{L,k} (0)^2}.
$$
Thus,
\begin{align*}
\det J & = 1 + x_1^{-2} \left\{ 1 + x_2^{-2} \left\{ 1 + \cdots x_{L-1}^{-2} \left\{ 1 + x_L^{-2} \right\} \cdots \right\} \right\} \\
& \qquad  + x_{-1}^{-2} \left\{ 1 + x_{-2}^{-2} \left\{ 1 + \cdots x_{-L+1}^{-2} \left\{ 1 + x_{-L}^{-2} \right\} \cdots \right\} \right\} \\
& \frac{\varphi_{\tilde V}^{L,k} (0)^2}{\varphi_{\tilde V}^{L,k} (0)^2} + \sum_{n=-1}^{-L} \frac{\varphi_{\tilde V}^{L,k} (n)^2}{\varphi_{\tilde V}^{L,k} (0)^2} + \sum_{n=1}^{L} \frac{\varphi_{\tilde V}^{L,k} (n)^2}{\varphi_{\tilde V}^{L,k} (0)^2} \\
& = \frac{\|\varphi_{\tilde V}^{L,k}\|^2}{\varphi_{\tilde V}^{L,k} (0)^2} \\
& = \varphi_{\tilde V}^{L,k} (0)^{-2},
\end{align*}
since $\varphi_{\tilde V}^{L,k}$ is a normalized eigenvector. This proves \eqref{e.ks9}.

We also note the following,
\begin{equation}\label{e.ks10}
\left| \varphi_{\tilde V}^{L,k} (m) \right| \left| \varphi_{\tilde V}^{L,k} (0) \right|^{-1} = \left| x_1^{-1} \cdots x_m^{-1} \right|.
\end{equation}

We are now ready to carry out the substitution \eqref{e.ks7} in the formula \eqref{e.ks8} for $\varrho_L(m,0)$, using the identities \eqref{e.ks9} and \eqref{e.ks10}:
\begin{align*}
\varrho_L(m,0) & = \int \cdots \int \left( \sum_k \left| \left\langle \delta_m , \varphi_{\tilde V}^{L,k} \right\rangle \right| \, \left| \left\langle \delta_0 , \varphi_{\tilde V}^{L,k} \right\rangle \right| \right) \prod_{n = -L}^L r(V_n) \, dV_{-L} \cdots dV_L \\
& = \sum_k \int \cdots \int \left| \varphi_{\tilde V}^{L,k}(m) \right| \, \left| \varphi_{\tilde V}^{L,k} (0) \right| \prod_{n = -L}^L r(V_n) \, dV_{-L} \cdots dV_L \\
& = \sum_k \int \cdots \int \left| \varphi_{\tilde V}^{L,k}(m) \right| \, \left| \varphi_{\tilde V}^{L,k} (0) \right|^{-1} \prod_{n = -L}^L r(V_n) \left| \varphi_{\tilde V}^{L,k} (0) \right|^2 \, dV_{-L} \cdots dV_L \\
& = \int_{\Sigma_0} \int_{\R^{2L}} \left| x_1^{-1} \cdots x_m^{-1} \right| \left( \prod_{n=-1}^{-L} r (E - x_{n-1}^{-1} - x_n) \right) r(E - x_1^{-1} - x_{-1}^{-1}) \\
& \qquad \times \left( \prod_{n=1}^{L} r (E - x_{n+1}^{-1} - x_n) \right) \, dx_{-L} \cdots dx_{-1} \, dx_1 \cdots dx_L \, dE.
\end{align*}

For fixed $E \in \Sigma_0$, it follows from the definitions that the inner integral has the required form:
\begin{align*}
& \left\langle T_1^{m-1} T_0^{L-m} \phi , U T_0^L \phi \right\rangle_{L^2(\R)} = \int_{\R^{2L}} \left| x_1^{-1} \cdots x_m^{-1} \right| \left( \prod_{n=-1}^{-L} r (E - x_{n-1}^{-1} - x_n) \right) \\
& \qquad \qquad \times r(E - x_1^{-1} - x_{-1}^{-1}) \left( \prod_{n=1}^{L} r (E - x_{n+1}^{-1} - x_n) \right) \, dx_{-L} \cdots dx_{-1} \, dx_1 \cdots dx_L
\end{align*}
The formula for $\varrho_L(m,0)$ claimed in the lemma therefore follows.
\end{proof}

In the following lemma, we denote the norm of an operator $T : L^p(\R) \to L^q(\R)$ by $\|T\|_{p,q}$.

\begin{lemma}\label{l.ks4}
{\rm (a)} $\|T_0\|_{1,1} \le 1$. \\
{\rm (b)} $\sup \{ \|T_0\|_{1,2} : E \in \Sigma_0 \} \le \|r\|_\infty^{1/2} < \infty$. \\
{\rm (c)} $\|T_1\|_{2,2} \le 1$. \\
{\rm (d)} $T_1^2$ is compact. \\
{\rm (e)} $\sup \{ \|T_1^2\|_{2,2} : E \in \Sigma_0 \} < 1$.
\end{lemma}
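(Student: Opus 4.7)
The plan rests on a factorization that reduces all five claims to statements about convolution with the probability density $r$. Setting $(Sg)(x) := \int r(E - x - z)\, g(z)\, dz = (r * g)(E - x)$, the substitution $y = 1/z$ shows $T_1 = S U$, so $T_1 f = (r * U f)(E - \cdot)$. Since $U$ is an isometric involution on $L^2(\R)$ (check by substitution), all $L^2$-analysis of $T_1$ reduces to convolution against a probability density.

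Parts (a)--(c) then each become one-line applications of standard convolution inequalities. For (a), I would invoke Fubini and $\int r(E - x - y^{-1})\, dx = \|r\|_1 = 1$ for each $y$. For (b), Minkowski's integral inequality gives $\|T_0 f\|_2 \le \|r\|_2 \|f\|_1$, and $\|r\|_2^2 \le \|r\|_\infty \|r\|_1 = \|r\|_\infty$ finishes. For (c), Young's inequality yields $\|T_1 f\|_2 = \|r * Uf\|_2 \le \|r\|_1 \|Uf\|_2 = \|f\|_2$.

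Part (d) is where the real work lies. I plan to verify the Riesz--Fr\'echet--Kolmogorov precompactness criterion for $T_1^2$ on the unit ball. Setting $\psi := U T_1 f$ and $g := T_1^2 f = (r * \psi)(E - \cdot)$, $L^2$-equicontinuity follows from $\|\tau_h g - g\|_2 \le \|\tau_h r - r\|_1 \|\psi\|_2$ together with $\|\psi\|_2 \le 1$ and $L^1$-continuity of translations. The hard step is uniform tail decay of $g$: here I would use that $r$ is supported in $[-M,M]$ to reduce $\|g\|_{L^2(|x|>R)}^2$ to $\|\psi\|_{L^2(|z|>R - |E| - M)}^2$, then apply the inversion $w = 1/z$ to rewrite this as an integral of $|T_1 f|^2$ over a shrinking neighborhood of $0$. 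Since $T_1 f \in L^\infty$ with $\|T_1 f\|_\infty \le \|r\|_2 \|f\|_2 \le \|r\|_2$ (by Cauchy--Schwarz in the $L^1 * L^2 \to L^\infty$ convolution estimate), that integral is $O(1/R)$ uniformly for $\|f\|_2 \le 1$. This inversion-mediated swap of small and large scales, built into $U$ and reflecting the asymmetric weight $|y|^{-1}$ in $T_1$, is the main obstacle in the lemma.

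Part (e) combines (d) with a rigidity argument. First, $\|T_1^{(E_1)} - T_1^{(E_2)}\|_{\mathrm{op}} \le \|\tau_{E_1 - E_2} r - r\|_1$ via the same factorization, so $E \mapsto \|T_1^{(E)2}\|_{2,2}$ is continuous and attains its supremum at some $E^\ast \in \Sigma_0$. By part (d), $T_1^{(E^\ast)2}$ is compact, so its operator norm is realized at some unit vector $f_0$. Suppose for contradiction that this norm equals $1$; then $\|T_1 f_0\|_2 = \|f_0\|_2 = 1$, and Plancherel applied to $T_1 f_0 = (r * U f_0)(E^\ast - \cdot)$ forces
\[
\int \bigl(1 - |\hat r(\xi)|^2\bigr)\, |\widehat{U f_0}(\xi)|^2 \, d\xi = 0.
\]
Because $r$ is a probability density whose essential support has positive Lebesgue measure, the equality case in $|\hat r(\xi)| \le \|r\|_1 = 1$ holds only at $\xi = 0$, so $\widehat{U f_0}$ vanishes a.e., yielding $f_0 = 0$, a contradiction.
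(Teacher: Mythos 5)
Your proposal is correct, and for the two substantive parts, (d) and (e), it takes a genuinely different route from the paper. Parts (a)--(c) coincide in substance with the paper's proof: the same factorization $T_1 = (r \ast \,\cdot\,)(E - \cdot) \circ U$ with $U$ a unitary involution underlies both arguments (the paper writes $T_1 = K\tilde U$ and uses Plancherel where you use Young; for (b) the paper estimates $\|T_0f\|_2^2 \le \|r\|_\infty \|f\|_1 \|T_0 f\|_1$ directly rather than via Minkowski, but these are cosmetic differences). For (d), the paper conjugates by the Fourier transform, splits $\tilde U$ with a smooth cutoff near the origin, and exhibits $\hat K \hat U \hat K$ as a Hilbert--Schmidt operator by estimating its integral kernel; your Kolmogorov--Riesz argument replaces this kernel computation with the equicontinuity bound $\|\tau_h g - g\|_2 \le \|\tau_h r - r\|_1$ and the tail estimate obtained by pushing the large-$|z|$ tail of $UT_1f$ through the inversion onto a small neighborhood of $0$, where the uniform bound $\|T_1 f\|_\infty \le \|r\|_2$ closes the loop --- you correctly identify that this is exactly why two factors of $T_1$ are needed, which is the same structural reason the paper needs the sandwich $\hat K \hat U \hat K$. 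Your version is more elementary (no Fourier kernel estimates) at the cost of invoking the precompactness criterion. For (e), the two proofs diverge more meaningfully: the paper deduces $\|T_1^2\|<1$ for each $E$ from the strict inequality $|\widehat{r_E}(k)|<1$ for $k \ne 0$, and then disposes of the supremum over $E$ by asserting that $\|T_1^2\|$ is independent of $E$ because $|\widehat{r_E}| = |\widehat{r_0}|$ --- an assertion that is not immediate for the product $\hat K\hat U\hat K\hat U$, since the phase $e^{-iEk}$ does not commute with $\hat U$. Your argument sidesteps this entirely: the Lipschitz bound $\|T_1^{(E_1)} - T_1^{(E_2)}\| \le \|\tau_{E_1-E_2}r - r\|_1$ gives continuity in $E$, compactness of $\Sigma_0$ gives attainment of the supremum, and the norm-attaining vector of the compact operator feeds into the same Plancherel rigidity ($|\hat r(\xi)|=1$ only at $\xi=0$ for an absolutely continuous probability density). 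This is arguably a cleaner and more robust way to get uniformity over the energy window, and it buys the conclusion without the $E$-independence claim.
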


\begin{proof}
(a) For $f \in L^1(\R)$, we have
\begin{align*}
\|T_0 f\|_1 & = \int |T_0f(x)| \, dx \\
& = \int \left| \int r(E - x - y^{-1}) f(y) \, dy \right| \, dx \\
& \le \iint | r(E - x - y^{-1}) | \, | f(y) | \, dy \, dx \\
& = \int \left( \int r(E - x - y^{-1}) \, dx \right) | f(y) | \, dy \\
& = \int \left( \int r(\tilde x) \, d\tilde x \right) | f(y) | \, dy \\
& = \int | f(y) | \, dy \\
& = \|f\|_1.
\end{align*}
Here we used that $r$ is the (non-negative) density of an absolutely continuous probability measure.

(b) For $f \in L^1(\R)$, we have
\begin{align*}
\|T_0 f\|_2^2 & = \int |T_0f(x)|^2 \, dx \\
& = \int \left| \int r(E - x - y^{-1}) f(y) \, dy \right|^2 \, dx \\
& \le \|r\|_\infty \|f\|_1 \iint | r(E - x - y^{-1}) | \, | f(y) | \, dy \, dx \\
& = \|r\|_\infty \|f\|_1^2.
\end{align*}
Here we use an identity from the proof of (a) in the last step.

(c) We have
\begin{align*}
T_1 f(x) & = \int r(E - x - y^{-1}) |y|^{-1} f(y) \, dy \\
& = \int r(E - x - y^{-1}) |y| f(y) |y|^{-2} \, dy \\
& = \int r(E - x + \tilde y) |\tilde y|^{-1} f(- (\tilde y)^{-1}) \, d\tilde y.
\end{align*}
Here we used the substitution $y = -(\tilde y)^{-1}$. Thus, we can write $T_1 = K \tilde U$, where
$$
\tilde Uf(x) = |x|^{-1} f(-x^{-1})
$$
and
$$
Kf(x) = \int r(E - x + y) f(y) \, dy.
$$
Since we can write $Kf = r_E \ast f$ with $r_E(x) = r(E - x)$, it follows that
\begin{align*}
\|T_1 f\|_2 & = \|K \tilde U f\|_2 \\
& = \|r_E \ast \tilde U f\|_2 \\
& = \|\widehat{r_E \ast \tilde U f}\|_2 \\
& = \|\widehat{r_E} \cdot \widehat{\tilde U f}\|_2 \\
& \le \| \widehat{r_E} \|_\infty \cdot \| \widehat{\tilde U f}\|_2 \\
& = \|r_E \|_1 \cdot \| \tilde U f\|_2 \\
& = \|f\|_2.
\end{align*}
In the last step, we used that
$$
\| \tilde U f\|_2^2 = \int |x|^{-2} |f(-x^{-1})|^2 \, dx = \int |f(\tilde x)|^2 \, d\tilde x = \|f\|_2^2.
$$

(d) Denote the Fourier transform by $F$ and define $\hat K = FKF^{-1}$, $\hat U = F \tilde U F^{-1}$. Then,
$$
T_1^2 = (K \tilde U)^2 = K \tilde U K \tilde U = F^{-1} \hat K \hat U \hat K \hat U F.
$$
Thus, to show that $T_1^2$ is compact, it suffices to show that $\hat K \hat U \hat K$ is compact. To do so, we will show that this operator has an $L^2$ integral kernel and hence is Hilbert-Schmidt.

Let $g_1 \in C_0^\infty(\R)$ which is $\equiv 1$ in a neighborhood of $0$ and let $g_2 = 1 - g_1$. Set
$$
U_i f(x) = g_i (x) \cdot \tilde U f(x) , \quad i = 1,2.
$$
In particular, $\tilde U f = U_1 f + U_2 f$.

We have
\begin{align*}
\widehat{U_1 f} (k) & = \frac{1}{\sqrt{2\pi}} \int e^{-ikx} U_1 f(x) \, dx \\
& = \frac{1}{\sqrt{2\pi}} \int e^{-ikx} g_1 (x) |x|^{-1} f(-x^{-1}) \, dx \\
& = \frac{1}{\sqrt{2\pi}} \int e^{-ik\tilde x^{-1}} g_1 (\tilde x^{-1}) |\tilde x|^{-1} f(-\tilde x) \, d\tilde x \\
& = \frac{1}{2\pi} \int e^{-ik\tilde x^{-1}} g_1 (\tilde x^{-1}) |\tilde x|^{-1} \left( \int e^{-ip\tilde x} \hat f(p) \, dp \right) \, d\tilde x \\
& = \frac{1}{2\pi} \int \left( \int e^{-ik\tilde x^{-1}-ip\tilde x} g_1 (\tilde x^{-1}) |\tilde x|^{-1} \, d\tilde x \right) \hat f(p) \, dp \\
& = \frac{1}{2\pi} \int \left( \int e^{-ikx-ipx^{-1}} g_1 (x) |x|^{-1} \, dx \right) \hat f(p) \, dp \\
& =: \frac{1}{2\pi} \int a_1(k,p) \hat f(p) \, dp
\end{align*}

Similarly,
$$
\widehat{U_2 f} (k) = \frac{1}{2\pi} \int a_2(k,p) \hat f(p) \, dp
$$
with
$$
a_2(k,p) = \int e^{-ikx-ipx^{-1}} g_2 (x) |x|^{-1} \, dx.
$$

The integral kernel of $\hat K \hat U \hat K$ is therefore given by
$$
b(k,p) = \hat r_E(k) a_1 (k,p) \hat r_E(p) + \hat r_E(k) a_2 (k,p) \hat r_E(p),
$$
so that
\begin{align*}
\| b \|_{L^2(\R,dk) \times L^2(\R,dp)} & \le \| \hat r \|_{L^2(\R,dk)} \sup_k \|a_1 (k,\cdot)\|_{L^2(\R,dp)} \|\hat r\|_{L^\infty(\R,dp)} \\
& + \| \hat r \|_{L^\infty(\R,dk)} \sup_p \|a_2 (\cdot,p)\|_{L^2(\R,dp)} \|\hat r\|_{L^2(\R,dp)}.
\end{align*}

Note that $r \in L^1(\R) \cap L^\infty(\R)$ implies that $r \in L^2 (\R) \cap L^1(\R)$, and therefore $\hat r \in L^2(\R) \cap L^\infty(\R)$. Thus, it remains to show that
\begin{equation}\label{e.ks3}
\sup_k \|a_1 (k,\cdot)\|_{L^2(\R,dp)} < \infty
\end{equation}
and
\begin{equation}\label{e.ks4}
\sup_p \|a_2 (\cdot,p)\|_{L^2(\R,dp)} < \infty.
\end{equation}

For fixed $k$, $a_1(k,\cdot)$ has a potential problem near $0$, but
$$
\int_{|x| > \frac{1}{N}} e^{-i(kx + px^{-1})} \frac{g_1(x)}{|x|} \, dx = \int_{|\tilde x| < N} e^{-i(k\tilde x^{-1}+p\tilde x)} \frac{g_1(\tilde x^{-1})}{|\tilde x|} \, d\tilde x,
$$
which converges, as $N \to \infty$, in $L^2$-sense to the Fourier transform of the $L^2$-function $e^{-i(k\tilde x^{-1}+p\tilde x)} \frac{g_1(\tilde x^{-1})}{|\tilde x|}$, whose $L^2$-norm is independent of $k$. Thus, \eqref{e.ks3} follows.

For fixed $p$, $a_2(\cdot,p)$ is by definition the Fourier transform of the function
$$
f^{(p)}(x) = \sqrt{2\pi} \, e^{-ipx^{-1}} \frac{g_2(x)}{|x|}.
$$
Since $g_2$ vanishes near $0$, this is an $L^2$ function. Obviously, its $L^2$ norm is independent of $p$. By unitarity of the Fourier transform, \eqref{e.ks4} follows.

(e) Since $T_1^2$ is compact, so is $|T_1^2| = ((T_1^2)^* T_1^2)^{1/2}$. Since $|T_1^2|$ is also positive, $\|T_1^2\|$ is an eigenvalue of $|T_1^2|$. On the other hand, recall from the proof of (c) that $\|T_1 f\|_2 = \|\widehat{r_E} \cdot \widehat{\tilde U f}\|_2$. Since $|\widehat{r_E}(k)| < 1$ for $k \not= 0$, we have that $\|T_1 f\|_2 < \|\widehat{\tilde U f}\|_2 = \|f\|_2$ for every non-zero $f \in L^2(\R)$. Thus, $1$ is not an eigenvalue of $|T_1^2|$. Thus, $\|T_1^2\| \not= 1$. By (c), we therefore have $\|T_1^2\| < 1$. Finally, $\|T_1^2\|$ is independent of $E$ since $\widehat{r_E}(k) = e^{-iEk} \widehat{r_0}(k)$ and hence multiplication by $\widehat{r_E}$ has norm independent of $E$.
\end{proof}

\begin{proof}[Proof of Theorem~\ref{t.ksd}.]
Since $\mu$ is $T$-invariant, it suffices to prove \eqref{e.ks1} for $m \ge 0$ and $n = 0$. We have
\begin{align*}
\int_\Omega & \left( \sup_{t \in \R} \left| \left\langle \delta_m , e^{-itH_\omega} \delta_0 \right\rangle \right| \right) \, d\mu(\omega) \\
& = a(m,0) \\
& \le \limsup_{L \to \infty} a_L(m,0) \\
& \le \limsup_{L \to \infty} \varrho_L(m,0) \\
& = \limsup_{L \to \infty} \int_{\Sigma_0} \left\langle T_1^{m-1} T_0^{L-m} \phi , U T_0^L \phi \right\rangle_{L^2(\R)} \, dE \\
& \le \limsup_{L \to \infty} \int_{\Sigma_0} \left\| T_1^{m-1} T_0^{L-m} \phi \right\|_2 \left\| T_0^L \phi \right\|_2 \, dE \\
& \le \limsup_{L \to \infty} \int_{\Sigma_0} \| T_1^{m-1} \|_{2,2} \, \|T_0\|_{1,2} \, \|T_0^{L-m-1}\|_{1,1} \, \|\phi\|_1 \cdot \|T_0\|_{1,2} \, \| T_0^{L-1} \|_{1,1} \, \|\phi \|_1 \, dE \\
& \le \sup \{ \|T_1^2\|_{2,2} : E \in \Sigma_0 \}^{\frac{m-2}{2}} \cdot \|r\|_\infty \cdot \mathrm{Leb}(\Sigma_0).
\end{align*}
The first step is just the definition of $a(m,0)$, the second step follows from Lemma~\ref{l.ks1}, the third step follows from Lemma~\ref{l.ks2}, the fourth step follows from Lemma~\ref{l.ks3}, the fifth step follows from Cauchy-Schwarz and unitarity of $U$, the sixth step is obvious, the seventh step follows from Lemma~\ref{l.ks4} and $\|\phi\|_1 = 1$. Since $\sup \{ \|T_1^2\|_{2,2} : E \in \Sigma_0 \} < 1$ by Lemma~\ref{l.ks4}, the theorem follows.
\end{proof}

We mentioned earlier that dynamical localization implies spectral localization. Here is how to derive a spectral localization result from the dynamical localization result contained in Theorem~\ref{t.ksd}.

\begin{prop}
If there are constants $C, \gamma \in (0,\infty)$ such that
$$
\max_{n \in \{0,1\}} a(m,n) \le C e^{-\gamma |m|},
$$
with $a(m,n)$ as in \eqref{e.amndef}, then for $\mu$-almost every $\omega \in \Omega$, $H_\omega$ has pure point spectrum with exponentially decaying eigenvectors. More precisely, these eigenvectors obey estimates of the form
$$
|u(n)| \le C_{\omega,\varepsilon,u} e^{-(\gamma - \varepsilon)|n|}
$$
for small $\varepsilon > 0$.
\end{prop}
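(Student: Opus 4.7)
The strategy unfolds in three linked stages. Stage one passes from the ensemble bound on $a(m,n)$ to a pointwise-in-$\omega$ estimate on $b_\omega(m,n) := \sup_{t} |\langle \delta_m, e^{-itH_\omega}\delta_n\rangle|$, losing an arbitrarily small slice of the exponent: fix $\varepsilon \in (0,\gamma)$ and $n \in \{0,1\}$, and note that
\[
\int_\Omega \sum_{m \in \Z} e^{(\gamma - \varepsilon)|m|} b_\omega(m,n) \, d\mu(\omega) = \sum_{m \in \Z} e^{(\gamma - \varepsilon)|m|} a(m,n) \le C \sum_m e^{-\varepsilon|m|} < \infty
\]
by Tonelli and the hypothesis. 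For $\mu$-a.e.\ $\omega$ the inner sum is finite, so there is $C_\omega(\varepsilon) < \infty$ with $b_\omega(m,n) \le C_\omega(\varepsilon) e^{-(\gamma-\varepsilon)|m|}$ for all $m$. A countable intersection over $\varepsilon \in \{1/k\}_{k \ge 1}$ and $n \in \{0,1\}$ yields a full-measure $\Omega_* \subseteq \Omega$ on which this estimate holds for every positive $\varepsilon$ and both $n \in \{0,1\}$.

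Stage two derives pure point spectrum via RAGE. For $\omega \in \Omega_*$ and $P_N := \chi_{[-N,N]}$,
\[
\sup_t \|(I - P_N) e^{-itH_\omega}\delta_n\|^2 \le \sum_{|m|>N} b_\omega(m,n)^2 \le C_\omega(\varepsilon)^2 \sum_{|m|>N} e^{-2(\gamma-\varepsilon)|m|} \longrightarrow 0
\]
as $N \to \infty$, which is exactly the standard RAGE-type criterion for $\delta_n \in \mathcal{H}_{\mathrm{pp}}(H_\omega)$. Since the three-term recurrence makes $\{\delta_0,\delta_1\}$ cyclic for $H_\omega$ (iteratively, each $\delta_{\pm k}$ lies in $\mathrm{span}\{H_\omega^j \delta_0, H_\omega^j \delta_1\}_{0 \le j \le k}$), it follows that $\mathcal{H}_{\mathrm{pp}}(H_\omega) = \ell^2(\Z)$ and $H_\omega$ has pure point spectrum.

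Stage three extracts eigenvector decay via Wiener's theorem and the same cyclicity. For an eigenvalue $E$ with spectral projection $P_E$, Wiener's theorem applied to the spectral measure $\langle \delta_m, \cdot\, \delta_n\rangle$ gives
\[
|\langle \delta_m, P_E \delta_n\rangle|^2 \le \sum_{E'} |\langle \delta_m, P_{E'} \delta_n\rangle|^2 \le b_\omega(m,n)^2,
\]
so $|\langle \delta_m, P_E \delta_n\rangle| \le C_\omega(\varepsilon) e^{-(\gamma-\varepsilon)|m|}$ for $n \in \{0,1\}$. Because eigenfunctions of the second-order recurrence are determined by two consecutive values, $\mathrm{rank}\, P_E \le 2$, and cyclicity forces $\mathrm{Range}(P_E) = \mathrm{span}\{P_E \delta_0, P_E \delta_1\}$. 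Every eigenvector $u$ with $H_\omega u = E u$ thus has the form $u = \alpha P_E \delta_0 + \beta P_E \delta_1$ for some $\alpha, \beta \in \C$, which yields
\[
|u(m)| \le (|\alpha| + |\beta|)\, C_\omega(\varepsilon)\, e^{-(\gamma-\varepsilon)|m|} =: C_{\omega,\varepsilon,u}\, e^{-(\gamma-\varepsilon)|m|}.
\]

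The step I anticipate as hardest is the representation in stage three: \emph{every} eigenvector, including those corresponding to potentially two-fold degenerate eigenvalues, must be captured by the exponential bound, and this rests on the observation that the cyclic pair $\{P_E\delta_0, P_E\delta_1\}$ already spans the (at most two-dimensional) eigenspace. The $\varepsilon$-loss in the exponent, by contrast, is forced at the Fubini step, since the geometric weight $e^{\gamma|m|}$ would make the ensemble sum divergent.
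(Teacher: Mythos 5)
Your proof is correct and follows essentially the same route as the paper's: an almost-sure pointwise bound on $\sup_t|\langle\delta_m,e^{-itH_\omega}\delta_n\rangle|$ with an $\varepsilon$-loss in the exponent, RAGE plus cyclicity of $\{\delta_0,\delta_1\}$ for pure point spectrum, and a time-averaging (Wiener) bound on $\langle\delta_m,P_E\delta_n\rangle$ to control eigenvectors. The only cosmetic differences are that the paper uses Chebyshev and Borel--Cantelli where you use Tonelli, and it invokes simplicity of eigenvalues to write $u$ in terms of $\chi_{\{E\}}(H_\omega)\delta_0$ directly, whereas you allow a priori two-dimensional eigenspaces spanned by $P_E\delta_0,P_E\delta_1$ --- both handle the case $u(0)=0$ correctly.
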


\begin{proof}
Let
$$
a(m,n; \omega) = \sup_{t \in \R} \left| \left\langle \delta_m , e^{-itH_\omega} \delta_n \right\rangle \right|,
$$
so that $a(m,n) = \int a(m,n;\omega) \, d\mu(\omega)$. By the Chebyshev inequality, we have for $\varepsilon > 0$,
\begin{equation}\label{e.ks11}
\mu \left( \omega : a(m,n;\omega) > e^{-(\gamma-\varepsilon)|m|} \right) \le e^{(\gamma - \varepsilon)|m|} a(m,n).
\end{equation}
By assumption, for $n \in \{ 0,1 \}$, $a(m,n) \le C e^{-\gamma |m|}$ and hence the left-hand side of \eqref{e.ks11} is summable, so that by Borel-Cantelli, we have
$$
\mu \left( \omega : a(m,n;\omega) > e^{-(\gamma-\varepsilon)|m|} \text{ for infinitely many } n \right) = 0
$$
for $n \in \{0,1\}$.

Let us consider the full-measure set of $\omega$'s for which $a(m,n;\omega) \le \tilde C_{\omega,\varepsilon} e^{-(\gamma - \varepsilon)|m|}$ for $n \in \{ 0,1 \}$ and all $m \in \Z$. By the RAGE Theorem and cyclicity of $\{ \delta_0 , \delta_1 \}$, $H_\omega$ has pure point spectrum for such $\omega$'s.

To prove exponential decay of the corresponding eigenvectors, consider the functions
$$
f_T(x) = \frac{1}{T} \int_0^T e^{isE} e^{-isx} \, ds.
$$
We have $|f_T(x)| \le 1$ and $f_T(x) \to 0$ (resp., $1$) as $T \to \infty$ for $x \not= E$ (resp., $x = E$). Thus, by the functional calculus,
\begin{equation}\label{e.ks12}
\chi_{\{E\}}(H_\omega) = \mathrm{s}-\lim_{T \to \infty} \frac{1}{T} \int_0^T e^{isE} e^{-isH_\omega} \, ds.
\end{equation}
Since in one dimension all eigenvalues of Schr\"odinger operators are simple, the normalized eigenvector $u_{\omega,E}$ corresponding to the eigenvalue $E$ of $H_\omega$ obeys
\begin{align*}
|u_{\omega,E}(m)| & = |u_{\omega,E}(0)|^{-1} | \langle \delta_0 , u_{\omega,E} \rangle \langle \delta_m , u_{\omega,E} \rangle | \\
& = |u_{\omega,E}(0)|^{-1} | \langle \delta_m , \chi_{\{E\}} (H_\omega) \delta_0 \rangle | \\
& \le |u_{\omega,E}(0)|^{-1} a(m,0;\omega) \\
& \le C_{\omega,\varepsilon,u} e^{-(\gamma - \varepsilon)|m|},
\end{align*}
provided that $u_{\omega,E}(0) \not= 0$. We used \eqref{e.ks12} in the third step and our choice of $\omega$ in the fourth step. In the case where $u_{\omega,E}(0) = 0$, we must have $u_{\omega,E}(1) \not= 0$ since $u_{\omega,E}$ is a non-trivial solution of \eqref{e.eved}, and we repeat the steps above with $1$ in place of $0$.
\end{proof}

\subsection{The Continuum Case}\label{ss.ksc}

See \cite{ds, r}.

\end{document}